\newcommand{\face}{\mathrel{\unlhd}}
\newcommand{\jProd}[2]{ {#1 \circ #2 } }	
\newcommand{\SOC}[2]{{\mathcal{L}^{#2} _{#1}}}
\newcommand{\reInt}{\mathrm{ri}\,}
\newcommand{\spanVec}{\mathrm{span}\,}
\newcommand{\inProd}[2]{\langle #1 , #2 \rangle }
\newcommand{\PSDcone}[1]{{\mathcal{S}^{#1}_+}}
\newcommand{\stdMap}{ {\mathcal{A}}}
\newcommand{\stdCone}{ {\mathcal{K}}}
\newcommand{\stdSpace}{ \mathcal{L}}
\newcommand{\stdFace}{F}
\newcommand{\stdInt}{ {e}}
\newcommand{\matRange}{{\mathrm{ range } \,}}
\renewcommand{\Re}{\mathbb{R}}    
\renewcommand{\S}{\mathcal{S}}
\newcommand{\jAlg}{\mathcal{E}}
\newcommand{\T}{*}
\DeclarePairedDelimiter\abs{\lvert}{\rvert}%
\newcommand{\rank}{\operatorname{rank}}
\newcommand{\mult}{\operatorname{mult}}
\newcommand{\Aut}{\operatorname{Aut}}
\newcommand{\Diag}{\operatorname{Diag}}
\newcommand{\GL}{\operatorname{GL}}
\newtheorem{definition}{Definition}[section]
\newtheorem{lemma}[definition]{Lemma}
\newtheorem{proposition}[definition]{Proposition}
\newtheorem{example}[definition]{Example}
\newtheorem{corollary}[definition]{Corollary}
\newtheorem{theorem}[definition]{Theorem}
\newtheorem*{proposition*}{Proposition}
\theoremstyle{remark}
\newtheorem{remark}[definition]{Remark}
\numberwithin{equation}{section}
\title{Automorphisms of rank-one generated hyperbolicity cones and their derivative relaxations}
\author{
	Masaru Ito		\thanks{Department of Mathematics, College of Science and Technology, Nihon University,
		1-8-14 Kanda-Surugadai, Chiyoda-Ku, Tokyo 101-8308, Japan. This author was supported partly by the JSPS Grant-in-Aid for Early-Career Scientists  21K17711.
		Email: \href{ito.masaru@nihon-u.ac.jp}{ito.masaru@nihon-u.ac.jp}.} \and
	Bruno F. Louren\c{c}o\thanks{Department of Statistical Inference and Mathematics, Institute of Statistical Mathematics, Japan.
		This author was supported partly by the JSPS Grant-in-Aid for Early-Career Scientists  19K20217 and the Grant-in-Aid for Scientific Research (B)21H03398.
		Email: \href{bruno@ism.ac.jp}{bruno@ism.ac.jp}.}
}
\begin{document}
\maketitle
\begin{abstract}
	A hyperbolicity cone is said to be rank-one generated (ROG) if all its extreme rays have rank one, where the rank is computed with respect to the underlying hyperbolic polynomial. 
	This is a natural class of hyperbolicity cones which are strictly more general than the ROG spectrahedral cones. 
	In this work, we present a study of the automorphisms of  ROG hyperbolicity cones and their derivative relaxations. One of our main results states that the automorphisms of the derivative relaxations  are exactly the automorphisms of the original cone fixing a certain direction. 
As an application, we completely determine the automorphisms of the derivative relaxations of the nonnegative orthant and of the cone of positive semidefinite matrices. More generally, we also prove relations between the automorphisms of a spectral cone and the underlying permutation-invariant set, which might be of independent interest.
\end{abstract}
{\bfseries Keywords:}
hyperbolic polynomial, hyperbolicity cone, automorphism group

\section{Introduction}\label{sec:int}	
In this work, our goal is to prove several results on the automorphism group of  a particular class of hyperbolicity cones  and their derivative relaxations. We start with some general observations about the importance of automorphism groups of cones. An initial motivation is that it provides deep insight into the properties of the cone. As a concrete example, we can consider the notion of \emph{Lyapunov rank} (also known as the \emph{bilinearity rank}) \cite{RNPA11,GT14,GT14_2,OG16,Or22} of a pointed full-dimensional closed convex cone $\stdCone \subseteq \Re^n$, which is the dimension of the Lie algebra of the automorphism group of $\stdCone$ and is denoted by $\beta(\stdCone)$. If $\beta(\stdCone) \geq n$ holds, then $\stdCone$ is said to be a perfect cone \cite[Theorem~1]{OG16} and a complementarity condition such as 
``$0 = \inProd{x}{y}$, $x \in \stdCone, y \in \stdCone^*$'' (here, $\stdCone^*$ is the dual cone of $\stdCone$) can be rewritten as a square system of equations \cite[Proposition~2]{GT14_2}\footnote{One example of this phenomenon is how the conditions $\inProd{x}{y} = 0, x \in \Re^n_+, y \in \Re^n_+$ imply $n$ equations $x_i y_i = 0$. }. This influences how easy (or how hard) it is to solve complementarity problems involving $\stdCone$.   
More generally, 
it can be shown that $L$ belongs to the Lie algebra of the automorphism group of $\stdCone$ if and only if the following condition holds, see \cite[pg.~157]{GT14_2}.
\[
x \in \stdCone, y \in \stdCone^*, \inProd{x}{y} = 0 \Rightarrow \inProd{L(x)}{y} = 0.
\]

Another interesting application of the study of automorphism groups is a classification of three-dimensional cones that have an automorphism group of dimension at least two \cite{Hi14}. A key step is the analysis of the action of a certain parametric subgroup on the elements of the cone, see \cite[pg.~501]{Hi14}.

There are also more practical concerns related to the efficient optimization over the underlying cone. When the problem data has some amount of symmetry, it might be possible to use \emph{symmetry-reduction} techniques in order to decrease the size of the problem or prove some favourable property, see   \cite{KOMK01,KP04,KS08,Va09} for examples in semidefinite programming and polynomial optimization. 
This is contingent, of course, on having a good grasp of the linear transformations that preserve the cone. 
The nonnegative orthant $\Re^n_+$ and the positive semidefinite cone $\PSDcone{n}$ are rich in automorphisms, this provides fertile ground for techniques that exploit the symmetries in problem data.

We now return to our subject matter: hyperbolic polynomials and hyperbolicity cones. We recall that a cone $\stdCone$ is said to be \emph{homogeneous} if its group of automorphisms acts transitively on the relative interior of $\stdCone$. Furthermore, a cone is said to be \emph{symmetric} if it is homogeneous and self-dual with respect to some inner product.
Typical examples of symmetric cones include the aforementioned $\Re^n_+$, $\PSDcone{n}$ but also the second-order cones, also known as Lorentz cones.
To the best of our knowledge, G\"uler \cite{Gu97} was the first to bring hyperbolic polynomials to the attention of optimizers in the form of hyperbolic programming. 
One of his motivations was to try to extend certain types of long-step interior point methods from symmetric cones to other classes of cones. Among many results, G\"uler proved that all homogeneous cones are hyperbolicity cones \cite[Section~8]{Gu97}.

From the conic optimization point of view, hyperbolicity cones are the next natural step after symmetric cones and homogeneous cones. And, of course, they have been subject of much recent research activity. There are deep questions in convex algebraic geometry related to hyperbolicity cones, such as the generalized Lax conjecture.
Related to that, for many years, one of the best results on the generalized Lax conjecture was the one proved by Chua (see the comment in \cite[p.~64]{Re06}): all homogeneous cones are spectrahedral  \cite{CH03}, see also \cite[Proposition 1 and Section 4]{FB02} by Faybusovich.

This kind of striking result is only possible thanks to powerful algebraic theories for homogeneous cones (T-Algebras \cite{V63}) and symmetric cones (Euclidean Jordan Algebras \cite{FK94}), which are viable because these cones have ``large'' automorphism groups. Indeed, the Lyapunov rank of a symmetric cone must be at least the dimension of the underlying space, see \cite[Theorem~5]{GT14_2}.

From this point of view, it seems natural to try to understand the automorphism group of hyperbolicity cones. We would like to understand how large can it be, what can it tell us about the structure of the underlying cone and so on.

Unfortunately, a significant hurdle in this enterprise is that  determining the automorphisms of any given mathematical  object is typically very hard. 
In order to appreciate the difficulty in this task, we may recall that every polyhedral cone can be realized as a hyperbolicity cone. 
Therefore, a general theory of automorphisms of hyperbolicity cones would need to contemplate the automorphisms of all polyhedral cones, so this is a non-starter.

Fortunately, we found a class of hyperbolicity cones that are quite suitable for the study of its automorphisms: the rank-one generated (ROG) hyperbolicity cones together with their derivative relaxations.

A (pointed) hyperbolicity cone is said to be ROG if all its extreme rays have rank one, when computed with respect to the underlying hyperbolic polynomial. We recall that a spectrahedral cone is said to be ROG if all its extreme rays are generated by matrices of rank $1$, e.g., see \cite{Hd16}. In particular, all  ROG spectrahedral cones are also ROG hyperbolicity cones if the hyperbolic polynomial is appropriately chosen, see Section~\ref{sec:rog_ex}. The 
ROG spectrahedral cones themselves are quite important because they are connected to whether SDP relaxations of certain quadratic programs are exact or not, see \cite{Hd16,AKW22}.
However, we note that ROG hyperbolicity  cones form a strictly larger class of cones, because the second-order cones in dimension $4$ or larger cannot be realized as ROG spectrahedral cones,  see Proposition~\ref{prop:socp_not_rog}.

Our main results are as follows.
\begin{itemize}
	\item We provide a detailed study of the properties and facial structure of ROG hyperbolicity cones. Given a regular ROG hyperbolicity cone $\Lambda_+$ of dimension at least three and generated by a polynomial $p$ along a direction $e$, we show in Theorem~\ref{theo:hyper_aut_2},  the  formula 
	\[
	\Aut(\Lambda_+^{(k)}) = \{A \in \Aut(\Lambda_+) \mid A(\Re_+e)=\Re_+e \},
	\]
	for $k$ satisfying $1 \leq k \leq \deg p -3$. That is, the automorphisms of the $k$-th derivative relaxation $\Lambda_+^{(k)}$ are precisely the automorphisms of $\Lambda_+$ that have the hyperbolic direction $e$ as an eigenvector.
	The proof requires both geometric and algebraic considerations and makes full use of G{\aa}rding's inequality.
	Surprisingly, Theorem~\ref{theo:hyper_aut_2} also admits a converse of sorts and, with some caveats, it is possible to show that 
	the automorphisms of $\Lambda_+$ must also be automorphisms of derivative relaxations of faces of $\Lambda_+$, see Theorem~\ref{theo:aut_converse}.
	
	\item With the aid of Theorem~\ref{theo:hyper_aut_2} we completely determine the automorphism groups of the derivative relaxations of $\PSDcone{n}$ and $\Re^n_+$ in Theorems~\ref{th:aut-orth} and \ref{theo:aut_snk}, respectively. The derivative relaxations of  $\PSDcone{n}$  and $\Re^n_+$ are related in the sense that the former is the \emph{spectral cone} generated by the latter. Related to that, we also prove a general result on the automorphisms of spectral cones, see Theorem~\ref{thm:spec_aut}.
	\item Finally, in Section~\ref{sec:non_hom}, we show some corollaries of our results. We compute the Lyapunov ranks of derivative relaxations of $\PSDcone{n}$  and $\Re^n_+$. We also discuss the non-homogeneity of derivative relaxations of ROG cones. 
\end{itemize}

This work is organized as follows. 
In Section~\ref{sec:prel}, we discuss the notation and some basic tools from convex analysis and hyperbolicity cones. In Section~\ref{sec:aut}, we discuss ROG hyperbolicity cones  and prove our main result on their automorphisms.
In Section~\ref{sec:app}, we discuss applications of our results. Finally, in Section~\ref{sec:conc} we conclude this work with some open questions.

\section{Preliminaries}\label{sec:prel}

Let $C \subseteq \Re^n$ be a convex set. We denote the relative interior, span and dimension of $C$  by $\reInt C$, $\spanVec C$,  and $\dim C$ respectively.

Next, let $\stdCone \subseteq \Re^n$ be a closed convex cone. We say that $\stdCone$ is \emph{pointed} if 
$\stdCone \cap - \stdCone = \{0\}$. If 
$\dim \stdCone = n$, then $\stdCone$ is said to be full-dimensional. If $\stdCone$ is pointed and full-dimensional we say that it is \emph{regular}.
A \emph{face} of $\stdCone \subseteq \Re^n$ is a convex cone $\stdFace \subseteq \stdCone$ such that every $x,y \in \stdCone$ with $x+y \in \stdFace$ satisfies $x,y \in \stdFace$. In this case, we write $\stdFace \face \stdCone$. 
If $\stdFace \neq \stdCone$, then we say that $\stdFace$ is a \emph{boundary face}.
A face $\stdFace$ is said to be \emph{exposed}, if it can be written as the intersection of $\stdCone$ with one of its supporting hyperplanes. An \emph{extreme ray} is a face of $\stdCone$ of dimension $1$. If $\stdFace = \{\alpha x \mid \alpha \geq 0\} \face \stdCone$ we say that $\stdFace$ is the extreme ray \emph{generated by $x$}.

We now recall some basic properties of faces, more details can be seen in \cite{Pa00}. 
A useful property is that for two faces $\stdFace_1 \face \stdCone$, $\stdFace_{2} \face \stdCone$, we have 
\begin{equation}\label{eq:faces_eq}
\stdFace_{1} = \stdFace_{2} \quad \Longleftrightarrow \quad (\reInt \stdFace _1) \cap (\reInt \stdFace_{2}) \neq \emptyset,
\end{equation}
see \cite[Corollary~18.1.2]{RT97}. Also, for every convex subset $S \subseteq \stdCone$, there exists a unique face $\stdFace \face \stdCone$ such that 
$S \subseteq \stdFace$ and $(\reInt S)\cap (\reInt \stdFace) \neq \emptyset$. This is called the \emph{minimal face of $\stdCone$ containing $S$} and will be denoted by $\stdFace(S)$. In particular, $\stdFace(S)$ is the intersection of all faces of $\stdCone$ containing $S$ and for a face $\hat \stdFace \face \stdCone$ we have
\begin{equation}\label{eq:fs}
\hat \stdFace = \stdFace(S)  \quad \Longleftrightarrow \quad (\reInt S) \cap (\reInt \hat \stdFace) \neq \emptyset \quad \Longleftrightarrow\quad \reInt S \subseteq \reInt \hat \stdFace,
\end{equation}
see \cite[Proposition~3.2.2]{Pa00} or it can also be inferred from the  results in \cite[Section~18]{RT97}.

If $S$ is the convex hull of finitely many points $\{x_1,\ldots, x_{r}\} \subseteq \stdCone$ we will simplify the notation and write 
$\stdFace(x_1,\ldots, x_r)$. The following well-known lemma will be useful and is
closely  related to  \cite[Proposition~3.3]{Bar73}.
\begin{lemma}\label{lem:min_face}
Let $\{x_1,\ldots, x_r\} \subseteq \stdCone$ be a finite subset of points of a closed convex cone $\stdCone$. Then 
\[
\stdFace(x_1,\ldots, x_r) = \stdFace(x_1+\cdots+x_r)
\]
\end{lemma}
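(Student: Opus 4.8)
The plan is to prove the two inclusions separately. The inclusion $\stdFace(x_1+\cdots+x_r) \subseteq \stdFace(x_1,\ldots,x_r)$ is the easy direction: since $x_1,\ldots,x_r \in \stdFace(x_1,\ldots,x_r)$ and a face is a convex cone, the sum $x_1+\cdots+x_r$ also lies in $\stdFace(x_1,\ldots,x_r)$; as $\stdFace(x_1+\cdots+x_r)$ is by definition the smallest face containing $x_1+\cdots+x_r$, it is contained in $\stdFace(x_1,\ldots,x_r)$.

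For the reverse inclusion, I would use the defining property of faces applied to the face $\stdFace := \stdFace(x_1+\cdots+x_r)$. We have $x_1+\cdots+x_r \in \stdFace$, and each $x_i \in \stdCone$. Writing $x_1 + \cdots + x_r$ as $x_1 + (x_2+\cdots+x_r)$ with both summands in $\stdCone$, the face property forces $x_1 \in \stdFace$ and $x_2+\cdots+x_r \in \stdFace$; iterating (or inducting on $r$) gives $x_i \in \stdFace$ for every $i$. Hence $\{x_1,\ldots,x_r\} \subseteq \stdFace$, and since $\stdFace$ is a face containing all the $x_i$, minimality of $\stdFace(x_1,\ldots,x_r)$ yields $\stdFace(x_1,\ldots,x_r) \subseteq \stdFace = \stdFace(x_1+\cdots+x_r)$.

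Combining the two inclusions gives the claimed equality. There is no real obstacle here; the only thing to be slightly careful about is the base/induction step for splitting the sum, and the implicit fact that the convex cone generated by $\{x_1,\ldots,x_r\}$ (equivalently, the convex hull used to define $\stdFace(x_1,\ldots,x_r)$) lies in any face containing the $x_i$ — which is immediate since faces are convex cones. One could alternatively phrase the argument via \eqref{eq:fs} by checking that a suitable relative interior point of $\conicHull\{x_1,\ldots,x_r\}$ lies in $\reInt \stdFace(x_1+\cdots+x_r)$, but the direct face-property argument above is cleaner.
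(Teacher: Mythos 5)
Your proof is correct, but it follows a genuinely different route from the paper's. The paper argues via relative interiors: it notes that $(x_1+\cdots+x_r)/r$ lies both in $\reInt \stdFace(x_1+\cdots+x_r)$ (since the relative interior of a face is a cone containing $x_1+\cdots+x_r$) and in $\reInt S$ for $S$ the convex hull of the $x_i$, hence in $\reInt\stdFace(x_1,\ldots,x_r)$ by \eqref{eq:fs}; the two faces then coincide at once by \eqref{eq:faces_eq}. You instead prove the two inclusions separately, with the nontrivial direction resting on the defining ``sum-splitting'' property of a face: since $x_1+(x_2+\cdots+x_r)\in\stdFace(x_1+\cdots+x_r)$ with both summands in $\stdCone$, each $x_i$ is forced into $\stdFace(x_1+\cdots+x_r)$ by induction, and minimality of $\stdFace(x_1,\ldots,x_r)$ finishes the job. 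Your argument is more elementary and self-contained --- it needs only the definition of a face and the intersection characterization of the minimal face, not the relative-interior calculus or the fact that the barycenter lies in $\reInt S$ --- whereas the paper's one-line argument leverages machinery (\eqref{eq:faces_eq}, \eqref{eq:fs}) that it reuses repeatedly elsewhere, e.g.\ in Proposition~\ref{prop:chain2}. Both are complete; the only small points implicit in yours (that a convex cone is closed under addition, and that a face containing the $x_i$ contains their convex hull) are immediate.
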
 
\begin{proof}
By definition, $x_1+\cdots + x_r \in \reInt \stdFace(x_1+\cdots+x_r)$. Since $\reInt \stdFace(x_1+\cdots+x_r)$ is a cone we also have 
$(x_1+\cdots + x_r)/r \in \reInt \stdFace(x_1+\cdots+x_r)$.
Next, let $S$ be the convex hull of $x_1,\ldots, x_r$.
Then, \eqref{eq:fs} implies that $\reInt S \subseteq \reInt\stdFace(x_1,\ldots, x_r)$.

However, since  $(x_1+\cdots + x_r)/r \in \reInt S$ holds\footnote{A quick way to see that is to note that $S$ is the image of the unit simplex $P \coloneqq \{(\alpha_1,\ldots, \alpha_r) \in \Re^{r}\mid \alpha_i \geq 0, \alpha_1+\cdots \alpha_r = 1\}$ by the linear map $A$ that takes the $i$-th unit vector in $\Re^r$ to $x_i$. Then, recalling that $\reInt AP = A (\reInt P)$ (\cite[Theorem~6.6]{RT97}) leads to the desired conclusion.}, we conclude that $\reInt \stdFace(x_1,\ldots, x_r)$ and $\reInt \stdFace(x_1+\cdots+ x_r)$ intersect, so they are equal by \eqref{eq:faces_eq}.
\end{proof}

The \emph{automorphism group} of a cone $\stdCone$ is the set
$$
\Aut(\stdCone) := \{A \in \GL_n(\Re) \mid A\stdCone=\stdCone\},
$$
which is a subgroup of $\GL_n(\Re) \coloneqq \{A \in \Re^{n\times n} \mid \det A \ne 0\}$.

The $(n+1)$-dimensional second-order cone in $\Re^{n+1}$ is denoted by $\SOC{2}{n+1} \coloneqq \{(x_0,\bar{x}) \in \Re\times \Re^{n} \mid x_0 \geq 0, x_0^2 \geq x_1^2 + \cdots + x_n^2 \}$.
The space of $n\times n$ real symmetric matrices is denoted by $\S^n$ and the cone of $n\times n$ real symmetric positive semidefinite matrices is denoted by $\PSDcone{n}$.
If $x \in \Re^n$, we denote by $\Diag(x)$ the diagonal matrix in $\S^n$ having $x$ as its diagonal. 


\subsection{Hyperbolicity cones}
We review some basic 
facts about hyperbolicity cones. More details can be seen in \cite{Gu97,BGLS01,Re06}. Let $p:\Re^n \to \Re$ be an homogeneous polynomial of degree $d \coloneqq \deg p$. If $e \in \Re^n$ is such that $p(e) > 0$ and for every $x$ 
the polynomial $t \mapsto p(x -te)$ only has real roots, then $p$ is said to be \emph{hyperbolic along $e$}. The corresponding hyperbolicity cone is given by
\begin{equation}
\label{eq:hypcone}
\Lambda_+(p,e) \coloneqq \{x\in \Re^n \mid \textup{all roots of $t\mapsto p(te-x)$ are nonnegative}\}.
\end{equation}
We will sometimes omit $(p,e)$ and simply write $\Lambda_+$.

Given $x \in \Re^n$, the roots of $t\mapsto p(te-x)$ are called \emph{eigenvalues} of $x$ and denoted by
$
\lambda_1(x) \geq \lambda_2(x) \geq \cdots \geq \lambda_d(x),
$
counting with multiplicity. We also define $\lambda:\Re^n\to\Re^d$ by
$$
\lambda(x) \coloneqq (\lambda_1(x),\lambda_2(x),\ldots,\lambda_d(x))^T.
$$

Let $D_e^kp(x)= \frac{d^k}{dt^k}p(x+te)|_{t=0}$ denote the $k$-th directional derivative of $p$ in the direction $e$, which becomes a hyperbolic polynomial of degree $d-k$ along $e$ \cite[Lemma 1]{Gar}.
With that, we define 
the \emph{$k$-th derivative relaxation} (also called \emph{Renegar} derivative)  of 
$\Lambda_+(p,e) $ by
\begin{equation}\label{eq:hyp_deriv_def}
\Lambda_+^{(k)} \coloneqq \Lambda_+(D_e^kp,e).
\end{equation}
We note that $\Lambda_+^{(k)}$
can be alternatively written as (cf. \cite[Proposition 18, Corollary 19 and Theorem 20]{Re06})
\begin{equation}\label{eq:hyp_deriv}
\Lambda_+^{(k)} \coloneqq \{x\in \Re^n \mid D_e^k p(x) \geq 0,\; D_e^{k+1}p(x) \geq 0, \;\ldots,\; D_e^{d-1}p(x) \geq 0\}.
\end{equation}
Furthermore \eqref{eq:hyp_deriv} implies that
$\Lambda_+^{(0)} \subseteq \cdots \subseteq  \Lambda_+^{(d-1)}$ holds.
Next, we recall some facts on the faces of hyperbolicity cones.

\begin{lemma}[{Renegar \cite[Proposition 24]{Re06}}]\label{lem:face}
Suppose that $\Lambda_+ = \Lambda_+(p,e)$ is regular.
For $i=0,1,\ldots,\deg p -2$, every boundary face of $\Lambda_+^{(i)}$ is either a face of $\Lambda_+$ or an (exposed) extreme ray not contained in $\Lambda_+$.
\end{lemma}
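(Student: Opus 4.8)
The plan is to classify a boundary face $F$ of $\Lambda_+^{(i)}$ according to whether or not $F\subseteq\Lambda_+$: the inclusion case is soft, and the remaining case I would reduce to a rigidity statement about the ``newly created'' portion of $\bd\Lambda_+^{(i)}$. Writing $d=\deg p$, the starting point is that $\Lambda_+^{(i)}=\Lambda_+(D_e^i p,e)$ is itself a hyperbolicity cone, whose interior consists of the points all of whose $D_e^i p$-eigenvalues are positive; since a point of $\Lambda_+^{(i)}$ already has all such eigenvalues nonnegative, this gives
\[
\bd\Lambda_+^{(i)}=\{x\in\Lambda_+^{(i)}: D_e^i p(x)=0\},
\]
so every proper face $F$ of $\Lambda_+^{(i)}$ lies in $\{D_e^i p=0\}$. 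The easy case is $F\subseteq\Lambda_+$: then $F$ is automatically a face of $\Lambda_+$, because if $u,v\in\Lambda_+$ with $u+v\in F$ then $u,v\in\Lambda_+\subseteq\Lambda_+^{(i)}$ and $u+v$ lies in the face $F$ of $\Lambda_+^{(i)}$, forcing $u,v\in F$. Hence the work is to show that a proper face $F$ with $F\not\subseteq\Lambda_+$ is an exposed extreme ray (the last clause being automatic).

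Assuming $F\not\subseteq\Lambda_+$, I would first fix $x\in\reInt F$ with $x\notin\Lambda_+$ (possible because $\Lambda_+$ is closed and $\reInt F$ is dense in $F$), so $D_e^i p(x)=0$. The algebraic tool I would prove is: for a hyperbolic polynomial $r$ of degree $\delta$ along $e$ and $z\in\Lambda_+(r,e)$, $D_e r(z)=0$ implies $r(z)=0$ — because, up to a positive factor, $D_e r(z)=\sum_k\prod_{l\ne k}\lambda_l^r(z)$ with all $\lambda_l^r(z)\ge0$, so its vanishing forces at least two of the $\lambda_l^r(z)$ to vanish and hence $r(z)=0$. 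Applying this to $r=D_e^l p$ repeatedly yields: if $x\in\Lambda_+^{(l)}$ and $D_e^{l+1}p(x)=0$, then $D_e^l p(x)=0$. Taking $j\le i$ to be the smallest index with $x\in\Lambda_+^{(j)}$ (so $j\ge1$, i.e. $D_e^{j-1}p(x)<0$, since $x\notin\Lambda_+$), the iterated tool then gives $D_e^j p(x)=\cdots=D_e^i p(x)=0$. Using the description of the minimal face of a hyperbolicity cone through a point in terms of that point's eigenvalue multiplicity — a description compatible with differentiation along $e$, since $\Lambda_+^{(l)}=\Lambda_+(D_e^l p,e)$ and the $D_e^l p$-eigenvalues interlace those of $p$ — I would check that $\minFace{\Lambda_+^{(j)}}(x)$ is still a face of $\Lambda_+^{(i)}$; as it and $F$ are faces of $\Lambda_+^{(i)}$ sharing the relative-interior point $x$, they coincide, so $F$ is a face of $\Lambda_+^{(j)}$. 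After replacing $i$ by $j$, I may therefore assume $D_e^{i-1}p(x)<0$.

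The crux, and the step I expect to be the main obstacle, is to show that $\dim F=1$. Suppose $\dim F\ge2$; then, since $x\in\reInt F$, there is $y\notin\Re x$ with $x+ty\in F$ for $|t|\le\varepsilon$, all of these lying in $\bd\Lambda_+^{(i)}$, so $t\mapsto D_e^i p(x+ty)$ vanishes on an interval and hence identically, and $D_e^i p\equiv0$ on the line $x+\Re y$. I would then use $r:=D_e^{i-1}p$, hyperbolic of degree $\delta:=d-i+1$ with $D_e r=D_e^i p$: interlacing (Rolle) of the $r$-eigenvalues with the $D_e^i p$-eigenvalues shows that every point of $\Lambda_+^{(i)}=\Lambda_+(D_e r,e)$ has at most one negative $r$-eigenvalue, so — as $r(x)<0$ — that eigenvalue is unique and the other $r$-eigenvalues of $x$ are strictly positive. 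Carrying this rigidity along the line $x+\Re y$ (on the $2$-plane through $x$ spanned by $y$ and $e$, where $r$ is real-rooted in the $e$-direction with positive leading coefficient, its $e$-derivative $D_e^i p$ vanishes identically along $x+\Re y$, and membership in $\bd\Lambda_+^{(i)}$ keeps all but one of the $D_e^i p$-eigenvalues nonnegative with the smallest equal to $0$) is what the argument turns on: via G{\aa}rding's inequality it converts ``at most one negative $r$-eigenvalue'' into a strict-convexity statement for $\bd\Lambda_+^{(i)}$ off $\Lambda_+^{(i-1)}$ — equivalently, it reduces the situation to the boundary of a single Lorentz cone, which carries no flat piece of dimension $\ge2$ — contradicting the flat segment $\{x+ty:|t|\le\varepsilon\}\subseteq F$ with $y\notin\Re x$. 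Hence $F=\Re_+x$.

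Finally, exposedness comes cheaply. Being a boundary point of $\Lambda_+^{(i)}$, $x$ has a supporting hyperplane $H$, and $E:=\Lambda_+^{(i)}\cap H$ is then an exposed, proper face of $\Lambda_+^{(i)}$ with $x\in E$. Since $x\in E$ and $x\notin\Lambda_+$, we have $E\not\subseteq\Lambda_+$, so by the step just completed $E$ is an extreme ray; as $x\in E$, this forces $E=\Re_+x=F$. Thus $F$ is an exposed extreme ray not contained in $\Lambda_+$, which is the assertion in the remaining case.
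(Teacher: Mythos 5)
The paper offers no proof of this lemma --- it is quoted directly from Renegar --- so there is nothing internal to compare against; I will assess your proposal on its own terms. Most of its scaffolding is correct and well executed: the identification $\bd\Lambda_+^{(i)}=\{x\in\Lambda_+^{(i)}\mid D_e^ip(x)=0\}$, the soft observation that $F\subseteq\Lambda_+$ already forces $F\face\Lambda_+$, the choice of $x\in\reInt F$ with $x\notin\Lambda_+$, the elementary fact that $z\in\Lambda_+(r,e)$ and $D_er(z)=0$ force $r(z)=0$ (so that $D_e^jp(x)=\cdots=D_e^ip(x)=0$ for the minimal $j$ with $x\in\Lambda_+^{(j)}$), the resulting reduction to the case $D_e^{i-1}p(x)<0$, the interlacing argument showing such an $x$ has exactly one negative eigenvalue with respect to $r=D_e^{i-1}p$, and the derivation of exposedness from one-dimensionality at the end are all sound.

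The genuine gap is exactly where you flag it. The entire theorem rests on the claim that a proper face $F$ of $\Lambda_+^{(i)}$ whose relative interior contains a point $x$ with $D_e^{i-1}p(x)<0$ must be one-dimensional, and for this you offer only the assertion that G{\aa}rding's inequality ``converts'' the one-negative-$r$-eigenvalue count into strict convexity of $\bd\Lambda_+^{(i)}$ off $\Lambda_+^{(i-1)}$, ``equivalently'' a reduction to the boundary of a Lorentz cone. Neither half of that sentence is an argument. G{\aa}rding's inequality yields rigidity (equality iff proportionality) only for points in the \emph{interior} of the relevant hyperbolicity cone; applied to $q=D_e^ip$ at boundary points it degenerates to a nonnegativity statement with no equality case to exploit, and applied to $p$ it says nothing about points outside $\Lambda_+$. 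Likewise, knowing that $x$ has exactly one negative $r$-eigenvalue and exactly one vanishing $q$-eigenvalue constrains only the univariate restrictions $t\mapsto r(x+te)$ and $t\mapsto q(x+te)$; it says nothing yet about the behaviour of $q$ in the transverse direction $y$, which is where a flat piece $\{x+ty\}\subseteq\bd\Lambda_+^{(i)}$ would live, and no mechanism is given for ruling it out. This rigidity statement --- that the portion of $\bd\Lambda_+^{(i)}$ outside $\Lambda_+^{(i-1)}$ consists of exposed extreme rays --- is the hard core of Renegar's proof, and the proposal does not supply it. A secondary caveat: your reduction step invokes the multiplicity description of minimal faces (Lemma~\ref{lem:face-mul}, Renegar's Proposition~25), which in Renegar's development appears \emph{after} the present statement, so one would need to verify that no circularity is introduced.
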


\begin{lemma}[{Renegar \cite[Proposition 25]{Re06}}]\label{lem:face-mul}
Let $F$ be a boundary face of $\Lambda_+$ other than its lineality space $\Lambda_+ \cap (-\Lambda_+)$.
For any $x \in \reInt \stdFace$, let $m$ be the multiplicity of zero as an eigenvalue. Then, $F$ is a face of $\Lambda_+^{(m-1)}$.
\end{lemma}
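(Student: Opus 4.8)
The plan is to obtain this quickly from Lemma~\ref{lem:face}, so that the only real work is a short derivative computation pinning $x$ to the correct boundary stratum. I will work under the standing regularity assumption (as in Lemma~\ref{lem:face}), so the lineality space is $\{0\}$ and $F$ is a nonzero boundary face. First I would record the admissible range of $m$. Any $x\in\reInt\stdFace$ is nonzero and lies in $\bd\Lambda_+$: otherwise the minimal face of $\Lambda_+$ containing $x$ would be $\Lambda_+$ itself, forcing $F=\Lambda_+$. Hence $x$ has a zero eigenvalue, so $m\geq 1$. It also has a positive eigenvalue, since $\lambda(x)=0$ would give $\pm x\in\Lambda_+$, i.e.\ $x$ in the lineality space $\{0\}$, a contradiction; so $m\leq d-1$ where $d=\deg p$. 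In particular $m-1\in\{0,1,\dots,\deg p-2\}$, the range in which Lemma~\ref{lem:face} applies.

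Next I would compute the directional derivatives of $p$ at $x$. From the eigenvalue factorization $p(x+te)=p(e)\prod_{i=1}^{d}(t+\lambda_i(x))$, pulling out the $t^m$ coming from the $m$ zero eigenvalues gives $p(x+te)=p(e)\,t^{m}\prod_{\lambda_i(x)>0}(t+\lambda_i(x))$. The expansion of the right-hand side in $t$ has vanishing coefficients in degrees $0,\dots,m-1$ and a strictly positive coefficient in degree $m$, so $D_e^k p(x)=0$ for $0\leq k\leq m-1$ and $D_e^m p(x)>0$. Since $\Lambda_+^{(m-1)}=\Lambda_+(D_e^{m-1}p,e)$ by definition and $D_e^{m-1}p(x)=0$, the point $x$ is not in $\interior\Lambda_+^{(m-1)}$; and $x\in\Lambda_+=\Lambda_+^{(0)}\subseteq\Lambda_+^{(m-1)}$, so $x\in\bd\Lambda_+^{(m-1)}$.

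Now the conclusion is immediate. Let $G$ be the minimal face of $\Lambda_+^{(m-1)}$ containing $x$; then $x\in\reInt G$, and since $\Lambda_+^{(m-1)}$ is full-dimensional and $x\in\bd\Lambda_+^{(m-1)}$, $G$ is a boundary face of $\Lambda_+^{(m-1)}$. Applying Lemma~\ref{lem:face} with $i=m-1$, either $G$ is a face of $\Lambda_+$, or $G$ is an extreme ray not contained in $\Lambda_+$. The second alternative is impossible: if $G=\Re_+ g$, then $x=\alpha g$ with $\alpha\geq 0$, and $\alpha>0$ would give $g\in\Lambda_+$ hence $G\subseteq\Lambda_+$, while $\alpha=0$ contradicts $x\neq 0$. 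So $G$ is a face of $\Lambda_+$; since $F$ and $G$ are both faces of $\Lambda_+$ with $x\in\reInt F\cap\reInt G$, \eqref{eq:faces_eq} yields $F=G$, so $F$ is a face of $\Lambda_+^{(m-1)}$.

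Given Lemma~\ref{lem:face}, there is no serious obstacle; the only delicate points are the two ingredients that feed into it. One is the derivative bookkeeping that calibrates the index exactly to $m-1$: $x$ must sit on the boundary of $\Lambda_+^{(m-1)}$ (which needs $D_e^{m-1}p(x)=0$), while the fact that the further constraints $D_e^j p(x)>0$ for $j\geq m$ are active is irrelevant to facial membership. The other is checking that $m-1$ stays in $\{0,\dots,\deg p-2\}$, which is precisely why the lineality space must be excluded from the statement. If one insisted on not invoking Lemma~\ref{lem:face}, the hard part would instead be to argue directly that $y,z\in\Lambda_+^{(m-1)}$ with $y+z\in F$ forces $y,z\in F$ by tracking how the multiplicities of the zero eigenvalue behave under addition via G\r{a}rding's inequality; that is where the genuine difficulty would lie, and it is sidestepped here.
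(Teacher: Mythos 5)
The paper does not actually prove this lemma: it is quoted verbatim from Renegar \cite[Proposition~25]{Re06}, so there is no in-paper argument to compare against. Judged on its own, your proof is correct in the regular case. The three ingredients all check out: (a) the localization $1\leq m\leq d-1$, using that $x\in\reInt F$ forces $x\in\bd\Lambda_+$ (else $F=\Lambda_+$ by \eqref{eq:fs}) and that $\mult(x)=d$ would put $x$ in the trivial lineality space; (b) the factorization $p(x+te)=p(e)\,t^{m}\prod_{\lambda_i(x)>0}(t+\lambda_i(x))$, which is the correct consequence of the paper's conventions for eigenvalues and for $D_e^k p$, giving $D_e^{m-1}p(x)=0$ and hence $x\in\bd\Lambda_+^{(m-1)}$; and (c) the reduction to Lemma~\ref{lem:face} applied to the minimal face $G$ of $\Lambda_+^{(m-1)}$ containing $x$, where the extreme-ray alternative is correctly excluded because $x\in G\cap\Lambda_+$ is nonzero, and \eqref{eq:faces_eq} then identifies $G$ with $F$. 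This is a genuinely economical route: rather than verifying the face property of $F$ in $\Lambda_+^{(m-1)}$ directly (which, as you note, would require controlling multiplicities under addition), you let Renegar's classification of boundary faces of derivative cones do all the work, at the cost of resting on Lemma~\ref{lem:face}, which the paper also only quotes. The one caveat is that you prove a slightly weaker statement than the one displayed: the lemma is phrased so as to cover cones with nontrivial lineality space, whereas your argument assumes $\Lambda_+$ regular. Since every invocation of this lemma in the paper (Proposition~\ref{prop:strict}, Theorem~\ref{theo:hyper_aut}) is under a regularity hypothesis, this restriction is harmless here, but it should be flagged if the lemma is to be cited in the stated generality.
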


\paragraph{Rank function over hyperbolicity cones}
For $x \in \Lambda_+(p,e)$, we define the $\rank(x)$ as 
$d - m$, where $m$ is the multiplicity of $0$ as an eigenvalue of $x$.
In other words, $\rank(x)$ is the number of nonzero eigenvalues of $x$.
For convenience we also define $\mult(x) \coloneqq m$ so that
\[
\rank(x) + \mult(x) = d.\]
We recall that $p$ is also hyperbolic along every $\hat e\in \reInt \Lambda_+(p,e)$
and that $\Lambda_+(p,e) = \Lambda_+(p,\hat e)$, see \cite[Theorem~3]{Re06}.
Although the eigenvalue function $\lambda$ depends on the chosen hyperbolicity direction, the $\rank$ and $\mult$ functions do not, see \cite[Proposition~22]{Re06}.
So $\rank$ and $\mult$ only depend on $p$ and the underlying cone, but not on the choice of hyperbolicity direction (as long as the direction belongs to the relative interior of the cone).

If $F$ is a face of $\Lambda_+(p,e)$, we define the rank of $F$ as the maximum of $\rank(x)$ over $F$. The rank function has the following well-known properties.

\begin{proposition}[Properties of rank]\label{prop:rank}
	Let $x,y \in \Lambda_+$ and $F,F'$ be faces of $\Lambda_+$.
	\begin{enumerate}[$(i)$]
		\item\label{prop:rank:1} $\rank(x+y) \leq \rank(x) + \rank(y)$.
		\item\label{prop:rank:2} Suppose $x \in \stdFace$, then $x \in \reInt F$ if and only if 
		$\rank(x) = \rank(\stdFace)$
		\item\label{prop:rank:3} $\rank(F) < \rank(F')$ holds when $F \subsetneq F'$.
		\item \label{prop:rank:4} If $\Lambda_+$ is pointed and $x \in \Lambda_+$, then $\rank(x) = 0 \Leftrightarrow x = 0$.
	\end{enumerate}		
\end{proposition}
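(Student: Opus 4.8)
\medskip
\noindent\textbf{Proof proposal.} I would prove the four items in the order (iv), (i), (ii), (iii), resolving the mild interdependence between the two implications of (ii) and item (iii) by proving the first implication of (ii), then (iii), then the second implication of (ii). Throughout I use two standard facts about the eigenvalue map (see \cite{BGLS01,Re06}): $\rank$ is invariant under positive rescaling, since $\lambda_i(\alpha x)=\alpha\lambda_i(x)$ for $\alpha>0$; and the eigenvalues are $\Lambda_+$-monotone, i.e.\ $\lambda_i(x+y)\ge\lambda_i(x)$ whenever $y\in\Lambda_+$.

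Item (iv) is immediate: if $\rank(x)=0$ then every eigenvalue of $x$ vanishes, so $p(te-x)=p(e)t^d$ identically in $t$, and homogeneity gives $p(te+x)=(-1)^dp(-te-x)=p(e)t^d$ too; hence every eigenvalue of $-x$ also vanishes, so $-x\in\Lambda_+$ and $x\in\Lambda_+\cap(-\Lambda_+)=\{0\}$ by pointedness (the converse is clear from $p(te)=t^dp(e)$). Item (i) is the only one requiring a genuinely analytic input, and I expect it to be the main obstacle. My preferred route reduces it to subadditivity of matrix rank: for $x,y\in\Lambda_+$ the polynomial $q(s,t,r):=p(sx+ty+re)$ is homogeneous of degree $d$ and hyperbolic along $(0,0,1)$ — because $\tau\mapsto q(s_0,t_0,r_0-\tau)=p\big((r_0-\tau)e-(-(s_0x+t_0y))\big)$ is real-rooted — with $q(0,0,1)=p(e)>0$, so by the Helton--Vinnikov theorem for hyperbolic polynomials in three variables we may write $q(s,t,r)=p(e)\det(sM_1+tM_2+rI)$ with $M_1,M_2$ symmetric $d\times d$. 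A short computation shows that, for every $(a,b)$, the $q$-eigenvalues of $(a,b,0)$ are the ordinary eigenvalues of $aM_1+bM_2$ and also equal the $p$-eigenvalues of $ax+by$; taking $(a,b)\in\{(1,0),(0,1),(1,1)\}$ gives $\rank(x)=\rank(M_1)$, $\rank(y)=\rank(M_2)$, $\rank(x+y)=\rank(M_1+M_2)$, and subadditivity of matrix rank finishes. (Alternatively, (i) follows at once from the Weyl-type inequality $\lambda_{i+j-1}(x+y)\le\lambda_i(x)+\lambda_j(y)$: for $k>\rank(x)+\rank(y)$, choosing $i=\rank(x)+1$ and $j=k-\rank(x)$ forces $0\le\lambda_k(x+y)\le0$.)

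For the first implication of (ii), let $x\in\reInt F$ and $z\in F$; picking $\mu>0$ with $w:=(1+\mu)x-\mu z\in F\subseteq\Lambda_+$ gives $(1+\mu)x=w+\mu z$, hence $(1+\mu)\lambda_i(x)=\lambda_i(w+\mu z)\ge\mu\lambda_i(z)$ by monotonicity, so $\lambda_i(z)>0\Rightarrow\lambda_i(x)>0$ and $\rank(z)\le\rank(x)$. Taking the supremum over $z\in F$ gives $\rank(x)=\rank(F)$, so $\rank$ is constant on $\reInt F$ (and $\mult$ is minimised there). For (iii), let $F\subsetneq F'$; since $\rank(F)\le\rank(F')$ always, assume equality and seek a contradiction. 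If $F'=\Lambda_+$ this is immediate (a face meeting $\reInt\Lambda_+$ equals $\Lambda_+$, so $\rank(F)\le d-1<d=\rank(F')$), so let $F'$ be a boundary face and put $m:=d-\rank(F')=\mult(F')\ge1$. By Lemma~\ref{lem:face-mul}, $F'$ — and hence its subface $F$ — is a boundary face of $\Lambda_+^{(m-1)}=\Lambda_+(D_e^{m-1}p,e)$. For $x\in\reInt F$ the first implication of (ii) gives $\mult(x)=m$, so $t\mapsto p(te-x)$ has a zero of order exactly $m$ at $t=0$; differentiating $m-1$ times, $(D_e^{m-1}p)(te-x)$ has a zero of order exactly $1$ at $t=0$, and likewise at any $x'\in\reInt F'$. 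Thus $F$ and $F'$ are both multiplicity-one faces of $\Lambda_+^{(m-1)}$. But a multiplicity-one face of a hyperbolicity cone is a maximal proper face — a standard fact — which is incompatible with $F\subsetneq F'\subsetneq\Lambda_+^{(m-1)}$; hence $\rank(F)<\rank(F')$.

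Finally, the second implication of (ii): if $x\in F$ has $\rank(x)=\rank(F)$ but $x\notin\reInt F$, then the minimal face $\stdFace(x)$ of $\Lambda_+$ containing $x$ satisfies $\stdFace(x)\subsetneq F$, and the first implication of (ii) together with (iii) gives $\rank(x)=\rank(\stdFace(x))<\rank(F)$, a contradiction — so $\rank(x)=\rank(F)$ forces $x\in\reInt F$. In summary, the substantive step is item (i), subadditivity of rank, reached either through the three-variable Helton--Vinnikov reduction or through the Weyl-type eigenvalue inequalities; items (ii)--(iv) are then bookkeeping with the minimal-face operator and Renegar's facial lemmas, the one extra ingredient worth isolating being that multiplicity-one faces coincide with the maximal proper faces of a hyperbolicity cone.
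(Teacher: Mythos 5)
Your items (i) and (iv) are fine, and they take a genuinely different route from the paper: for (i) the paper simply cites Br\"and\'en's result that $S\mapsto\rank\bigl(\sum_{z\in S}z\bigr)$ is a polymatroid (hence submodular), whereas you inline the underlying argument via the three-variable Helton--Vinnikov reduction to subadditivity of matrix rank; for (iv) the paper quotes Renegar's ``$\mult(x)=d$ iff $x$ is in the lineality space'' while you rederive the needed special case directly from homogeneity. Your first implication of (ii) via eigenvalue monotonicity is also correct and more self-contained than the paper's citation of \cite[Theorem~26]{Re06}.

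The gap is in item (iii), and it propagates to your second implication of (ii). You reduce (iii) to the claim that ``a multiplicity-one face of a hyperbolicity cone is a maximal proper face,'' which you call a standard fact. It is not a quotable standard fact, and more importantly it is essentially the statement you are trying to prove: granting (iii), a multiplicity-one face $F$ with $F\subsetneq F'\subsetneq\Lambda_+$ would force $\rank F\le d-2$, i.e.\ $\mult F\ge 2$ --- so the claim is exactly the $\mult=1$ instance of (iii) (equivalently, of the second implication of (ii) saying the maximal rank is attained \emph{only} on $\reInt F'$). The natural attempts to prove it independently (smoothness of the boundary at multiplicity-one points, uniqueness of the supporting hyperplane) only show that every proper face containing such a point $x$ lies in $\Lambda_+\cap\{\langle\nabla p(x),\cdot\rangle=0\}$; they do not show $x\in\reInt$ of that exposed face, which is again the missing implication. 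Since you then derive the second half of (ii) \emph{from} (iii), the argument is circular as structured. The repair is cheap: take both implications of (ii) from \cite[Theorem~26]{Re06} (stated there for $\mult$), as the paper does; then (iii) is immediate --- if $F\subsetneq F'$ and $\rank F=\rank F'$, any $x\in\reInt F\subseteq F'$ attains $\rank F'$, so by (ii) $x\in\reInt F'$, and \eqref{eq:faces_eq} forces $F=F'$, a contradiction.
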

\begin{proof}
	Item~\ref{prop:rank:1} is true if $x = y$, because $\rank(2x) = \rank(x)$.
	Next, we assume $x \neq y$ and let $\mathcal{E} \coloneqq \{x,y\}$. It is shown in \cite[Proposition~3.2]{Br11}
	that the function $r: 2^\mathcal{E} \to \mathbb{N}$ defined by 
	$r(\emptyset) \coloneqq 0$ and
	\[
	r(S) \coloneqq \rank\left(\sum _{z \in S} z \right)
	\]
	is a polymatroid and, therefore, submodular.
	We have
	\[r(\{x\}\cup \{y\} ) + r(\{x\}\cap \{y\} ) \leq  r(\{x\} )+r(\{y\}).
	\] 
	That is,   $\rank(x+y) \leq \rank(x) + \rank(y)$.
	This proves item~\ref{prop:rank:1}.
	
	Item~\ref{prop:rank:2} is direct consequence of \cite[Theorem~26]{Re06} which gives an analogous statement for the $\mult(\cdot)$ function.
	Moreover, item~\ref{prop:rank:2} combined with \eqref{eq:faces_eq} leads to item~\ref{prop:rank:3}.
	Finally, item~\ref{prop:rank:4} follows from \cite[Proposition~11]{Re06}, which 
	states that $\mult(x) = d$ if and only if $x $ belongs to the lineality space of $\Lambda_+$. Since $\Lambda_+$ is pointed, the lineality space is 
	$\{0\}$. 
\end{proof}

\paragraph{Minimal polynomials}
If $\Lambda_+$ is a hyperbolicity cone, there could be several polynomials of different degrees satisfying $\Lambda_+ = \Lambda_+(p,e)$. However, the polynomial of minimal degree that generates $\Lambda_+$ is unique up to scaling by a positive constant. The precise result is as follows.
\begin{proposition}[Helton and Vinnikov, {\cite[Lemma~2.1]{HV07}}]\label{prop:hv}
	A homogeneous hyperbolic polynomial $p$ of minimal degree such that $\Lambda_+ = \Lambda_+(p,e)$ is unique up to multiplication by a positive constant. If $\Lambda_+(p,e) = \Lambda_+(q,e)$, then $q = ph$, 
	where $h$ is a polynomial that is strictly positive on a dense connected subset of $\Lambda_+(p,e)$.
\end{proposition}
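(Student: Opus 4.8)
The plan is to reduce everything to unique factorization in $\Re[x_1,\ldots,x_n]$, using two standard facts about hyperbolicity cones. First, if $p$ is hyperbolic along $e$ and $p = c\,\prod_{i=1}^{k} p_i^{m_i}$ is its factorization into pairwise non-proportional irreducibles, then each $p_i$ is hyperbolic along $e$. Second, $\interior\Lambda_+(p,e) = \bigcap_{i=1}^{k}\interior\Lambda_+(p_i,e)$, and taking closures gives $\Lambda_+(p,e) = \bigcap_{i=1}^{k}\Lambda_+(p_i,e)$. I would record at the outset that $\Lambda_+ := \Lambda_+(p,e)$ is automatically full-dimensional (it contains $e$ in its interior) and that each $\Lambda_+(p_i,e)$ contains $\Lambda_+$. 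Since multiplicities do not affect the sign conditions defining the cone, the squarefree part $\prod_{i=1}^{k} p_i$ already generates $\Lambda_+$, so the real work is to discard the factors whose zero sets do not reach the boundary of $\Lambda_+$.

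First I would isolate the \emph{essential} factors. Write $V_i := \{x \in \Re^n \mid p_i(x) = 0\}$; since $p_i$ has a constant nonzero sign on the connected open set $\interior\Lambda_+(p_i,e)$, we have $\bd\Lambda_+(p_i,e) \subseteq V_i$, and hence $\bd\Lambda_+ \subseteq \bigcup_{i=1}^{k} V_i$. Let $I$ be the set of indices $i$ for which $V_i \cap \bd\Lambda_+$ is $(n-1)$-dimensional, and set $\bar p := \prod_{i\in I}p_i$. Because the boundary of a full-dimensional convex set has topological dimension $n-1$ at every point and is covered by the finitely many hypersurfaces $V_i$, a local dimension count yields $\bd\Lambda_+ \subseteq \bigcup_{i\in I}V_i$: a boundary point lying only on $V_i$'s with $i\notin I$ would force one of those hypersurfaces to contain an $(n-1)$-dimensional neighbourhood of the point inside $\bd\Lambda_+$, contradicting $i\notin I$. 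Now $\Lambda_+(\bar p,e) = \bigcap_{i\in I}\Lambda_+(p_i,e) \supseteq \Lambda_+$ because dropping factors only enlarges the intersection; and if some $y$ belonged to $\bigcap_{i\in I}\interior\Lambda_+(p_i,e)$ but not to $\Lambda_+$, then the segment $[e,y]$ would lie in every convex set $\interior\Lambda_+(p_i,e)$ with $i\in I$ yet would cross $\bd\Lambda_+ \subseteq \bigcup_{i\in I}V_i$ at some point $z$ with $p_i(z)=0$ for some $i\in I$, a contradiction. Hence $\Lambda_+(\bar p,e) = \Lambda_+$.

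To finish, I would take an arbitrary homogeneous $q$ hyperbolic along $e$ with $\Lambda_+(q,e) = \Lambda_+$. Every point of $\bd\Lambda_+$ has $0$ as an eigenvalue with respect to $q$, so $q$ vanishes on $\bd\Lambda_+$ and therefore on its Zariski closure, which, by the argument of the previous paragraph, contains each $V_i$ with $i\in I$. Each essential $p_i$ is absolutely irreducible --- a real-irreducible polynomial that failed to be irreducible over $\mathbb{C}$ would equal $\alpha\,g\bar g$ for a single complex-irreducible $g$ and real $\alpha$, and its real zero set would then have codimension at least two --- so $V_i$ is Zariski dense in the complex hypersurface $\{p_i = 0\} \subseteq \mathbb{C}^n$, and the Nullstellensatz gives $p_i \mid q$. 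As the $p_i$ $(i\in I)$ are pairwise non-proportional, $\bar p$ divides $q$, say $q = \bar p\, h$. Normalizing $\bar p$ so that $\bar p(e) > 0$, both $\bar p$ and $q$ are positive on the connected dense subset $\interior\Lambda_+$ of $\Lambda_+$, hence so is $h$. In particular $\bar p$ has minimal degree among the generators of $\Lambda_+$, and if $p,p'$ are two generators of minimal degree then $p' = p\,h$ with $\deg h = 0$, i.e.\ $h$ a positive constant; this gives both assertions, with $\bar p$ in the role of the minimal polynomial.

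The delicate part, as is typical here, is the algebraic geometry rather than the convexity: one must carefully justify that $\bd\Lambda_+$ is covered exactly by the essential irreducible components $V_i$ $(i\in I)$, and that an absolutely irreducible real polynomial is recovered up to a nonzero constant from its codimension-one real zero set. The second point is where real and complex zero sets must not be conflated; it works because such a real zero set is Zariski dense in the corresponding complex hypersurface, which reduces matters to the classical Nullstellensatz over $\mathbb{C}$.
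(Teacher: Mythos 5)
The paper does not prove this proposition at all --- it is imported verbatim from Helton--Vinnikov \cite[Lemma~2.1]{HV07} --- so there is no internal proof to compare against. Your argument is correct and is essentially the argument of the cited source: factor $p$ into real irreducibles (each hyperbolic along $e$), identify the ``essential'' factors as those whose real zero sets meet $\bd\Lambda_+$ in dimension $n-1$, show their product generates the cone, and use absolute irreducibility plus the Nullstellensatz to force every other generator to be divisible by it. The only steps that deserve the care you already flag are the real-algebraic-geometry ones (that a real-irreducible but not absolutely irreducible polynomial has real zero set of codimension at least two, and that an $(n-1)$-dimensional semialgebraic piece of $V_{\Re}(p_i)$ is Zariski dense in the complex hypersurface $\{p_i=0\}$); as written these are sketched but correct.
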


\paragraph{Basic properties of automorphisms of hyperbolicity cones}

We start our explorations on automorphisms with the following basic result, which connects the automorphisms of a hyperbolicity cone with the underlying hyperbolic polynomial, provided that it is minimal.

\begin{proposition}\label{prop:aut}
	Suppose that $\Lambda_+ = \Lambda_+(p,e)$ where $p$ is of minimal degree and let $A \in \GL_n(\Re)$. Then 
	$A \in  \Aut(\Lambda_+)$ if and only if $Ae \in \reInt \Lambda_+$ and there exists a positive constant $\kappa$ such that $p = \kappa (p\circ A)$.	
\end{proposition}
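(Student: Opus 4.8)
The plan is to reduce everything to the behaviour of the \emph{pullback polynomial} $q \coloneqq p\circ A$, which is again homogeneous of degree $d\coloneqq\deg p$ and nonzero since $A$ is invertible, and then to invoke the Helton--Vinnikov uniqueness result (Proposition~\ref{prop:hv}). The crucial preliminary step, which I would isolate as a claim, is the following: \emph{whenever $Ae\in\reInt\Lambda_+$, the polynomial $q$ is hyperbolic along $e$ and $\Lambda_+(q,e)=A^{-1}\Lambda_+$.} To prove it, recall from \cite[Theorem~3]{Re06} that $p$ is hyperbolic along every $\hat e\in\reInt\Lambda_+(p,e)$, with $\Lambda_+(p,\hat e)=\Lambda_+$; applying this with $\hat e=Ae$ shows that $t\mapsto p(tAe-y)$ has only real roots for every $y$ and that $\Lambda_+(p,Ae)=\Lambda_+$. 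Substituting $y=Ax$ and using $q(z)=p(Az)$, we get that $t\mapsto q(te-x)=p(tAe-Ax)$ has only real roots and $q(e)=p(Ae)>0$, so $q$ is hyperbolic along $e$; moreover $x\in\Lambda_+(q,e)$ if and only if $Ax\in\Lambda_+(p,Ae)=\Lambda_+$, i.e.\ $\Lambda_+(q,e)=A^{-1}\Lambda_+$.

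For the ``if'' direction, assume $Ae\in\reInt\Lambda_+$ and $p=\kappa\,q$ with $\kappa>0$. Multiplying a hyperbolic polynomial by a positive constant changes neither the roots of the univariate restrictions nor the generated cone, so $\Lambda_+(q,e)=\Lambda_+(\kappa^{-1}p,e)=\Lambda_+$. Comparing with the claim gives $A^{-1}\Lambda_+=\Lambda_+$, hence $A\Lambda_+=\Lambda_+$ and $A\in\Aut(\Lambda_+)$.

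For the ``only if'' direction, assume $A\in\Aut(\Lambda_+)$. Since $A$ is a linear bijection, $A(\reInt\Lambda_+)=\reInt(A\Lambda_+)=\reInt\Lambda_+$, and since $e\in\reInt\Lambda_+$ (its eigenvalues are all equal to $1$, so the smallest-eigenvalue function is positive on a neighbourhood of $e$), we get $Ae\in\reInt\Lambda_+$. The claim now applies and yields $\Lambda_+(q,e)=A^{-1}\Lambda_+=\Lambda_+$, so $q=p\circ A$ is a hyperbolic polynomial along $e$ generating the same cone as the minimal-degree polynomial $p$. By Proposition~\ref{prop:hv}, $q=ph$ for some polynomial $h$ that is strictly positive on a dense connected subset of $\Lambda_+$; but $\deg q=d=\deg p$ forces $\deg h=0$, so $h$ is a positive constant $\kappa'$, whence $p=(\kappa')^{-1}q=(\kappa')^{-1}(p\circ A)$, which is the assertion with $\kappa=(\kappa')^{-1}>0$.

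The steps I expect to demand the most care are, first, the preliminary claim — specifically the use of the invariance of a hyperbolicity cone under moving the hyperbolicity direction within its relative interior, which is what lets us identify $\Lambda_+(p\circ A,e)$ with $A^{-1}\Lambda_+$ — and, second, in the forward direction, the invocation of Proposition~\ref{prop:hv}, where the degree count $\deg(p\circ A)=\deg p$ is precisely what rules out a nonconstant cofactor $h$. Minimality of $p$ is used only at this last step; the remaining manipulations are routine and I anticipate no computational difficulties.
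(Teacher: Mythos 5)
Your proposal is correct and follows essentially the same route as the paper: both directions rest on Renegar's invariance of the hyperbolicity cone under moving the direction within its relative interior (which identifies $\Lambda_+(p\circ A,e)$ with $A^{-1}\Lambda_+$), and the forward direction concludes via the Helton--Vinnikov uniqueness of the minimal-degree polynomial. Your packaging of the preliminary claim as $\Lambda_+(p\circ A,e)=A^{-1}\Lambda_+$ is a tidy way to unify the two directions, but it is the same argument.
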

\begin{proof}
	First, suppose that $A \in   \Aut(\Lambda_+)$ and let $q \coloneqq p \circ A$. 
	Because $A$ is an automorphism and 
	$e \in \reInt \Lambda_+$, then $Ae \in \reInt\Lambda_{+}$, so 	$p$ is also hyperbolic along $Ae$, e.g., see \cite[Theorem~3]{Re06}.
	This tells us that $q = p \circ A$ is hyperbolic along $e$.
	
	The roots of $t \mapsto q(x-te)$ are nonnegative if and only if the roots of $t \mapsto p(Ax-tAe)$ are nonnegative. That is, if and only if $Ax \in \Lambda_+$. But, since $
	A$ is an automorphism, this happens if and only if $x \in \Lambda_+$.
	We conclude that $\Lambda_+ = \Lambda_+(q,e)$.	
	The polynomial $p$ is minimal and $q$ has the same degree as $p$, so Proposition~\ref{prop:hv} tells us that there exists a positive constant $\kappa$ such that 
	\[
	p = \kappa (p\circ A).
	\] 
	
Next, we prove the converse, so suppose that $A \in \GL_n(\Re)$ is such that $Ae \in \reInt \Lambda_+$ and there exists a positive constant $\kappa$ such that $p = \kappa (p\circ A)$. Then, 
$p$ and $p\circ A$ are both hyperbolic along $e$ and generate the same hyperbolicity cone $\Lambda_+(p,e) = \Lambda_+(p\circ A,e)$.

Let $x\in \Re^n$ be arbitrary.
Then, $\Lambda_+(p,e) = \Lambda_+(p\circ A,e)$ implies that the roots of 
$t \mapsto p(x - te)$ are nonnegative if and only if the roots of $t \mapsto p(Ax - tAe)$ are nonnegative.
Next, we recall that the hyperbolicity cone stays the same if another relative interior direction is chosen as hyperbolic direction (\cite[Theorem~3]{Re06}).
Since 
$Ae \in \reInt \Lambda_+$,  
the roots of $t \mapsto p(Ax - tAe)$ are nonnegative if and only if 
the roots of $t \mapsto p(Ax - te)$ are nonnegative.
The overall conclusion is that $x \in \Lambda_+$ if and only if $Ax \in \Lambda_+$. 
That is, 
$A \in \Aut(\Lambda_+)$.
\end{proof}

Next, we would like to understand how $\rank(Ax)$ and $\rank(x)$ are related if $x \in \Lambda_+$ and $A \in \Aut(\Lambda_+)$. 
Unfortunately, in general, 
$\rank(x) \neq \rank(Ax)$.
\begin{example}[Automorphisms do not necessarily preserve the hyperbolic rank]\label{ex:rank}
	We have $\Re_+^3 = \Lambda_+(\tilde p,(1,1,1))$, where $\tilde p(x_1,x_2,x_3) \coloneqq x_1^2 x_2 x_3$.
	Let $x = (1,0,0)$ and $y = (0,1,0)$. Then the eigenvalue vectors of 
	$x$ and $y$ with respect to $\tilde p$ are $\lambda(x) = (1,1,0,0)$ and 
	$\lambda(y) = (0,0,1,0)$. 
	Therefore, $\rank(x) = 2$ and $\rank(y) = 1$. With that, the linear map $A:\Re^3 \to \Re^3$ that exchanges $x_1$ and $x_2$ and fixes $x_3$ is an automorphism of $\Re_+^3$ such that $Ax = y$ and $\rank(x) \neq \rank(Ax)$.
\end{example}
In Example~\ref{ex:rank} the rank is not preserved under automorphisms because of redundancies in how $\Re_+^3$ is realized as a hyperbolicity cone. If we had used $p(x_1,x_2,x_3)  \coloneqq x_1x_2x_3$ there would be no such problem.

\begin{proposition}\label{prop:aut_rank}
	Suppose that $\Lambda_+ = \Lambda_+(p,e)$ where $p$ is of minimal degree and let $A \in \Aut(\Lambda_+)$. If $x \in \Lambda_+$ and $A \in \Aut(\Lambda_+)$, then $\rank(Ax) = \rank(x)$.	
\end{proposition}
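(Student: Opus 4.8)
The plan is to reduce everything to the direction-independence of the multiplicity function, combined with Proposition~\ref{prop:aut}. Since $A \in \Aut(\Lambda_+)$ and the automorphism group is a group, we also have $A^{-1} \in \Aut(\Lambda_+)$; in particular $A^{-1}$ is a linear bijection carrying $\Lambda_+$ onto itself, hence it preserves relative interiors, so from $e \in \reInt \Lambda_+$ we obtain $A^{-1}e \in \reInt \Lambda_+$. Because $p$ is of minimal degree, Proposition~\ref{prop:aut} furnishes a constant $\kappa > 0$ with $p = \kappa\,(p \circ A)$, equivalently $p(Ay) = \kappa^{-1} p(y)$ for all $y \in \Re^n$.

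Next I would relate the eigenvalues of $Ax$, computed along $e$, to the eigenvalues of $x$ computed along the direction $A^{-1}e$. Writing $te - Ax = A\bigl(t\,A^{-1}e - x\bigr)$ and applying the scaling identity gives
\[
p(te - Ax) = p\bigl(A(t\,A^{-1}e - x)\bigr) = \kappa^{-1}\, p\bigl(t\,A^{-1}e - x\bigr),
\]
so the univariate polynomials $t \mapsto p(te - Ax)$ and $t \mapsto p(t\,A^{-1}e - x)$ differ only by the positive factor $\kappa^{-1}$ and therefore have exactly the same roots with the same multiplicities. In particular, the multiplicity of $0$ as a root of the left-hand polynomial, which is $\mult(Ax)$, equals the multiplicity of $0$ as a root of the right-hand polynomial, which is the multiplicity of zero as an eigenvalue of $x$ when the hyperbolicity direction $A^{-1}e \in \reInt \Lambda_+$ is used.

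Finally, I would invoke the fact (recorded in the paragraph on the rank function, citing \cite[Proposition~22]{Re06}) that $\mult$ does not depend on the choice of hyperbolicity direction within $\reInt \Lambda_+$: the multiplicity of zero as an eigenvalue of $x$ computed along $A^{-1}e$ agrees with $\mult(x)$ computed along $e$. Chaining the two equalities yields $\mult(Ax) = \mult(x)$, and hence $\rank(Ax) = \deg p - \mult(Ax) = \deg p - \mult(x) = \rank(x)$. I do not anticipate a real obstacle here; the only point needing care is to avoid trying to match the individual eigenvalues of $Ax$ and $x$ along a common direction — which is false in general, as Example~\ref{ex:rank} illustrates — and instead to track only the multiplicity of the zero eigenvalue, which is the direction-independent quantity. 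The minimality hypothesis on $p$ is essential: it is exactly what licenses the application of Proposition~\ref{prop:aut}, and Example~\ref{ex:rank} shows the conclusion can fail without it.
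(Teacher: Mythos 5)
Your proof is correct and follows essentially the same route as the paper's: both invoke Proposition~\ref{prop:aut} to get the scaling identity $p = \kappa\,(p\circ A)$ and then appeal to the direction-independence of the rank/multiplicity function from \cite[Proposition~22]{Re06}. The only cosmetic difference is that you compare the eigenvalues of $Ax$ along $e$ with those of $x$ along $A^{-1}e$, whereas the paper compares the eigenvalues of $x$ along $e$ with those of $Ax$ along $Ae$; the two are mirror images of the same argument.
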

\begin{proof}
Proposition~\ref{prop:aut} tells us that there exists a positive constant $\kappa$ such that 
\[
p = \kappa (p\circ A).
\] 
This implies that $t \mapsto p(Ax -tAe)$ and  $t \mapsto p(x -te)$ have the same number of positive roots.
Furthermore, the rank does not change if another relative interior point is chosen as direction of hyperbolicity (\cite[Proposition~22]{Re06}), 
so the number of positive roots of 
$t \mapsto p(Ax -tAe)$ and 
$t \mapsto p(Ax -te)$ coincide. 
Therefore, $\rank(x) = \rank(Ax)$.
\end{proof}

\section{ROG hyperbolicity cones, derivative relaxations and automorphisms}\label{sec:aut}

In this section, we present a study of ROG hyperbolicity cones and their derivative relaxations. 
We start with some basic properties and examples in order to set the stage for a study of their automorphisms.

\subsection{ROG hyperbolicity cones and their facial structure}

\begin{definition}[Rank-one generated hyperbolicity cones]
	A pointed hyperbolicity cone $\Lambda_+$ is said to be \emph{rank-one generated (ROG) with respect to $p$ and $e$} if $\Lambda_+ = \Lambda_+(p,e)$ and all extreme rays of $\Lambda_+$ are generated by rank $1$  elements (computed with respect to $p$ and $e$). 
\end{definition}
When it is clear from the context we will simply say that \emph{$\Lambda_+$ is ROG} and omit the reference to $p$ and $e$.
\begin{remark}
	Being ROG is a property that depends on the choice of $p$. For example, let $p$ and $\tilde p$ be such that $p(x_1,x_2,x_3) \coloneqq x_1 x_2 x_3$ and 
	$ \tilde p(x_1,x_2,x_3) \coloneqq x_1^2 x_2 x_3$.
	Then, $\Re_+^3 = \Lambda_+(p,(1,1,1))=\Lambda_+(\tilde p,(1,1,1))$ and $\Re_+^3$ is ROG with 
	respect to $p$ but not with respect to $\tilde p$. 
\end{remark}

We say that a closed convex cone $\stdCone$ is \emph{strictly convex}
if its only faces are $\{0\}, \stdCone$ and extreme rays. 
We have the following lemma.

\begin{lemma}[Extreme rays and strictly convex faces]\label{lem:rank}
	Suppose that $\Lambda_+=\Lambda_+(p,e)$ is regular and ROG with respect to $p$.
	Then the following assertions hold.
	\begin{enumerate}[$(i)$]
		\item\label{lem:rank:1} If $\stdFace \face  \Lambda_+$ is a face of rank one, then $\stdFace$ is an extreme ray.
		\item \label{lem:rank:2} If $x,y \in \Lambda_+$ are linearly independent and have rank $1$ then $\rank(x+y) = 2$.
		\item\label{lem:rank:3} Let $\stdFace \face \Lambda_+$ be a face such that $\dim \stdFace \geq 2$.
		Then, $\stdFace$ is strictly convex if and only if 
		$\rank (\stdFace) = 2$.
	\end{enumerate}
	
\end{lemma}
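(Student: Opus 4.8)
The plan is to prove the three items in order, leaning on Proposition~\ref{prop:rank} throughout, and to use the ROG hypothesis precisely at the points where we need to pass between ``rank one'' and ``extreme ray''.

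For item~\ref{lem:rank:1}, I would take a face $\stdFace$ with $\rank(\stdFace)=1$ and an element $x\in\reInt\stdFace$, so $\rank(x)=1$ by Proposition~\ref{prop:rank}\ref{prop:rank:2}. Since $\Lambda_+$ is regular, in particular pointed, $\stdFace$ is a pointed closed convex cone and hence has at least one extreme ray, say the ray generated by some $z\neq 0$; this extreme ray is a face of $\Lambda_+$ as well (faces of faces are faces). If $\stdFace$ were not itself an extreme ray, then the extreme ray $\Re_+z$ would be a proper subface, and Proposition~\ref{prop:rank}\ref{prop:rank:3} would force $\rank(\Re_+z)<\rank(\stdFace)=1$, i.e.\ $\rank(z)=0$, contradicting Proposition~\ref{prop:rank}\ref{prop:rank:4} since $z\neq 0$. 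Hence $\stdFace$ is an extreme ray. (Note this argument does not even need the ROG hypothesis; but it is the natural first step.)

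For item~\ref{lem:rank:2}, let $x,y$ be linearly independent of rank $1$. By Proposition~\ref{prop:rank}\ref{prop:rank:1}, $\rank(x+y)\le 2$. It cannot be $0$ since $x+y\neq 0$ (pointedness plus linear independence rules out $x+y=0$), and by Proposition~\ref{prop:rank}\ref{prop:rank:4} rank $0$ would force $x+y=0$ anyway. Suppose for contradiction $\rank(x+y)=1$. Consider $\minFacePoint{x}{y}=\stdFace(x,y)$, the minimal face containing $x$ and $y$; by Lemma~\ref{lem:min_face} this equals $\stdFace(x+y)$, the minimal face containing $x+y$. Since $x+y\in\reInt\stdFace(x+y)$ and $\rank(x+y)=1$, we get $\rank(\stdFace(x+y))=1$ by Proposition~\ref{prop:rank}\ref{prop:rank:2}. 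By item~\ref{lem:rank:1}, $\stdFace(x,y)=\stdFace(x+y)$ is then an extreme ray, hence one-dimensional; but it contains the linearly independent pair $x,y$, a contradiction. Therefore $\rank(x+y)=2$. (Again, only item~\ref{lem:rank:1} and Lemma~\ref{lem:min_face} are used, not ROG directly — though item~\ref{lem:rank:1} used regularity.)

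For item~\ref{lem:rank:3}, fix a face $\stdFace\face\Lambda_+$ with $\dim\stdFace\ge 2$. For the ``only if'' direction: if $\rank(\stdFace)\ge 3$, pick $x\in\reInt\stdFace$ with $\rank(x)=\rank(\stdFace)$; because $\Lambda_+$ is ROG, $\stdFace$ is itself a regular (in its span) ROG hyperbolicity cone with extreme rays of rank one, and $x$ is a positive combination of finitely many extreme-ray generators $x_1,\dots,x_r$, each of rank $1$. By Lemma~\ref{lem:min_face}, $\stdFace=\stdFace(x)=\stdFace(x_1,\dots,x_r)$. One can then pick a subfamily whose sum $z$ has $\rank(z)=2$: indeed $\rank(x_1)=1$, and by item~\ref{lem:rank:2} adding a generator linearly independent from the previous ones strictly increases the rank by exactly $1$ until it reaches $2$, so partial sums realize rank $1$ and rank $2$; let $z$ be a partial sum of rank $2$ and $w$ a partial sum of rank $1$, with $\stdFace(z)\subsetneq\stdFace(x_1,\dots,x_r)=\stdFace$ a proper subface (proper because its rank $2<\rank(\stdFace)$, using Proposition~\ref{prop:rank}\ref{prop:rank:3}), and $\stdFace(w)\subsetneq\stdFace(z)$ proper for the same reason. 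This exhibits a chain $\{0\}\subsetneq\stdFace(w)\subsetneq\stdFace(z)\subsetneq\stdFace$ with $\stdFace(w)$ of dimension... here I must be careful: $\stdFace(w)$ has rank $1$ so by item~\ref{lem:rank:1} it is an extreme ray, while $\stdFace(z)$ has rank $2$ and is neither $\{0\}$, nor an extreme ray, nor all of $\stdFace$, so $\stdFace$ is not strictly convex. For the ``if'' direction: suppose $\rank(\stdFace)=2$ and let $G$ be any face with $\{0\}\subsetneq G\subsetneq\stdFace$. Then $1\le\rank(G)<\rank(\stdFace)=2$ by Proposition~\ref{prop:rank}\ref{prop:rank:3} and \ref{prop:rank:4}, so $\rank(G)=1$, whence $G$ is an extreme ray by item~\ref{lem:rank:1}. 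Thus the only faces of $\stdFace$ are $\{0\}$, $\stdFace$, and extreme rays, i.e.\ $\stdFace$ is strictly convex. Finally, the case $\rank(\stdFace)\le 1$: if $\dim\stdFace\ge 2$ then $\rank(\stdFace)=1$ would make $\stdFace$ an extreme ray by item~\ref{lem:rank:1}, contradicting $\dim\stdFace\ge 2$; and $\rank(\stdFace)=0$ forces $\stdFace=\{0\}$ by Proposition~\ref{prop:rank}\ref{prop:rank:4}. So under $\dim\stdFace\ge 2$ we automatically have $\rank(\stdFace)\ge 2$, and the equivalence in item~\ref{lem:rank:3} reduces exactly to ``$\rank(\stdFace)=2$ versus $\rank(\stdFace)\ge 3$''.

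\textbf{Main obstacle.} The delicate point is the ``only if'' direction of item~\ref{lem:rank:3}: one must show that whenever $\rank(\stdFace)\ge 3$ there genuinely exists a face of rank exactly $2$ strictly inside $\stdFace$. The clean way is the partial-sum argument above — writing $x\in\reInt\stdFace$ as a sum of rank-one extreme generators (using ROG and Carathéodory-type reasoning inside the pointed cone $\stdFace$), then using item~\ref{lem:rank:2} to see that the ranks of the partial sums climb by unit steps from $1$ and therefore hit $2$ before exceeding it — and then invoking Lemma~\ref{lem:min_face} to identify the minimal faces of those partial sums as genuine subfaces with the predicted ranks. I would double-check that a face of a regular ROG hyperbolicity cone is again ``ROG-like'' enough for this (it is pointed, and its extreme rays are extreme rays of $\Lambda_+$, hence rank one), so that the decomposition of $x$ into rank-one generators is legitimate.
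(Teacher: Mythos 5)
Your proof is correct and follows essentially the same route as the paper's: items \ref{lem:rank:1} and \ref{lem:rank:2} use Proposition~\ref{prop:rank} and Lemma~\ref{lem:min_face} exactly as in the paper, and item \ref{lem:rank:3} rests on the same facts. The only cosmetic difference is in the ``only if'' direction of \ref{lem:rank:3}, where the paper argues directly that for a strictly convex $\stdFace$ the sum of two linearly independent extreme generators lies in $\reInt\stdFace$ (so $\rank(\stdFace)=2$ by item \ref{lem:rank:2}), while you prove the contrapositive by exhibiting a proper rank-$2$ subface when $\rank(\stdFace)\geq 3$ --- both are valid and essentially equivalent.
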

\begin{proof}
	\begin{enumerate}[$(i)$]
		\item 
		Suppose that $\stdFace$ is not an extreme ray. 
		Then, $\stdFace$ must contain at least one  extreme ray $\hat \stdFace$ such that $\hat \stdFace$ is properly contained in $\stdFace$.  $\hat \stdFace$ is a face as well, so
		by item~\ref{prop:rank:3} of Proposition~\ref{prop:rank}, we must have 	$\rank(\hat \stdFace) = 0$.
		Since $\Lambda_+$ is pointed this implies that $\hat \stdFace = \{0\}$ by item~\ref{prop:rank:4} of Proposition~\ref{prop:rank}, which  contradicts the fact that $\hat \stdFace$ should have dimension $1$.
		\item By item~\ref{prop:rank:1} of Proposition~\ref{prop:rank}, $\rank(x+y) \leq 2$. 
		We have 
		$x+y \in \reInt \stdFace(x+y)$, where we recall that $\stdFace(x+y)$
		is  the minimal face of $\Lambda_+$ containing $x+y$. 
		With that, we have $\rank(\stdFace(x+y)) \leq 2$ by item~\ref{prop:rank:2} of Proposition~\ref{prop:rank}.
		By Lemma~\ref{lem:min_face}, we have $\stdFace(x+y) = \stdFace(x,y)$, so $\stdFace(x+y)$ has at least two distinct extreme rays, since $x,y$ are linearly independent.
		In view of  \ref{lem:rank:1}, the rank of $\stdFace(x+y)$
	cannot be $1$.

		So, $\rank \stdFace(x,y) = 2$ and since $x+y \in \reInt \stdFace(x+y)$, 
		Proposition~\ref{prop:rank} tells us that indeed $\rank(x+y) = 2$.
		
		\item If $\stdFace$ is strictly convex, then the sum of two linearly independent extreme rays is in 
		$\reInt \stdFace$ and it must have rank $2$ by item~\ref{lem:rank:2}.
		Conversely, if $\stdFace \face \Lambda_+$ has rank $2$, by item~\ref{prop:rank:3} of Proposition~\ref{prop:rank},  the boundary faces of $\stdFace$ must be of rank $1$ or $0$, which are extreme rays (by item~\ref{lem:rank:1}) or the face $\{0\}$ (by item~\ref{prop:rank:4} of Proposition~\ref{prop:rank}), respectively.
	\end{enumerate}
\end{proof}

Our next task is showing that if $\Lambda_+$ is ROG with respect to $p$ and $e$, then $p$ must already be the minimal degree polynomial for $\Lambda_+$ and this property gets propagated to the derivative relaxations.
First, we prove the following result which is a slightly refined version of \cite[Proposition~2.2]{Sa20}.
\begin{proposition}\label{prop:chain2}
	Suppose $\Lambda_+ = \Lambda_+(p,e)$ is  regular and ROG with respect to $p$. Suppose $\stdFace \face \Lambda_+$ is a face of rank $r$ and let $\hat \stdFace$ be any extreme ray of $\stdFace$.
	Then, there exists a chain of faces of $\Lambda_+$ with  length $r+1$  such that 
	$\{0\} = \stdFace_0 \subsetneq \stdFace_1 = \hat \stdFace \subsetneq \stdFace_2 \subsetneq \cdots \subsetneq \stdFace_{r-1} \subsetneq \stdFace_{r} = \stdFace$ {and $\rank(F_i) = i$ ($i=0,1,\ldots,r$)}.
\end{proposition}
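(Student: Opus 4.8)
The plan is to build the chain from the bottom up as the sequence of faces generated by the partial sums of a finite rank-one decomposition of a relative interior point of $\stdFace$. The starting point is that, since $\Lambda_+$ is regular, every face $\stdFace \face \Lambda_+$ is a pointed closed convex cone; by transitivity of the face relation its extreme rays are extreme rays of $\Lambda_+$, hence rank one by the ROG hypothesis, and (via a compact base together with Minkowski's theorem) every element of $\stdFace$ is a \emph{finite} conical combination of such rank-one generators. The two tools that then do all the work are item~\ref{prop:rank:1} of Proposition~\ref{prop:rank} ($\rank(z_1+z_2)\le \rank(z_1)+\rank(z_2)$) and Lemma~\ref{lem:min_face} ($\stdFace(z_1,\dots,z_j)=\stdFace(z_1+\cdots+z_j)$); together with item~\ref{prop:rank:2} of Proposition~\ref{prop:rank} they show that, for a rank-one decomposition $x=w_1+\cdots+w_m$ of $x\in\reInt\stdFace$, the faces $\stdFace(w_1+\cdots+w_j)$ form an increasing chain whose rank rises by at most one per step.

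\textbf{Construction.}
The statement is vacuous when $r=0$ (then $\stdFace=\{0\}$ has no extreme rays), so assume $r\ge 1$. Pick $x\in\reInt\stdFace$, so $\rank(x)=\rank(\stdFace)=r$ by item~\ref{prop:rank:2} of Proposition~\ref{prop:rank}, and let $x_1$ generate the prescribed extreme ray $\hat\stdFace$. Since $\reInt\stdFace$ is open in $\spanVec\stdFace$ and $x_1\in\spanVec\stdFace$, there is $\delta>0$ with $x-\delta x_1\in\stdFace$; set $w_1\coloneqq\delta x_1$, which has rank one and satisfies $\stdFace(w_1)=\hat\stdFace$. If $x=w_1$ then $\stdFace=\hat\stdFace$, $r=1$, and the chain $\{0\}\subsetneq\hat\stdFace$ already works; otherwise decompose $x-w_1\in\stdFace$ into rank-one generators $w_2,\dots,w_m$ as above, so that $x=w_1+\cdots+w_m$ with each $w_i$ of rank one and $w_1$ generating $\hat\stdFace$. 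Put $G_j\coloneqq\stdFace(w_1+\cdots+w_j)=\stdFace(w_1,\dots,w_j)$ for $1\le j\le m$; then $G_1=\hat\stdFace$, $G_m=\stdFace(x)=\stdFace$, and $G_1\subseteq G_2\subseteq\cdots\subseteq G_m$.

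\textbf{Reading off the ranks.}
Since $w_1+\cdots+w_j\in\reInt G_j$, item~\ref{prop:rank:2} of Proposition~\ref{prop:rank} gives $\rank(G_j)=\rank(w_1+\cdots+w_j)$, and subadditivity then yields $\rank(G_{j+1})\le\rank(G_j)+1$; moreover $G_j\subseteq G_{j+1}$ forces $\rank(G_j)\le\rank(G_{j+1})$, strictly so whenever $G_j\subsetneq G_{j+1}$ by item~\ref{prop:rank:3} of Proposition~\ref{prop:rank}. Hence, after discarding repetitions, the distinct members of $G_1\subseteq\cdots\subseteq G_m$ form a strictly increasing chain along which the rank increases by exactly one at each step, starting at $\rank(\hat\stdFace)=1$ and ending at $\rank(\stdFace)=r$; in particular there are exactly $r$ of them, with ranks $1,2,\dots,r$. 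Relabelling them $\stdFace_1=\hat\stdFace\subsetneq\stdFace_2\subsetneq\cdots\subsetneq\stdFace_r=\stdFace$ and adjoining $\stdFace_0\coloneqq\{0\}$ produces the asserted chain of length $r+1$ with $\rank(\stdFace_i)=i$.

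\textbf{Main obstacle.}
The only step that is not pure bookkeeping with the rank function is the existence of the finite rank-one decomposition of a point of $\stdFace$, together with the freedom to take the first summand $w_1$ along the prescribed extreme ray $\hat\stdFace$. I expect the bulk of the written-out proof to go into making this precise (a compact base of the pointed cone $\stdFace$, Minkowski's theorem, transitivity of the face relation so that extreme rays of $\stdFace$ are extreme rays of $\Lambda_+$, and the ROG hypothesis to pin their rank to one), with the degenerate case $r=1$ — where $\hat\stdFace=\stdFace$ already by item~\ref{lem:rank:1} of Lemma~\ref{lem:rank} — folded in as the shortest instance of the construction.
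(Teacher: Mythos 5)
Your proof is correct, and it rests on exactly the same machinery as the paper's: Lemma~\ref{lem:min_face} to identify $\stdFace(w_1,\dots,w_j)$ with $\stdFace(w_1+\cdots+w_j)$, rank subadditivity (item~\ref{prop:rank:1} of Proposition~\ref{prop:rank}) to bound each increment by one, and strict monotonicity of rank along proper face inclusions (item~\ref{prop:rank:3}) to force each increment to be exactly one. The only real difference is organizational: the paper builds the chain greedily, at each step choosing an extreme ray of $\stdFace$ \emph{not} contained in the current face, so strictness of the inclusions is immediate by construction and the chain terminates by appending $\stdFace$ once the rank hits $r-1$; you instead fix $x\in\reInt\stdFace$, produce a finite rank-one decomposition of $x$ via a compact base and Minkowski's theorem (with the $\delta$-perturbation trick to place the first summand on the prescribed ray $\hat\stdFace$), and then discard repetitions in the resulting weakly increasing chain. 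What your route buys is that the top of the chain $G_m=\stdFace(x)=\stdFace$ comes for free; what it costs is the explicit appeal to the extreme-ray decomposition of a specific interior point and the deduplication step, both of which the paper's greedy selection sidesteps (though the paper, too, implicitly uses that a pointed closed cone is generated by its extreme rays when it asserts that a proper subface must miss some extreme ray of $\stdFace$). Both arguments are sound and of essentially the same length.
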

\begin{proof}
	 The result is clear for faces of rank $r \leq 1$. So suppose that $\stdFace$ has rank $r \geq 2$ and we will build a chain of faces from the bottom up. 
	Let $x_1 \in \stdFace$ be such that $x_1$ generates the  extreme ray $\hat \stdFace$ and set $\stdFace_0 \coloneqq \{0\}, \stdFace_1 \coloneqq \stdFace(x_1) = \hat \stdFace$.
	It is clear that $\stdFace_0 \subsetneq \stdFace_1$ by $\dim(\stdFace_0)=0\ne 1 = \dim(\stdFace_1)$.
	
	Then, for $i \geq 2$  we proceed as follows. If $\rank(\stdFace_{i-1}) = r-1$, we let $\stdFace_i \coloneqq  \stdFace$ and we stop. Otherwise, since $\stdFace_{i-1} \neq \stdFace$, there must be an extreme ray $x_{i}$ of $\stdFace$ not contained in $\stdFace_{i-1}$, so we let 
	$\stdFace_{i} \coloneqq \stdFace(x_1,\ldots, x_{i})$. Since $x_{i} \not \in \stdFace_{i-1}$, we have 
	\[\stdFace_{i-1} \subsetneq \stdFace_{i}.\] 
	By construction, the rank of $\stdFace_{i-1}$ is ${i-1}$ and we will show that the rank of $\stdFace_{i}$ is $i$. 
	Now, by Lemma~\ref{lem:min_face} we have 
	\[
	\stdFace(x_1+\cdots +x_{i}) = \stdFace(x_1, \ldots, x_{i}),
	\]
	Furthermore, $x_1+\cdots +x_{i} \in \reInt \stdFace(x_1+\cdots + x_i) = \reInt \stdFace_i$, so, by item~\ref{prop:rank:2} of Proposition~\ref{prop:rank}, it suffices to compute the rank of 
	$x_1+\cdots +x_{i}$. Item~\ref{prop:rank:1} of Proposition~\ref{prop:rank} tells us that
	\[
	\rank(x_1+\cdots +x_{i}) \leq \rank(x_1+\cdots +x_{i-1}) + \rank(x_{i}) = i.
	\]
	However, 
	$\rank(x_1+\cdots +x_{i})$ is at least $i-1$, since $\rank(x_1+\cdots +x_{i}) = \rank(\stdFace_{i})$ and $\stdFace_{i}$ contains $\stdFace_{i-1}$.
	
	For the sake of obtaining a contradiction suppose that $\rank(x_1+\cdots +x_{i}) = i-1$ holds. Then, we would have $\rank(\stdFace_{i}) = \rank(\stdFace_{i-1})$, which would imply that 
	$x_1+\cdots + x_{i-1} \in \reInt \stdFace_{i}$, by item~\ref{prop:rank:2} of Proposition~\ref{prop:rank}.
	In particular, $(\reInt \stdFace_{i}) \cap (\reInt \stdFace_{i-1}) \neq \emptyset$  would hold which leads to $\stdFace_{i} = \stdFace_{i-1}$, by \eqref{eq:faces_eq}.
	This contradicts
	$\stdFace_{i-1} \subsetneq \stdFace_{i}$.
	
	Therefore,  $\rank(x_1+\cdots +x_{i}) = i$ and $\stdFace_{i}$ is indeed a face of rank $i$. We conclude that each time a new face is added to the chain, the rank increases by exactly one.  Furthermore, we can always keep adding a new face as long as $\stdFace_{i} \neq \stdFace$. Since $\stdFace$ has rank $r$, this leads to a chain of faces of length exactly $r+1$.
\end{proof}

%

\begin{proposition}[Minimal polynomial of ROG cones and their derivative relaxations]\label{prop:strict}
	Suppose that $\Lambda_+(p,e)$ is regular and ROG with respect to $p$.
	Then the following items hold.
	\begin{enumerate}[$(i)$]
		\item \label{prop:strict:1} $p$ is a minimal degree polynomial for $\Lambda_+(p,e)$.
		\item \label{prop:strict:2} $D_{e}^kp$ is a minimal degree polynomial for $\Lambda_+^{(k)}(p,e)$ for all $1\leq k\leq d-2$
		\item \label{prop:strict:3} For all $1 \leq k \leq d-2$, $\Lambda_+^{(k-1)}$ is strictly contained in $\Lambda_+^{(k)}$.
	\end{enumerate}
\end{proposition}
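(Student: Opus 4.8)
The plan is to prove the three items in order; the first is the crux and the others follow quickly from it. Throughout write $d \coloneqq \deg p$, and for a polynomial $r$ hyperbolic along $e$ let $\rank_r$ and $\mult_r$ denote the rank and multiplicity functions computed with respect to $r$, so that $\rank_r(x) + \mult_r(x) = \deg r$ for $x \in \Lambda_+(r,e)$.

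For item (i), let $q$ be a minimal degree polynomial for $\Lambda_+ = \Lambda_+(p,e)$; by Proposition~\ref{prop:hv} we may write $p = qh$ with $h$ strictly positive on a dense connected subset of $\Lambda_+$, and it suffices to show $\deg h = 0$. Since $p$ and $q$ are hyperbolic along $e$ and, for every $x$, the roots of $t \mapsto h(te-x)$ form a sub-multiset of those of $t \mapsto p(te-x) = q(te-x)\,h(te-x)$, the polynomial $h$ is hyperbolic along $e$ as well (note $h(e) = p(e)/q(e) > 0$), and the $p$-eigenvalues of $x$ are, counting multiplicity, the $q$-eigenvalues of $x$ together with the $h$-eigenvalues of $x$; hence $\rank_p(x) = \rank_q(x) + \rank_h(x)$. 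Now let $x$ generate an extreme ray of $\Lambda_+$. The ROG hypothesis gives $\rank_p(x) = 1$, and since $\Lambda_+ = \Lambda_+(q,e)$ is pointed, \cite[Proposition~11]{Re06} gives $\rank_q(x) \ge 1$; therefore $\rank_h(x) = 0$, so by \cite[Proposition~11]{Re06} again $x$ lies in the lineality space of $\Lambda_+(h,e)$. As $\Lambda_+$ is regular, it is the conic hull of its extreme rays, so these generators span $\Re^n$; thus the lineality space of $\Lambda_+(h,e)$ equals $\Re^n$, i.e.\ $\Lambda_+(h,e) = \Re^n$. A nonzero homogeneous polynomial whose hyperbolicity cone is $\Re^n$ must be constant --- otherwise $-e \in \Lambda_+(h,e)$ would force the root $-1$ of $t \mapsto h((t+1)e) = h(e)(t+1)^{\deg h}$ to be nonnegative --- so $\deg h = 0$.

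For item (ii), fix $k$ with $1 \le k \le d-2$. First, every extreme ray of $\Lambda_+$ is an extreme ray of $\Lambda_+^{(k)}$: if $x$ lies in the relative interior of such a ray, then $\mult_p(x) = d-1$ by the ROG hypothesis, so Lemma~\ref{lem:face-mul} shows the ray is a face of $\Lambda_+^{(d-2)}$; being contained in $\Lambda_+^{(0)} \subseteq \Lambda_+^{(k)} \subseteq \Lambda_+^{(d-2)}$, it is therefore a one-dimensional face, hence an extreme ray, of $\Lambda_+^{(k)}$. Second, I compute $\rank_{D_e^k p}(x)$ for such an $x$: setting $g(s) \coloneqq p(se-x)$, one has $D_e^k p(se-x) = g^{(k)}(s)$ (immediate from $D_e^k p(y) = \frac{d^k}{dt^k}p(y+te)|_{t=0}$), and since $\rank_p(x) = 1$ we have $g(s) = p(e)\,s^{d-1}(s - \lambda_1(x))$ with $\lambda_1(x) > 0$; differentiating $k$ times --- using $d-1-k \ge 1$ --- gives $g^{(k)}(s) = p(e)\,s^{d-1-k}\bigl(c_1 s - c_2\,\lambda_1(x)\bigr)$ for some constants $c_1, c_2 > 0$, so the $D_e^k p$-eigenvalues of $x$ are $d-1-k$ copies of $0$ and one positive number, i.e.\ $\rank_{D_e^k p}(x) = 1$. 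Now we rerun the argument of item (i) with $D_e^k p$ in place of $p$ and $\Lambda_+^{(k)}$ in place of $\Lambda_+$: writing $D_e^k p = qh$ with $q$ a minimal polynomial for $\Lambda_+^{(k)} = \Lambda_+(D_e^k p,e)$ and $h$ hyperbolic along $e$, for each generator $x$ of an extreme ray of $\Lambda_+$ we get $1 = \rank_{D_e^k p}(x) = \rank_q(x) + \rank_h(x)$ with $\rank_q(x) \ge 1$ (the cone $\Lambda_+^{(k)}$ is pointed, having the extreme rays of $\Lambda_+$ among its faces), so $\rank_h(x) = 0$ and $x$ lies in the lineality space of $\Lambda_+(h,e)$; these $x$ span $\Re^n$, so $h$ is constant just as before, i.e.\ $D_e^k p$ is a minimal degree polynomial for $\Lambda_+^{(k)}$.

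For item (iii), $\Lambda_+^{(k-1)} \subseteq \Lambda_+^{(k)}$ always holds by \eqref{eq:hyp_deriv}; if equality held for some $1 \le k \le d-2$, then $D_e^k p$, which has degree $d-k$, would generate $\Lambda_+^{(k-1)}$, contradicting the fact --- established in item (i) when $k=1$ and in item (ii) when $k \ge 2$ --- that a minimal degree polynomial for $\Lambda_+^{(k-1)}$ is $D_e^{k-1}p$, of the strictly larger degree $d-k+1$. Hence the inclusion is strict. I expect item (ii) to be the main obstacle: one must simultaneously show (a) that the rank-one extreme rays of $\Lambda_+$ survive as extreme rays of every $\Lambda_+^{(k)}$ in the stated range, which is exactly where Lemma~\ref{lem:face-mul} and the bound $k \le d-2$ are used, and (b) that these generators remain rank one with respect to $D_e^k p$, which needs the explicit $k$-fold differentiation above; granting both, the Helton--Vinnikov factorization argument from item (i) carries over.
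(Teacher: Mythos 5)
Your proof is correct, but for items (i) and (ii) it takes a genuinely different route from the paper's. The paper argues combinatorially: Proposition~\ref{prop:chain2} produces a chain of faces of $\Lambda_+$ of length $d+1$ along which the rank strictly increases (item~(iii) of Proposition~\ref{prop:rank}), and since the rank with respect to any defining polynomial of degree $\tilde d$ cannot exceed $\tilde d$, minimality of $p$ follows; for item (ii) the paper pushes the tail of that chain into $\Lambda_+^{(k)}$ via Lemma~\ref{lem:face-mul} to obtain a chain of length $d-k+1$ there. You instead work algebraically with the Helton--Vinnikov factorization: writing $p=qh$ with $q$ minimal, you use the additivity of eigenvalue multisets under factorization so that each rank-one extreme-ray generator forces $\rank_h=0$, which places the (spanning) set of generators in the lineality space of $\Lambda_+(h,e)$ and hence kills $h$; for item (ii) you supplement this with the explicit computation $g^{(k)}(s)=p(e)s^{d-1-k}(c_1s-c_2\lambda_1)$ showing that extreme-ray generators of $\Lambda_+$ stay rank one with respect to $D_e^kp$ and survive as extreme rays of $\Lambda_+^{(k)}$ (this is where Lemma~\ref{lem:face-mul} and $k\leq d-2$ enter for you). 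Both arguments are sound; the paper's buys reusability, since Proposition~\ref{prop:chain2} is needed again in Proposition~\ref{prop:faces_are_rog} and Theorem~\ref{theo:hyper_aut}, while yours is more self-contained here, makes the role of Proposition~\ref{prop:hv} explicit, and records the extra fact that the old extreme rays remain rank one in the derivative cones. Item (iii) is argued identically in both. One line worth adding: your parenthetical claim that $\Lambda_+^{(k)}$ is pointed because it has a one-dimensional face $\Re_+x$ is true but deserves the short justification that $v,-v\in\Lambda_+^{(k)}$ gives $v+(-v)=0\in\Re_+x$, hence $v\in\Re_+x\cap(-\Re_+x)=\{0\}$.
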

\begin{proof}
	Let $d$ be the degree of $p$. 
	Then, taking  $\stdFace = \Lambda_+(p,e)$ in
	Proposition~\ref{prop:chain2} and letting $\hat \stdFace$ be an arbitrary extreme ray, there exists a 
	chain of faces of length $d+1$
	\[
	\{0\} = \stdFace_0 \subsetneq \stdFace_1 = \hat\stdFace \subsetneq \cdots \subsetneq \stdFace_d = \Lambda_+(p,e),
	\]
	{such that $\rank(F_i)=i$}.
	Now, suppose that there is some hyperbolic polynomial $\tilde p$ of degree $\tilde d \leq d$ such that $\Lambda_+(p,e) = \Lambda_+(\tilde p,e)$.
	{In view of item~\ref{prop:rank:3} of Proposition~\ref{prop:rank}}, the longest possible chain of faces  that $\Lambda_+(p,e) = \Lambda_+(\tilde p,e)$ can have has length at most $\tilde d +1$, which implies that
	$d+1 \leq \tilde d +1$. This proves $d=\tilde{d}$ and so item~\ref{prop:strict:1} holds.
	
	Next, we fix $k$ satisfying $1 \leq k \leq d-2$ and put $i\coloneqq d-(k+1) \geq 1$.
	Since $\rank(\stdFace_i) = i$ we have $\mult(x)=d-i=k+1$ for any $x \in \reInt \stdFace_i$, so by Lemma~\ref{lem:face-mul},
	$\stdFace_{i}$ is a boundary face of $\Lambda_+^{(d-i)-1}(p,e) = \Lambda_+^{(k)}(p,e)$.
	Then, since $\stdFace_0,\stdFace_1,\ldots,\stdFace_{i-1}$ are faces of $\stdFace_i$, they are faces of $\Lambda_+^{(k)}(p,e)$ as well.
 We have thus proved that 
	$\Lambda_+^{(k)}(p,e)$ contains the following chain of faces
	\[
	\stdFace_0 \subsetneq \stdFace_1 \subsetneq \stdFace_2 \subsetneq \cdots \subsetneq \stdFace_{d-(k+1)} \subsetneq \Lambda_+^{(k)}(p,e),
	\]
	which has length $d-k+1$.
	Now, if $q$ is a minimal polynomial for $\Lambda_+^{(k)}(p,e)$, the rank function with respect to $q$ must strictly increase along the chain of faces {by item~\ref{prop:rank:3} of Proposition~\ref{prop:rank}}. So $q$ must have degree at least $d-k$. 
	Since $D_{e}^kp$ has degree $d-k$, it must be minimal as well. 
	This shows 
	item~\ref{prop:strict:2}.
	
	Finally, $\Lambda_+^{(k)}(p,e) = \Lambda_+^{(k-1)}(p,e)$ cannot hold because that would imply that $\Lambda_+(D_{e}^kp,e)= \Lambda_+(D_{e}^{k-1}p ,e)$ and this would contradict the minimality of $D_{e}^kp$. 	 This shows item~\ref{prop:strict:3}.
	\end{proof}

We conclude this subsection with a discussion on how the faces of ROG hyperbolicity cones are also ROG hyperbolicity cones themselves. 
First, let us recall that a face of any hyperbolicity cone is also a hyperbolicity cone.
This is discussed in detail in \cite[Section~3.1]{LRS21}. The basic idea is that if $\stdFace \face \Lambda_+$, $z \in \reInt F$  and $z$ has multiplicity $m$, then differentiating $m$-times the polynomial $p$  along $e$ leads to a hyperbolic polynomial which, when restricted to $\spanVec \stdFace$,
is hyperbolic along $z$ and generates the face $\stdFace$. More precisely, we have the following.

\begin{proposition}[{\cite[Corollary~3.4]{LRS21}}]\label{prop:faces}
	Suppose that $p$ is hyperbolic with respect to $e$ and $\stdFace$ is a face of $\Lambda_+(p,e)$. 
	Let $z\in \reInt(\stdFace)$,  $m=\textup{mult}(z)$ 
	and denote the restriction of  $D_e^{m}p$ to the subspace $\spanVec{\stdFace}$ by $q$.
	Then,  $q$ is hyperbolic with respect to $z$ and $\stdFace = \Lambda_+(q,z)$ holds.
\end{proposition}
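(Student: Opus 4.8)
The plan is to exhibit $\stdFace$ as the hyperbolicity cone of $q$ along $z$ in three moves: first promote $e$ to $z$ as a hyperbolicity direction of $D_e^mp$ on all of $\Re^n$, then restrict $D_e^mp$ to $\spanVec\stdFace$, and finally identify the restricted cone with $\stdFace$. Two boundary cases are dealt with at once: if $m=0$ then $z\in\reInt\Lambda_+$, so $\stdFace=\Lambda_+$ and $q=p$, and the statement is then essentially \cite[Theorem~3]{Re06}; if $\stdFace$ is the lineality space of $\Lambda_+$ then $q$ is a positive constant and all assertions are vacuous. So I assume $1\le m\le d-1$ and that $\stdFace$ is a proper boundary face other than the lineality space, and I normalize $p$ so that $p(e)>0$.

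\emph{Step 1 (change of direction).} I first claim $z\in\reInt\Lambda_+^{(m)}$. Put $g(u)\coloneqq p(ue-z)$, so the eigenvalues of $z$ with respect to $p$ are the roots of $g$; since $\mult(z)=m$, the monomial $s^m$ divides $g(s)$ but $s^{m+1}$ does not. Differentiating $t\mapsto p((s+t)e-z)$ $m$ times and setting $t=0$ gives $D_e^mp(se-z)=g^{(m)}(s)$, so the eigenvalues of $z$ with respect to $D_e^mp$ are the roots of $g^{(m)}$, and since $g^{(m)}(0)\ne 0$ none of them equals $0$; that is, $z$ has multiplicity $0$ with respect to $D_e^mp$. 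As moreover $z\in\Lambda_+=\Lambda_+^{(0)}\subseteq\Lambda_+^{(m)}$, we get $z\in\reInt\Lambda_+^{(m)}$ and, in passing, $D_e^mp(z)>0$. By \cite[Theorem~3]{Re06}, $D_e^mp$ is then hyperbolic along $z$ with $\Lambda_+(D_e^mp,z)=\Lambda_+(D_e^mp,e)=\Lambda_+^{(m)}$.

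\emph{Step 2 (restriction).} Because $z\in\spanVec\stdFace$ and $q(z)=D_e^mp(z)>0$, and because $t\mapsto q(x-tz)=D_e^mp(x-tz)$ has only real roots for every $x\in\spanVec\stdFace$ (as $D_e^mp$ is hyperbolic along $z$ on $\Re^n$), the polynomial $q$ is hyperbolic along $z$; and straight from the definition, $\Lambda_+(q,z)=\Lambda_+(D_e^mp,z)\cap\spanVec\stdFace=\Lambda_+^{(m)}\cap\spanVec\stdFace$.

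\emph{Step 3 (identifying the cone).} It remains to prove $\Lambda_+^{(m)}\cap\spanVec\stdFace=\stdFace$, which I expect to be the crux: a priori $\Lambda_+^{(m)}$ may be much larger than $\Lambda_+$, and there is no obvious reason its trace on $\spanVec\stdFace$ is again $\stdFace$. The key observation is that the restriction of $D_e^{m-1}p$ to $\spanVec\stdFace$ is identically zero. Indeed, by item~\ref{prop:rank:2} of Proposition~\ref{prop:rank} every $w\in\reInt\stdFace$ satisfies $\rank(w)=\rank(\stdFace)=\rank(z)$, hence $\mult(w)=m$, hence $D_e^jp(w)=0$ for $j=0,\dots,m-1$; thus $D_e^{m-1}p$ vanishes on the nonempty relatively open subset $\reInt\stdFace$ of $\affineHull\stdFace=\spanVec\stdFace$, and so on all of $\spanVec\stdFace$. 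Consequently the inequality $D_e^{m-1}p\ge 0$ is vacuous on $\spanVec\stdFace$, and the description \eqref{eq:hyp_deriv} gives $\Lambda_+^{(m-1)}\cap\spanVec\stdFace=\Lambda_+^{(m)}\cap\spanVec\stdFace$. On the other hand, $\stdFace$ is a face of $\Lambda_+^{(m-1)}$ by Lemma~\ref{lem:face-mul}, and every face $G$ of a convex cone $C$ satisfies $C\cap\spanVec G=G$: given $x\in C\cap\spanVec G$ and $w\in\reInt G$, we have $w-\varepsilon x\in G\subseteq C$ for small $\varepsilon>0$, so $w=(w-\varepsilon x)+\varepsilon x$ expresses $w\in G$ as a sum of two elements of $C$, forcing $\varepsilon x\in G$, so that $x\in G$. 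Taking $C=\Lambda_+^{(m-1)}$ and $G=\stdFace$ yields $\Lambda_+^{(m-1)}\cap\spanVec\stdFace=\stdFace$, hence $\Lambda_+^{(m)}\cap\spanVec\stdFace=\stdFace$. Combined with Step 2 this gives $\Lambda_+(q,z)=\stdFace$, completing the proof; the only genuinely delicate point is the vanishing of $D_e^{m-1}p$ on $\spanVec\stdFace$, which is exactly what collapses the derivative cone back onto $\stdFace$.
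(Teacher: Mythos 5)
Your proof is correct. Note that the paper does not actually prove this proposition: it is imported verbatim from \cite[Corollary~3.4]{LRS21}, with only a one-sentence sketch of the idea (``differentiating $m$ times along $e$ and restricting to $\spanVec\stdFace$ gives a hyperbolic polynomial generating $\stdFace$''), so there is no in-paper argument to compare against line by line. Your write-up fills in exactly the details that sketch leaves out, and it does so correctly: Step 1's computation $D_e^mp(se-z)=g^{(m)}(s)$ together with $g(s)=s^mh(s)$, $h(0)\neq 0$, legitimately yields $\mult_{D_e^mp}(z)=0$ and hence $z\in\reInt\Lambda_+^{(m)}$ (via item~\ref{prop:rank:2} of Proposition~\ref{prop:rank} applied to the full cone), Step 2 is a routine restriction, and Step 3 correctly isolates the crux. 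The two ingredients there are both sound: $D_e^{m-1}p$ vanishes on $\reInt\stdFace$ because every relative interior point has multiplicity exactly $m$, hence on all of $\spanVec\stdFace$ by polynomial rigidity, which collapses $\Lambda_+^{(m)}\cap\spanVec\stdFace$ to $\Lambda_+^{(m-1)}\cap\spanVec\stdFace$ via \eqref{eq:hyp_deriv}; and the elementary fact $C\cap\spanVec G=G$ for a face $G\face C$, combined with Lemma~\ref{lem:face-mul}, finishes it. Your case analysis at the outset is also exhaustive ($m=0$ forces $\stdFace=\Lambda_+$, $m=d$ forces $\stdFace$ to be the lineality space, and Lemma~\ref{lem:face-mul} applies precisely in the remaining regime), so nothing is missing.
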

With that, we are ready to prove the following.
\begin{proposition}[Faces of ROG hyperbolicity cones are also ROG]\label{prop:faces_are_rog}
Under the setting of Proposition~\ref{prop:faces}, if $\Lambda_+$ is regular and ROG with respect to $p$ and $e$, then $\stdFace$ is ROG with respect to $q$ and $z$.
\end{proposition}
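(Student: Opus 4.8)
The goal is to show that the face $\stdFace$ of a regular ROG hyperbolicity cone $\Lambda_+(p,e)$, realized via Proposition~\ref{prop:faces} as $\stdFace = \Lambda_+(q,z)$ with $q$ the restriction of $D_e^m p$ to $\spanVec \stdFace$ (where $m = \mult(z)$, $z \in \reInt \stdFace$), is itself ROG with respect to $q$ and $z$. The plan is to take an arbitrary extreme ray $\hat\stdFace$ of $\stdFace$ and show that its generator has rank one when the rank is computed with respect to $q$ and $z$. The first thing I would note is that, since $\Lambda_+$ is ROG, $\hat\stdFace$ is generated by some $x$ that has rank one with respect to $p$ and $e$; the key is to translate this into a rank-one statement for $q$ and $z$.

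The central point is the relationship between the rank of a point $x\in\stdFace$ measured inside $\Lambda_+$ (with respect to $p,e$) and the rank measured inside the subcone $\stdFace$ (with respect to $q,z$). First I would establish that these two ranks coincide for every $x \in \stdFace$. One clean way: the rank of $x$ with respect to $p$ equals $\rank(\stdFace(x))$ where $\stdFace(x)$ is the minimal face of $\Lambda_+$ containing $x$ (Proposition~\ref{prop:rank}\ref{prop:rank:2}), and $\stdFace(x)$ is also the minimal face of $\stdFace$ containing $x$ since faces of $\stdFace$ are exactly the faces of $\Lambda_+$ contained in $\stdFace$. So the rank function is really a function of the face lattice, and Proposition~\ref{prop:chain2} already tells us that in a regular ROG cone the rank of a face equals the length of a maximal chain of faces below it, minus one — a purely lattice-theoretic quantity. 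Applying this inside $\stdFace$: since $\stdFace$ has some rank $r$ as a face of $\Lambda_+$, and $\deg q = r$ (this follows because, by Proposition~\ref{prop:strict} applied along the chain, $\deg(D_e^m p) = d - m = \rank(\stdFace)$, and restriction to $\spanVec\stdFace$ does not change the degree — or more directly, $q$ generates $\stdFace$ and has degree $d-m=r$), the degree of $q$ matches the rank of $\stdFace$. Then for any $x \in \stdFace$, its rank with respect to $q,z$ is $\rank_{\stdFace}(\stdFace(x))$, which by Proposition~\ref{prop:chain2} applied to the ROG-ness we are trying to prove... — and here is the circularity to avoid.

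To break that, the cleaner route is: I would argue directly that $\rank_q(x) = \rank_p(x)$ for all $x\in\stdFace$ by counting zero eigenvalues. Write $\mult_p(x)$ and $\mult_q(x)$ for the multiplicities of $0$ as eigenvalue of $x$ with respect to $(p,e)$ and $(q,z)$. We have $\rank_p(x) = d - \mult_p(x)$ and $\rank_q(x) = (d-m) - \mult_q(x)$. Since $z\in\reInt\stdFace$, I can choose $z$ itself as the hyperbolicity direction for $p$ and for $D_e^m p$ (both are hyperbolic along $z$, as $z\in\reInt\Lambda_+$), and the multiplicity function is independent of the choice of interior direction (\cite[Proposition~22]{Re06}). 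Now for $x\in\stdFace$, the eigenvalues of $x$ with respect to $D_e^m p$ along $z$ interlace those of $p$ along $z$, and in fact the $m$-th derivative strips off exactly $m$ of the eigenvalues; combined with the fact that restricting $D_e^m p$ to $\spanVec\stdFace$ to get $q$ discards only eigenvalues that vanish identically on $\stdFace$ — because $\stdFace = \Lambda_+(q,z)$ and $\stdFace$ sits in the zero set of the "removed" factors — one gets $\mult_p(x) = \mult_q(x) + m$, hence $\rank_q(x) = (d-m)-(\mult_p(x)-m) = d - \mult_p(x) = \rank_p(x)$. Therefore an extreme ray of $\stdFace$ generated by a rank-one (w.r.t.\ $p$) element is generated by a rank-one (w.r.t.\ $q$) element, and since every extreme ray of $\stdFace$ is an extreme ray of $\Lambda_+$, ROG-ness of $\Lambda_+$ gives that all extreme rays of $\stdFace$ have rank one with respect to $q,z$. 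Also $\stdFace$ is pointed (a face of a pointed cone), so $\stdFace$ is a pointed hyperbolicity cone, and the definition of ROG is satisfied.

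\textbf{Main obstacle.} The delicate step is the eigenvalue bookkeeping $\mult_p(x) = \mult_q(x) + m$, i.e., making precise why passing from $D_e^m p$ to its restriction $q$ on $\spanVec\stdFace$ subtracts exactly the right number of zero eigenvalues and no nonzero ones. This is essentially contained in the proof of \cite[Corollary~3.4]{LRS21} / Proposition~\ref{prop:faces} (the restriction is "clean" precisely because $\stdFace$ is cut out by the linear factors that $D_e^m p$ acquires), so I would lean on that rather than re-derive it; alternatively, one can sidestep the eigenvalue argument entirely by invoking that the rank function depends only on the cone and its face lattice, and that $\deg q = \rank(\stdFace)$ together with Proposition~\ref{prop:chain2} forces every extreme ray of $\stdFace$ to have rank exactly one with respect to any minimal polynomial — but that again needs $q$ to be minimal for $\stdFace$, which is true (it generates $\stdFace$ and has degree equal to the maximal chain length), so this lattice-theoretic route may in fact be the shortest and I would present it as the main argument, using the eigenvalue discussion only as intuition.
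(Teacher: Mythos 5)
Your final argument --- take the chain of $r+1$ faces from Proposition~\ref{prop:chain2} running from $\{0\}$ through the given extreme ray up to $\stdFace$, observe that these are all faces of $\Lambda_+(q,z)$ with $\deg q = r$, and let the strict increase of rank along chains (item~(iii) of Proposition~\ref{prop:rank}) force the ranks with respect to $q$ to be $0,1,\ldots,r$ --- is exactly the proof given in the paper, and it is correct; the eigenvalue-bookkeeping route you sketch first is indeed the more delicate one and you are right to set it aside. One small caution: your passing claim that the rank function ``depends only on the cone and its face lattice'' is false in general (cf.\ Example~\ref{ex:rank}), but the chain-counting argument does not rely on it, nor does it actually need minimality of $q$ --- only $\deg q = r$.
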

\begin{proof}
Suppose that $p$ has degree $d$, so the rank of $z$ is $r \coloneqq d-m$ and the degree of $q$ is also $r$.
Let $\hat \stdFace$ be any extreme ray of $\stdFace$.
 By Proposition~\ref{prop:chain2}, there is a chain of faces of $\Lambda_+$ satisfying
\[
\{0\} = \stdFace_0 \subsetneq \stdFace_1 = \hat \stdFace \subsetneq \stdFace_2 \subsetneq \cdots \subsetneq \stdFace_{r-1} \subsetneq \stdFace_{r} = \stdFace.
\]
In particular, all the $\stdFace_{i}$ are also faces of $\stdFace$. Now, we consider the rank of $\stdFace_i$ computed with respect to $q$ and $z$ instead of computing with respect to $p$ and $e$. 
$\stdFace = \stdFace_{r}$ has rank $r$ and the rank function is strictly decreasing when we go down the chain of faces {(see item~\ref{prop:rank:3} of Proposition~\ref{prop:rank})}. 
Since we have $r+1$ faces, it must be the case that the rank of $\stdFace_{i}$ is $i$, when computed with respect to $q$ and $z$. In particular, $\hat \stdFace$ has rank~$1$.
\end{proof}

\subsubsection{Examples of ROG hyperbolicity cones}\label{sec:rog_ex}
In this subsection, we present two families of ROG hyperbolicity cones.

\paragraph{Symmetric cones}

Let $\jAlg$ be a real  finite-dimensional Euclidean space.  A cone $\stdCone \subseteq \jAlg$
is said to be \emph{homogeneous} if for every $x, y \in \reInt \stdCone$, there exists $A \in \Aut(\stdCone)$ such that $Ax = y$.
Then, a cone is said to be \emph{self-dual} if there exists an inner-product $\inProd{\cdot}{\cdot}$ under which 
the dual cone $\stdCone^* \coloneqq \{y \in \jAlg \mid \inProd{x}{y} \geq 0, \forall x \in \stdCone\}$ coincides with $\stdCone$.
Finally, a cone is said to be \emph{symmetric} if it is both homogeneous and self-dual.
Typical examples of symmetric cones include the nonnegative orthant, the second-order cones, the cone of real symmetric positive semidefinite matrices and their direct products.

The symmetric cones are exactly the ones that arise as cone of squares of Euclidean Jordan Algebras. More precisely, a Euclidean Jordan Algebra is a finite-dimensional Euclidean space equipped with an inner-product $\inProd{\cdot}{\cdot}$ and a bilinear product $\jProd{}{}:\jAlg \times \jAlg \to \jAlg$ satisfying the following properties.
\begin{enumerate}[$(i)$]
	\item $\jProd{x}{y} = \jProd{y}{x}$,
	\item $\jProd{x}({\jProd{x^2}{y}}) = \jProd{x^2}({\jProd{x}{y}})$, where $x^2 = \jProd{x}{x}$,
	\item $\inProd{\jProd{x}{y}}{z} = \inProd{x}{\jProd{y}{z}}$,
\end{enumerate}
for every $x,y,z \in \jAlg$. Every Euclidean Jordan algebra has an identity element $e \in \jAlg$, so that 
$\jProd{e}{x} = x$ holds, for all $x \in \jAlg$.  Then, the corresponding cone of squares is
defined by
\[
\stdCone \coloneqq \{\jProd{x}{x} \mid x \in \jAlg \}
\]
and $\stdCone$ is a symmetric cone \cite[Theorem~III.2.1]{FK94} such that $e$ belongs to the interior of $\stdCone$. Conversely, every symmetric cone arises in this way, see~\cite[Theorem~III.3.1]{FK94}.
For more information on Jordan Algebras see \cite{FK94,K99,FB08}.

G\"uler showed in \cite[Theorem~8.1]{Gu97} that all homogenous cones are hyperbolicity cones so, in particular, all symmetric cones are hyperbolicity cones as well. See also \cite[Section~2.2]{SS08} for a related discussion. 

Here, we will observe that all symmetric cones are not only hyperbolicity cones but they can also be realized as ROG hyperbolicity cones. 
For those familiar with Jordan Algebras this might be almost obvious, but we explain here the relevant details briefly.

Let $\jAlg$ be a Euclidean Jordan algebra with Jordan product $\jProd{}{}$ and inner-product $\inProd{\cdot}{\cdot}$. An element $c \in \jAlg$ is said to be  an \emph{idempotent} if $c^2 = c$. An idempotent $c$ is said to be \emph{primitive} if it cannot be written as the sum of two non-trivial idempotents $a,b$ satisfying $\jProd{a}{b} = 0$.  

Every element $x \in \jAlg$ admits a \emph{spectral decomposition} as follows. 
There are	 primitive idempotents $c_1, \dots, c_r$ satisfying
$c_1 + \cdots + c_r  = \stdInt$, $\jProd{c_i}{c_j} = 0$ for $i \neq j$ and  
unique real numbers $\lambda _1, \ldots, \lambda_r$ satisfying
\begin{equation}		
x = \sum _{i=1}^r \lambda _i c_i \label{eq:dec},
\end{equation}
see \cite[Theorem~III.1.2]{FK94}.
The number $r$ (which depends only on the algebra $\jAlg$ and not on the specific $x$) is called the \emph{rank} of the algebra $\jAlg$. 
In analogy with classical linear algebra, the $\lambda_i$ are sometimes called the \emph{eigenvalues of $x$} and, although they may be repeated, they are uniquely defined for $x$. 
From the spectral theorem and properties of the Jordan product, one may prove the following relations.
\begin{equation}\label{eq:eig_jordan}
x \in \stdCone \Leftrightarrow \lambda_{i} \geq 0, \forall i  \qquad x \in \reInt\stdCone \Leftrightarrow \lambda_{i} > 0, \forall i.
\end{equation}
Next, let $\lambda_i(x)$ denote the $i$-th smallest eigenvalue of $x$ and let $\lambda(x) \coloneqq (\lambda_{1}(x),\cdots, \lambda_r(x))$. We define the function $\det_{\jAlg}:\jAlg \to \Re$ given by
\begin{equation}\label{eq:det_j}
\textstyle\det_{\jAlg}(x) \coloneqq \lambda_{1}(x)\cdots\lambda_{r}(x).
\end{equation}
Let us check that $\det_{\jAlg}$ is a hyperbolic polynomial along $e$. First,  $\det_{\jAlg}$ is indeed a polynomial because of \cite[Theorem~III.1.2]{FK94}, which implies that the elementary symmetric polynomials composed with the eigenvalue map $\lambda$ are polynomials.
Next, we observe that for  $t\in \Re$, the eigenvalues of the identity element $e$ are $1$, so $\det_{\jAlg}(e) = 1$. Then,  the (Jordan Algebraic) eigenvalues of 
$x - te$ are exactly $\lambda_1(x) - t, \cdots, \lambda_r(x) - t$ and they are all real. 
Therefore, for every $x \in \jAlg$, the roots of 
$t \mapsto \det_{\jAlg}(x - te)$ are the eigenvalues of $x$. This shows that $\det_{\jAlg}$ is a hyperbolic along $e$ and, in view of \eqref{eq:eig_jordan}, the underlying hyperbolicity cone is indeed $\stdCone$.

Finally, let $\stdFace \face \stdCone$ be an extreme ray generated by $x \in \stdCone$. By the spectral theorem, $x$ can be written as a nonnegative linear combination of primitive idempotents as in \eqref{eq:dec}. Since $c_i^2 = c_i$, each $c_i \in \stdCone$. Therefore, if $\lambda_{i} > 0$ for some $i$, then since $\stdFace$ is a face, $x = \alpha c_i$ holds for some $\alpha > 0$. 
We have $\jProd{c_i}{c_j} = 0$ for $i \neq j$, which implies that $c_i$ and $c_j$ cannot be parallel. 
This implies that at most one eigenvalue of $x$ can be positive. On the other hand, at least one must be positive (since we would have $x = 0$ otherwise). 
Therefore, \emph{exactly} one eigenvalue of $x$ is positive and the rank of $x$ with respect to $\det_{\jAlg}$ is $1$.
This shows that $\stdCone$ is a ROG hyperbolicity cone with respect to $\det_{\jAlg}$. We note this as a proposition.
\begin{proposition}[Symmetric cones are hyperbolicity ROG cones]\label{prop:sym_rog}
	Let $\stdCone \subseteq \jAlg$ be a finite-dimensional symmetric cone and $\jAlg$ its underlying Euclidean Jordan Algebra. Then $\stdCone$ is a ROG hyperbolicity cone with respect to 
$\det_{\jAlg}$ in \eqref{eq:det_j}.
\end{proposition}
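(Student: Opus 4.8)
The plan is to check the three things packaged into the statement: that $\det_{\jAlg}$ from \eqref{eq:det_j} is a genuine homogeneous polynomial, that it is hyperbolic along the Jordan identity $e$ with $\Lambda_+(\det_{\jAlg},e)=\stdCone$, and that every extreme ray of $\stdCone$ is generated by a rank-one element with respect to $\det_{\jAlg}$. The first point is the delicate one and is where I would invoke the structure theory: by the Jordan-algebraic spectral theorem \cite[Theorem~III.1.2]{FK94}, each elementary symmetric function of the eigenvalues $\lambda_1(x),\dots,\lambda_r(x)$ is a polynomial in $x$; taking the top one gives that $\det_{\jAlg}(x)=\lambda_1(x)\cdots\lambda_r(x)$ is a polynomial, and it is homogeneous of degree $r$, where $r$ is the rank of the algebra, since $\lambda_i(\alpha x)=\alpha\lambda_i(x)$ for $\alpha\ge 0$ and polynomiality then forces the same identity for all $\alpha$.

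For hyperbolicity, I would use that $e=c_1+\cdots+c_r$ for any complete system of orthogonal primitive idempotents, so all its eigenvalues equal $1$ and $\det_{\jAlg}(e)=1>0$. Given arbitrary $x$ with spectral decomposition $x=\sum_i\lambda_i c_i$ as in \eqref{eq:dec}, we have $x-te=\sum_i(\lambda_i-t)c_i$, hence the eigenvalues of $x-te$ are $\lambda_i(x)-t$ and the roots of $t\mapsto\det_{\jAlg}(x-te)$ are exactly $\lambda_1(x),\dots,\lambda_r(x)$ --- all real. So $\det_{\jAlg}$ is hyperbolic along $e$, and by \eqref{eq:eig_jordan} those roots are all nonnegative precisely when $x\in\stdCone$; thus $\Lambda_+(\det_{\jAlg},e)=\stdCone$. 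As a by-product, the hyperbolicity rank of $x$ with respect to $\det_{\jAlg}$ equals the number of nonzero Jordan eigenvalues of $x$.

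For the ROG property, let $\stdFace\face\stdCone$ be an extreme ray generated by a nonzero $x\in\stdCone$, and write $x=\sum_{i=1}^r\lambda_i c_i$ with all $\lambda_i\ge 0$. Each $c_i=c_i^2\in\stdCone$, so this expresses $x$ as a nonnegative combination of elements of $\stdCone$; since $\stdFace$ is a face, splitting off one summand at a time shows that $\lambda_i c_i\in\stdFace$ for every $i$ with $\lambda_i>0$. As $\stdFace$ is one-dimensional, each such $c_i$ is a positive multiple of $x$, and hence of one another; but $\jProd{c_i}{c_j}=0$ for $i\ne j$ forces distinct $c_i$ to be non-parallel, so there can be at most one index with $\lambda_i>0$. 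There is also at least one, else $x=0$. Hence $x$ has exactly one nonzero eigenvalue, i.e.\ $\rank(x)=1$ with respect to $\det_{\jAlg}$, and $\stdCone$ is ROG.

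The main obstacle is step one: everything downstream is a short manipulation of spectral decompositions together with the definition of a face, but ``$\det_{\jAlg}$ is a polynomial'' is exactly the place where one cannot avoid the spectral theorem for Euclidean Jordan algebras. A minor point worth stating carefully is the claim that a face containing $\sum_i\lambda_i c_i$ (with $\lambda_i\ge 0$ and $c_i\in\stdCone$) contains each $\lambda_i c_i$: this is the face definition applied inductively, writing the sum as $\lambda_1 c_1$ plus the remaining terms, both of which lie in $\stdCone$.
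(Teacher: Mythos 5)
Your proposal is correct and follows essentially the same route as the paper: polynomiality of $\det_{\jAlg}$ via the spectral theorem \cite[Theorem~III.1.2]{FK94}, hyperbolicity along $e$ by observing that the eigenvalues of $x-te$ are $\lambda_i(x)-t$, and the ROG property by splitting the spectral decomposition of an extreme-ray generator across the face and using $\jProd{c_i}{c_j}=0$ to rule out parallel idempotents. The extra details you supply (degree-$r$ homogeneity, the inductive face-splitting) are fine and only make explicit what the paper leaves implicit.
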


We speculate that the same might be true for homogeneous cone but this seems to be a more complicated question.
G\"uler showed that homogeneous cones are hyperbolicity cones as well in \cite[Section~8]{Gu97}.
However, it is not clear whether the polynomial $p$ given therein is minimal,  which is a necessary condition for the cone to be ROG with respect to $p$, in view of Proposition~\ref{prop:strict}.



\paragraph{ROG spectrahedral cones}
A closed convex cone $\stdCone \subseteq \Re^m$ is said to be \emph{spectrahedral} \cite{RG95} if  for some nonnegative $n$ there are  $m$ matrices $A_i \in \S^n$ such that \[
\stdCone = \{ x \in \Re^m \mid x_1 A_1 + \cdots + x_m A_m \in \PSDcone{n} \}.
\]
Defining the linear map $\stdMap(x) \coloneqq x_1 A_1 + \cdots + x_m A_m $, $\stdCone$ can be alternatively written as 
\begin{equation}\label{eq:spec}
\stdCone = \{ x \mid \stdMap(x) \in \PSDcone{n} \}.
\end{equation}
Without loss of generality, we may assume that the $A_i$ are linearly independent, so that $\stdMap$ is a linear bijection between 
$\stdCone$ and the cone of ``slack matrices'' $\stdCone_S \coloneqq \matRange(\stdMap) \cap \PSDcone{n}$, which corresponds to a linear slice of the cone of positive semidefinite matrices.
Put otherwise, a cone is spectrahedral if it is linearly isomorphic to an intersection of the form $\PSDcone{n} \cap \stdSpace$, where $\stdSpace \subseteq \S^n$ is a subspace.

A spectrahedral cone as in \eqref{eq:spec} is said to be \emph{non-degenerate} if there exists $\bar{x} \in \stdCone$ such that $\stdMap(\bar{x})$ is positive definite. If 
$\stdCone$ is non-degenerate, then $ \det_{\stdMap}: \Re^m \to \Re$ given by 
{$\det_{\stdMap}(x) \coloneqq \det(\stdMap(x))$ is a hyperbolic polynomial along $\bar{x}$ such that $\stdCone = \Lambda_+(\det_{\stdMap} , \bar{x})$.}

Finally, $\stdCone$ as in \eqref{eq:spec} is said to be \emph{rank-one generated} (ROG) \cite{Hd16} if the following condition holds: if $x$ generates an extreme ray of $\stdCone$ then the rank of the matrix $\stdMap(x)$ is $1$.
As mentioned in the introduction, one reason why the study of ROG spectrahedral cones is important is because whether a spectrahedral cone is ROG or not is intimately connected to whether  SDP relaxations of certain quadratic problems are exact or not, see \cite[Lemma~1.2]{Hd16} and \cite{AKW22}.

\begin{example}[ROGness depends on $\stdMap$]
	A spectrahedral cone $\stdCone$  may satisfy \eqref{eq:spec} for different choices of $\stdMap$. 
	It is well-known that the 3D second-order cone $\{(x_0,x_1,y_2) \mid x_0 \geq 0, x_0^2 \geq x_1^2 + x_2^2\}$ can be represented as a spectrahedral cone over the $2\times 2$ matrices or over the $3\times 3$ matrices (e.g., see \cite[Example~3.5]{Hd16}). In particular,
	following \cite[Example~3.5]{Hd16}, let
	\begin{equation}
	\stdMap_1(x) \coloneqq \begin{pmatrix}
	x_0 & x_1 & x_2\\
	x_1 & x_0 & 0 \\
	x_2 & 0  & x_0
	\end{pmatrix}, \qquad \stdMap_2(x) \coloneqq \begin{pmatrix}
	x_0 +x_1 & x_2\\
	x_2 & x_0-x_1
	\end{pmatrix}.
	\end{equation}
	With that, $\stdCone$ is ROG with respect to the representation induced by $\stdMap_2$ but not with respect to the representation induced by $\stdMap_1$.
\end{example}

If $\stdCone$ is non-degenerate and ROG as a spectrahedral cone (with respect to the representation given in \eqref{eq:spec}), it is not immediately obvious that $\stdCone$, seen as a hyperbolicity cone, is also ROG with respect to $\det_{\stdMap}$.
In the next proposition, we take care of this issue. 

\begin{proposition}\label{prop:rog_rank}
	Let $\stdCone$  (as in \eqref{eq:spec}) be a non-degenerate spectrahedral cone. 
	Then, for every $x \in \stdCone$ we have 
	that $\rank(x)$ (computed with respect to $\det_{\stdMap}$) is equal to the rank of the matrix $\stdMap(x)$.
	In particular, $\stdCone$ is a ROG spectrahedral cone (with respect to the representation induced by $\stdMap$) if and only if it is a ROG hyperbolicity cone with respect to $\det_{\stdMap}$.
\end{proposition}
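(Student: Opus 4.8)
The plan is to compute the hyperbolic eigenvalues of a point $x \in \stdCone$ directly and match them with the spectrum of the matrix $\stdMap(x)$. Fix $\bar x \in \stdCone$ with $B \coloneqq \stdMap(\bar x) \succ 0$; as recalled in the paragraph preceding the proposition, $\det_{\stdMap}$ is hyperbolic along $\bar x$, so we may compute $\rank(x)$ and $\mult(x)$ (with respect to $\det_{\stdMap}$) using $\bar x$ as hyperbolicity direction. By definition, the eigenvalues of $x$ are the roots of $t \mapsto \det_{\stdMap}(t\bar x - x) = \det\big(tB - \stdMap(x)\big)$.

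First I would pass to a congruent pencil. Writing $tB - \stdMap(x) = B^{1/2}\big(tI - B^{-1/2}\stdMap(x)B^{-1/2}\big)B^{1/2}$ gives $\det\big(tB - \stdMap(x)\big) = \det(B)\cdot\det\big(tI - B^{-1/2}\stdMap(x)B^{-1/2}\big)$, so the eigenvalues of $x$, counted with multiplicity, are exactly the ordinary eigenvalues of the symmetric matrix $M_x \coloneqq B^{-1/2}\stdMap(x)B^{-1/2}$. Consequently $\mult(x)$, the multiplicity of $0$ among these eigenvalues, equals $\dim\ker M_x = n - \rank M_x$; and since $B^{-1/2}$ is invertible, $\rank M_x = \rank \stdMap(x)$. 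Therefore $\rank(x) = n - \mult(x) = \rank \stdMap(x)$, which is the first assertion. (This also re-confirms that $\rank(x)$ does not depend on which positive-definite value $\stdMap(\bar x)$ is used, consistent with Renegar's result that the rank is independent of the hyperbolicity direction.)

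The ``in particular'' statement is then immediate: both ``$\stdCone$ is a ROG spectrahedral cone with respect to $\stdMap$'' and ``$\stdCone$ is a ROG hyperbolicity cone with respect to $\det_{\stdMap}$'' are, by definition, assertions about the generators $x$ of the extreme rays of the single cone $\stdCone$ — the former that $\rank \stdMap(x) = 1$ for every such $x$, the latter that $\rank(x) = 1$ for every such $x$ (and since $\rank$ is invariant under positive scaling, it suffices that this hold for one generator of each extreme ray) — and these two conditions coincide by the rank identity just established.

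I do not expect a serious obstacle here; the only step requiring a little care is the reduction of the generalized eigenvalue problem $\det\big(tB - \stdMap(x)\big) = 0$ to an ordinary symmetric eigenvalue problem via congruence by $B^{-1/2}$, together with the observation that congruence by an invertible matrix preserves rank (equivalently, the dimension of the kernel). It is also worth bearing in mind that $\det_{\stdMap}$ need not be the minimal polynomial of $\stdCone$, but this causes no difficulty since $\rank$ and $\mult$ are taken throughout relative to the fixed polynomial $\det_{\stdMap}$.
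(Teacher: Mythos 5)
Your proof is correct. It follows the same overall skeleton as the paper's — fix $\bar x$ with $\stdMap(\bar x)\succ 0$ and identify $\rank(x)$ with the number of nonzero roots of the pencil $t\mapsto\det\bigl(t\stdMap(\bar x)-\stdMap(x)\bigr)$ — but the key step is justified differently. The paper passes from the direction $\stdMap(\bar x)$ to the direction $I_n$ by invoking Renegar's general fact that the hyperbolic rank (for $\det$ on $\S^n$, whose cone is $\PSDcone{n}$) is independent of the choice of hyperbolicity direction in the interior, and then reads off the matrix rank from the roots of $t\mapsto\det(\stdMap(x)-tI_n)$. You instead perform the explicit congruence $tB-\stdMap(x)=B^{1/2}\bigl(tI-B^{-1/2}\stdMap(x)B^{-1/2}\bigr)B^{1/2}$ and use that congruence by an invertible matrix preserves rank, so that $\mult(x)=\dim\ker\bigl(B^{-1/2}\stdMap(x)B^{-1/2}\bigr)=n-\rank\stdMap(x)$. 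Your route is more elementary and self-contained (pure linear algebra, no appeal to Renegar's Proposition~22, which you correctly relegate to a consistency remark), at the cost of being specific to the determinant; the paper's version is shorter and illustrates the general direction-independence principle it uses elsewhere. Both correctly handle the ``in particular'' clause as an immediate consequence of the pointwise rank identity, and your closing caveat that minimality of $\det_{\stdMap}$ is irrelevant here is accurate.
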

\begin{proof}
	Since $\stdCone$ is non-degenerate, there exists $\bar{x}$ such that $\stdMap(\bar{x})$ is positive definite.
	By definition, 	$\rank(x)$ (computed with respect to $\det_{\stdMap}$) is equal to number of nonzero roots that 
	the one-dimensional polynomial 
	\[
	t \mapsto \det(\stdMap(x) - t\stdMap(\bar{x}))
	\]
	has. Now,  $\det: \S^n \to \Re$ is a hyperbolic polynomial along the identity matrix and the corresponding hyperbolicity cone is $\PSDcone{n}$. Therefore, $\det$ is also hyperbolic along $\stdMap(\bar{x})$ and, since the rank function does not depend on the hyperbolicity direction (\cite[Proposition~22]{Re06}), the number of nonzero roots of  $t \mapsto \det(\stdMap(x) - t\stdMap(\bar{x}))$ coincide with the number of nonzero roots of 
	$t \mapsto \det(\stdMap(x) - tI_n)$, where $I_n$ is the $n\times n$ identity matrix. However, this latter is equal to the usual matrix rank of $\stdMap(x)$.
\end{proof}
{{
		Proposition~\ref{prop:rog_rank} needs to be interpreted carefully. In fact, there are spectrahedral cones  that are hyperbolic ROG but none of their spectrahedral representations are ROG. So, in fact, the class of cones that are hyperbolic ROG under some choice of $p$ is strictly larger than the class of cones that are spectrahedral ROG. This will be illustrated right after we take care of the following technical lemma.
		
		\begin{lemma}\label{lem:non_deg}
			If $\stdCone$ is a spectrahedral cone as in \eqref{eq:spec}, then $\stdCone$ also has a non-degenerate spectrahedral representation $\stdCone = \{x \in \Re^m \mid \hat \stdMap(x) \in \PSDcone{r}\}$ for some $r \leq n$. Furthemore, $\hat \stdMap$ can be chosen in such a way that $\stdCone$ is a ROG spectrahedral cone with respect to $\stdMap$ if and only if $\stdCone$ is a ROG spectrahedral cone with respect to $\hat \stdMap$.
		\end{lemma}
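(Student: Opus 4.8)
The plan is to reduce the representation by restricting attention to the smallest subspace on which the image $\stdMap(\stdCone)$ actually lives, and then deal with the ROGness compatibility as a separate bookkeeping step. Concretely, first I would consider the subspace $U \coloneqq \matRange(\stdMap) \cap \PSDcone{n}$ of slack matrices and the minimal face $G \face \PSDcone{n}$ containing this set (i.e. $G = \stdFace(U)$ in the notation of Section~\ref{sec:prel}). A classical fact about the PSD cone is that every face $G$ is linearly isomorphic to $\PSDcone{r}$ for some $r \leq n$: if $G$ consists of the PSD matrices whose range is contained in a fixed $r$-dimensional subspace $V \subseteq \Re^n$, then picking an orthonormal basis of $V$ and forming the matrix $Q \in \Re^{n \times r}$ with those columns, the map $M \mapsto Q^\top M Q$ is a linear bijection from $G$ onto $\PSDcone{r}$ with inverse $N \mapsto Q N Q^\top$. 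Composing, I would set $\hat\stdMap(x) \coloneqq Q^\top \stdMap(x) Q$. Since $\stdMap(x) \in \PSDcone{n}$ forces $\stdMap(x) \in \spanVec U \subseteq \spanVec G$, and the relative interior of $U$ meets $\reInt G$ so that $G$ really is the minimal face, one checks $\stdMap(x) \in \PSDcone{n} \Leftrightarrow \stdMap(x) \in G \Leftrightarrow \hat\stdMap(x) \in \PSDcone{r}$; hence $\stdCone = \{x \mid \hat\stdMap(x) \in \PSDcone{r}\}$. Non-degeneracy of $\hat\stdMap$ is immediate: any $\bar x$ with $\stdMap(\bar x) \in \reInt G$ gives $\hat\stdMap(\bar x) = Q^\top \stdMap(\bar x) Q$ positive definite in $\PSDcone{r}$, and such $\bar x$ exists because $G$ is the minimal face containing the slacks, so $U$ meets $\reInt G$.

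For the ROGness compatibility, the key observation is that $\hat\stdMap$ and $\stdMap$ produce the same rank data on $\stdCone$: for $x \in \stdCone$ one has $\stdMap(x) = Q\,\hat\stdMap(x)\,Q^\top$ with $Q$ of full column rank, so $\matRank \stdMap(x) = \matRank \hat\stdMap(x)$. Since the extreme rays of $\stdCone$ are an intrinsic property of $\stdCone$ (they do not depend on which representation we use), ``$x$ generates an extreme ray of $\stdCone$ $\Rightarrow$ $\matRank\stdMap(x) = 1$'' holds if and only if ``$x$ generates an extreme ray of $\stdCone$ $\Rightarrow$ $\matRank\hat\stdMap(x) = 1$.'' That is exactly the statement that $\stdCone$ is ROG with respect to $\stdMap$ iff it is ROG with respect to $\hat\stdMap$.

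The main obstacle — really the only nontrivial point — is verifying that the minimal face $G = \stdFace(U)$ of $\PSDcone{n}$ is genuinely captured by the subspace constraint, i.e. that $\stdMap(x) \in \PSDcone{n}$ already implies $\stdMap(x) \in G$, not merely $\stdMap(x) \in \spanVec G \cap \PSDcone{n}$ which a priori could be strictly larger. This is where one uses the structure of faces of $\PSDcone{n}$: a face $G$ of $\PSDcone{n}$ is precisely $\PSDcone{n} \cap \spanVec G$ (faces of the PSD cone are exposed and in fact $G = \PSDcone{n} \cap \matRange(P)\S^n\matRange(P)$ for the orthogonal projector $P$ onto the common range, and this intersection equals $\spanVec G \cap \PSDcone{n}$). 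Combined with $\matRange(\stdMap) \cap \PSDcone{n} = U \subseteq G \subseteq \spanVec G$ and $\matRange(\stdMap) \subseteq \spanVec G$ — the latter holding provided we shrink $\matRange(\stdMap)$ first to $\matRange(\stdMap) \cap \spanVec G$, which does not change $\stdCone$ since points of $\stdCone$ already map into $\spanVec G$ — we get $\matRange(\stdMap) \cap \PSDcone{n} = \matRange(\stdMap) \cap \spanVec G \cap \PSDcone{n} \subseteq G$, closing the argument. Everything else is routine linear algebra.
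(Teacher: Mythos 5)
Your construction is essentially the paper's: both pass to the minimal face $G$ of $\PSDcone{n}$ containing the slack matrices $\matRange(\stdMap)\cap\PSDcone{n}$, use the linear isomorphism of $G$ (and of $\spanVec G$) with $\PSDcone{r}$ (resp.\ $\S^r$) given by congruence with a full-column-rank matrix, and note that this congruence preserves matrix ranks, hence ROGness; so the approach and level of detail match the paper's sketch. The one wobble is the ``shrink $\matRange(\stdMap)$ to $\matRange(\stdMap)\cap\spanVec G$'' remark: that is not a well-defined operation on the map $\stdMap$ and is not what rescues the reverse inclusion $\{x \mid \hat\stdMap(x)\in\PSDcone{r}\}\subseteq\stdCone$. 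What actually closes it is that when $\stdCone$ is full-dimensional one has $\matRange(\stdMap)=\stdMap(\spanVec\stdCone)=\spanVec(\stdMap(\stdCone))\subseteq\spanVec G$ automatically, while for non-full-dimensional $\stdCone$ no non-degenerate representation can exist at all (positive definiteness of $\hat\stdMap(\bar x)$ forces a neighborhood of $\bar x$ to lie in $\stdCone$), so the lemma implicitly assumes full-dimensionality --- a point the paper's own sketch glosses over as well, and which is harmless in the only application (second-order cones).
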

		\begin{proof}
		This is a consequence of the facial structure of $\PSDcone{n}$ and it is discussed to some extent in \cite[Lemma~2.5]{Hd16}, so we will only present a sketch  of the proof here. Suppose that $\stdCone$ is a spectrahedral cone as in \eqref{eq:spec}, let $\stdSpace \subseteq \S^n$ the space spanned by the $A_i$ and let $\stdFace$  be the minimal face of $\PSDcone{n}$ containing $\stdSpace$. With that,  $\stdSpace \cap (\reInt \stdFace) \neq \emptyset$. Since $\stdFace$ is a face of $\PSDcone{n}$ it is linearly isomorphic to a positive semidefinite cone over matrices of size $r \leq n$. In fact, more can be said and there exists an invertible matrix $V$ such that $V \stdFace V^\T = \left\{\begin{pmatrix}
		A & 0 \\ 0 & 0 
		\end{pmatrix} \mid A \in \PSDcone{r} \right\} $.  Letting $\pi_r : \S^n \to \S^r$ be the map that takes a $n\times n$ symmetric matrix to its upper left $r \times r $ block, we construct the map $\hat{\stdMap}: \Re^{m} \to \S^r$ given by $\hat{\stdMap}(x) = \pi_{r}(V \stdMap(x)V^\T)$. With that, 
		$\stdCone = \{x \mid\hat{\stdMap}(x) \in \PSDcone{r}  \}$  and this a non-degenerate spectrahedral representation of $\stdCone$. By construction, $\stdCone$ is ROG with respect to $\stdMap$ if and only if it is ROG with respect to $\hat \stdMap$.				
		\end{proof}
		
		\begin{proposition}\label{prop:socp_not_rog}
			For $n \geq 3$, the $(n+1)$-dimensional second-order cone 
			$\SOC{2}{n+1} \coloneqq \{x \in  \Re^{n+1} \mid x_0 \geq 0, x_0^2 \geq x_1^2 + \cdots + x_n^2 \}$
			is ROG as a hyperbolicity cone but none of its spectrahedral representations are ROG.
		\end{proposition}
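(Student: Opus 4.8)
The plan is to treat the two assertions separately: the first is essentially immediate from results already in the excerpt, and the second is a short dimension count layered on top of the minimality/rank machinery.

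For the first assertion I would invoke Proposition~\ref{prop:sym_rog}: the $(n+1)$-dimensional Lorentz cone is a symmetric cone (it is the cone of squares of a rank-$2$ Euclidean Jordan algebra, the spin factor), hence a ROG hyperbolicity cone with respect to $\det_{\jAlg}$, which here is the quadratic $x\mapsto x_0^2-x_1^2-\cdots-x_n^2$ up to scaling. If one prefers to avoid Jordan algebras, one checks directly that $p(x)\coloneqq x_0^2-x_1^2-\cdots-x_n^2$ is hyperbolic along $e=(1,0,\dots,0)$ with $\Lambda_+(p,e)=\SOC{2}{n+1}$, that $p$ has minimal degree (a degree-$1$ hyperbolic polynomial generates a halfspace, whereas $\SOC{2}{n+1}$ is pointed), and that any boundary point $x$ has eigenvalues $x_0\pm\norm{\bar x}=2x_0,0$ and hence rank~$1$.

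For the second assertion I would argue by contradiction. Assume $\SOC{2}{n+1}$ admits \emph{some} ROG spectrahedral representation. By Lemma~\ref{lem:non_deg} I may replace it by a non-degenerate one, say $\SOC{2}{n+1}=\{x\in\Re^{n+1}\mid\stdMap(x)\in\PSDcone{r}\}$ with $\stdMap\colon\Re^{n+1}\to\S^r$ still ROG. Two facts then collide. First, $\stdMap$ is injective: if $\stdMap(v)=0$ with $v\neq 0$ then $\stdMap$ is constant along every line $x+\Re v$, forcing a line inside $\SOC{2}{n+1}$ and contradicting pointedness; hence $n+1\le\dim\S^r=r(r+1)/2$. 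Second, by non-degeneracy $\det_{\stdMap}(x)\coloneqq\det(\stdMap(x))$ is hyperbolic of degree $r$ along some $\bar x$ with $\SOC{2}{n+1}=\Lambda_+(\det_{\stdMap},\bar x)$; by Proposition~\ref{prop:rog_rank} the assumed ROGness of $\stdMap$ is exactly ROGness of $\SOC{2}{n+1}$ as a hyperbolicity cone with respect to $\det_{\stdMap}$, and since $\SOC{2}{n+1}$ is regular, item~\ref{prop:strict:1} of Proposition~\ref{prop:strict} then forces $\det_{\stdMap}$ to be a minimal-degree polynomial of $\SOC{2}{n+1}$. As the minimal degree of $\SOC{2}{n+1}$ is $2$, we get $r=2$, hence $n+1\le 3$, i.e.\ $n\le 2$, contradicting $n\ge 3$.

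The step needing the most care is the reduction to a non-degenerate representation: for a degenerate representation $\det_{\stdMap}$ vanishes identically, and neither Proposition~\ref{prop:rog_rank} nor Proposition~\ref{prop:strict} applies, so Lemma~\ref{lem:non_deg} is doing genuine work; the other mild point is certifying that the minimal degree of $\SOC{2}{n+1}$ is exactly $2$. Everything else is bookkeeping. As a consistency check, the hypothesis $n\ge 3$ is sharp: $\SOC{2}{3}$ is $3$-dimensional and does admit a ROG representation inside $\S^2$ (the map $\stdMap_2$ displayed above). Finally, I note that the appeal to Proposition~\ref{prop:strict} can be replaced: since $\SOC{2}{n+1}$ is strictly convex, every point is a sum of at most two extreme-ray generators, so subadditivity of the hyperbolic rank (item~\ref{prop:rank:1} of Proposition~\ref{prop:rank}) forces every point, in particular every interior point, to have rank at most $2$ with respect to $\det_{\stdMap}$; as interior points have rank $\deg\det_{\stdMap}=r$, again $r\le 2$.
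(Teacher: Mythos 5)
Your proposal is correct and follows essentially the same route as the paper: ROGness via Proposition~\ref{prop:sym_rog} (or the direct eigenvalue check), then for the negative part a reduction to a non-degenerate representation via Lemma~\ref{lem:non_deg}, Proposition~\ref{prop:rog_rank} plus Proposition~\ref{prop:strict} to force $\det_{\stdMap}$ to have degree $2$ and hence $r=2$, and injectivity of $\stdMap$ from pointedness to get the dimension bound $n+1\le 3$. The alternative ending via strict convexity and rank subadditivity is a nice optional shortcut but does not change the argument.
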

		\begin{proof}
Second-order cones are symmetric cones, so they are ROG hyperbolicity cones by Proposition~\ref{prop:sym_rog}.
This can also be shown directly by the well-known fact that $\SOC{2}{n+1}  = \Lambda_+(p,e)$ where  $p(x) \coloneqq x_0^2 - x_1^2 - \cdots - x_n^2$ and $e \coloneqq (1,0,\cdots,0)$.	

Next, suppose that we have a ROG spectrahedral representation of 
$\SOC{2}{n+1}$ so that 
\[
\SOC{2}{n+1} =  \{ x \in \Re^{n+1}\mid \stdMap(x) \in \PSDcone{m} \},
\]
where $m \geq 2$, $\stdMap(x) \coloneqq x_0 A_0 + \cdots x_{n} A_n$ with $A_i \in \S^m$. In view of Lemma~\ref{lem:non_deg}, we may assume that the representation is non-degenerate without loss of generality.
Then, by Proposition~\ref{prop:rog_rank}, $\SOC{2}{n+1}$ must be a ROG hyperbolicity cone with respect to $\det_{\stdMap}$. By Proposition~\ref{prop:strict}, $\det_{\stdMap}$ is a  minimal degree polynomial for $\SOC{2}{n+1}$ and, therefore, must have  degree $2$. In particular, the maximum rank that a matrix $\stdMap(x)$ can have for $x \in \SOC{2}{n+1}$ is also $2$. Since we assumed that the spectrahedral representation is non-degenerate, we conclude that $m = 2$.
However, the dimensions of $\PSDcone{2}$, $\SOC{2}{n+1}$ is $3$ and $n + 1$, respectively.

Finally, since $\SOC{2}{n+1}$ is pointed, $\stdMap$ must be injective, so 
the dimension of $\SOC{2}{n+1}$ and $
\stdMap(\SOC{2}{n+1})$ coincide. As $\stdMap(\SOC{2}{n+1})$ is contained in $\PSDcone{2}$, it must be the case that $n+1 \leq 3$.
In conclusion, $\SOC{2}{n+1}$ cannot have a ROG spectrahedral representation in dimension $4$ or higher.
		\end{proof}
	}

}

\subsection{Automorphisms of ROG hyperbolicity cones}
In this subsection, we prove our main results concerning the automorphisms of ROG hyperbolicity cones and their derivative relaxations.

\begin{theorem}[Automorphism groups do not enlarge along derivative relaxations]\label{theo:hyper_aut}
Let $\Lambda_+=\Lambda_+(p,e)$ be a hyperbolicity cone with $d:=\deg p\geq 4$ and $\dim \Lambda_+ \geq 3$. Suppose that $\Lambda_+$ is regular and ROG with respect to $p$. 
Then, for $k$ with $1\leq k\leq d-3$ we have\[
\Aut(\Lambda_+^{(k)}) \subseteq \Aut(\Lambda_+).
\] 
\end{theorem}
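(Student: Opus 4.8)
The plan is to show that any $A \in \Aut(\Lambda_+^{(k)})$ fixes $\Lambda_+$ as a set. The natural strategy is to characterize $\Lambda_+$ purely in terms of the facial structure of $\Lambda_+^{(k)}$, so that an automorphism of the latter is forced to permute the faces defining the former. By Lemma~\ref{lem:face}, every boundary face of $\Lambda_+^{(k)}$ is either a face of $\Lambda_+$ or an exposed extreme ray not contained in $\Lambda_+$. The key observation I would aim for is that the faces of $\Lambda_+$ are exactly the boundary faces $F$ of $\Lambda_+^{(k)}$ of a certain type that can be recognized intrinsically — for instance, a boundary face $F$ with $\dim F \geq 2$ is a face of $\Lambda_+$ if and only if it is \emph{not} an extreme ray, while extreme rays require a finer criterion. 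So the real work is separating the rank-one faces of $\Lambda_+$ from the extra extreme rays of $\Lambda_+^{(k)}$ that poke outside $\Lambda_+$.

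Here is the order in which I would carry this out. First, I would use Proposition~\ref{prop:strict} to know that $D_e^k p$ is the minimal polynomial for $\Lambda_+^{(k)}$, so ranks with respect to $D_e^k p$ are well-defined and, by Proposition~\ref{prop:aut_rank}, preserved by any $A \in \Aut(\Lambda_+^{(k)})$. Second, I would identify, among the extreme rays of $\Lambda_+^{(k)}$, which ones lie in $\Lambda_+$: by Proposition~\ref{prop:chain2} applied to $\Lambda_+$ (which is ROG), every extreme ray $\hat F$ of $\Lambda_+$ sits at the bottom of a maximal chain of faces of $\Lambda_+$ of length $d+1$, whereas an extreme ray of $\Lambda_+^{(k)}$ not contained in $\Lambda_+$ — being a maximal proper face by Lemma~\ref{lem:face}, since it is not contained in $\Lambda_+$ it cannot be further refined within $\Lambda_+^{(k)}$ — can only be at the bottom of a much shorter chain inside $\Lambda_+^{(k)}$ (whose longest chain has length $d-k+1 < d+1$ when $k \geq 1$; here the hypothesis $k \leq d-3$ and $\dim \Lambda_+ \geq 3$ matters to guarantee $\Lambda_+$ has genuinely longer chains and at least three faces to compare). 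So: an extreme ray $R$ of $\Lambda_+^{(k)}$ lies in $\Lambda_+$ iff there is a chain of faces of $\Lambda_+^{(k)}$ of length $d+1$ with bottom $R$. This is an intrinsic property of $\Lambda_+^{(k)}$, hence preserved by $A$. Third, once $A$ permutes the extreme rays of $\Lambda_+$ among themselves, I would recover $\Lambda_+$ as the closed convex hull of those extreme rays — here ROGness of $\Lambda_+$ is essential: $\Lambda_+$ is generated by its extreme rays (it is pointed, full-dimensional, closed convex) — and conclude $A\Lambda_+ = \Lambda_+$, i.e.\ $A \in \Aut(\Lambda_+)$.

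There is one gap to patch: I implicitly assumed every boundary face of $\Lambda_+^{(k)}$ that \emph{is} a face of $\Lambda_+$ contributes long chains, and every ``extra'' extreme ray does not. For the extra extreme rays, the point is that within $\Lambda_+^{(k)}$ the only faces above such an $R$ are faces strictly between $R$ and $\Lambda_+^{(k)}$; by Lemma~\ref{lem:face} these are again faces of $\Lambda_+$ or extra extreme rays, and a chain through $R$ is bounded by the length of the longest chain in $\Lambda_+^{(k)}$, which Proposition~\ref{prop:strict}(and Proposition~\ref{prop:chain2} applied to $\Lambda_+^{(k)}$ together with the rank bound $\deg D_e^k p = d-k$) shows is $d-k+1$. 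Since $k \geq 1$, this is $\leq d$, strictly less than the chain length $d+1$ available to genuine extreme rays of $\Lambda_+$. Thus the two kinds of extreme rays are distinguished by the supremum of chain lengths through them — an invariant of the face lattice of $\Lambda_+^{(k)}$.

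The main obstacle I anticipate is precisely this last point: making airtight the claim that an extreme ray $R$ of $\Lambda_+^{(k)}$ \emph{not} contained in $\Lambda_+$ cannot sit below a chain of length $d+1$ in $\Lambda_+^{(k)}$. The subtlety is that faces of $\Lambda_+^{(k)}$ above $R$ include faces of $\Lambda_+$ that contain $R$ — but $R \not\subseteq \Lambda_+$, so actually no face of $\Lambda_+$ contains $R$, which means every face of $\Lambda_+^{(k)}$ strictly between $R$ and $\Lambda_+^{(k)}$ must itself be an extra extreme ray by Lemma~\ref{lem:face}; but an extreme ray cannot strictly contain another extreme ray, so in fact $R$ is covered only by $\Lambda_+^{(k)}$ itself. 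Hence the longest chain through $R$ has length exactly $3$ ($\{0\} \subsetneq R \subsetneq \Lambda_+^{(k)}$), which is $< d+1$ precisely because $d \geq 4$ — so the hypothesis $d \geq 4$ is exactly what makes this argument run, and the discrimination between the two kinds of extreme rays becomes clean.
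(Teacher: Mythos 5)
Your overall strategy --- recover $\Lambda_+$ from $\Lambda_+^{(k)}$ via an invariant of the face lattice of $\Lambda_+^{(k)}$, so that any $A \in \Aut(\Lambda_+^{(k)})$ is forced to permute the extreme rays of $\Lambda_+$ --- is sound and genuinely different from the paper's, but the discriminating invariant you state in the middle of the argument is wrong as written. You claim that an extreme ray $R$ of $\Lambda_+^{(k)}$ lies in $\Lambda_+$ iff it is the bottom of a chain of faces \emph{of $\Lambda_+^{(k)}$} of length $d+1$. No such chain exists for $k\ge 1$: by Proposition~\ref{prop:strict} the minimal polynomial of $\Lambda_+^{(k)}$ has degree $d-k$, so by item~(iii) of Proposition~\ref{prop:rank} every chain of faces of $\Lambda_+^{(k)}$ has length at most $d-k+1<d+1$. (The chain of length $d+1$ produced by Proposition~\ref{prop:chain2} lives in $\Lambda_+$; its upper portion --- the faces of rank greater than $d-k-1$, including $\Lambda_+$ itself --- does not consist of faces of $\Lambda_+^{(k)}$, and only the truncation up to rank $d-k-1$ survives, via Lemma~\ref{lem:face-mul}.) Taken literally, your criterion classifies every extreme ray of $\Lambda_+^{(k)}$ as lying outside $\Lambda_+$ and the argument collapses. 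The fix is the one your own last paragraph essentially supplies: an extreme ray $R\not\subseteq\Lambda_+$ admits no face of $\Lambda_+^{(k)}$ strictly between $R$ and $\Lambda_+^{(k)}$ (such a face would be, by Lemma~\ref{lem:face}, either a face of $\Lambda_+$ containing $R$ --- impossible --- or an extreme ray strictly containing an extreme ray --- also impossible), so the longest chain through it has length $3$; whereas an extreme ray of $\Lambda_+$ is the bottom of a chain of faces of $\Lambda_+^{(k)}$ of length $d-k+1$ (take the chain of Proposition~\ref{prop:chain2} truncated at rank $d-k-1$; each member has multiplicity at least $k+1$ and is a face of $\Lambda_+^{(k)}$ by Lemma~\ref{lem:face-mul}). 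The hypothesis $k\le d-3$ is exactly what gives $d-k+1\ge 4>3$, so the corrected invariant (length of the longest chain above $R$, or simply the existence of an intermediate face) separates the two kinds of extreme rays; the remaining steps (the extreme rays of $\Lambda_+$ are precisely the extreme rays of $\Lambda_+^{(k)}$ with this property, $A$ preserves the property, and $\Lambda_+$ is the convex hull of its extreme rays) then go through.

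For comparison, the paper avoids classifying extreme rays of $\Lambda_+^{(k)}$ directly: it shows that the strictly convex faces of dimension at least $2$ of $\Lambda_+$ and of $\Lambda_+^{(k)}$ coincide (these are exactly the rank-$2$ faces of $\Lambda_+$, by Lemma~\ref{lem:rank}), so $A$ permutes them, and then recovers each extreme ray of $\Lambda_+$ as the intersection of two such faces. Your chain-length invariant yields a more lattice-theoretic argument, but both proofs rest on the same two pillars: Lemmas~\ref{lem:face} and \ref{lem:face-mul} to move faces between $\Lambda_+$ and $\Lambda_+^{(k)}$, and ROGness (via Proposition~\ref{prop:chain2} in your case, via Lemma~\ref{lem:rank} in the paper's) to guarantee that the low-rank faces are rich enough to pin down the extreme rays of $\Lambda_+$.
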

\begin{proof}
Let $\mathcal{F}_2$ and $\mathcal{F}_2^{(k)}$  be the set of all strictly convex faces with dimension at least $2$ of $\Lambda_+$ and  $\Lambda_+^{(k)}$, respectively.
An initial observation is that $\mathcal{F}_2$ and $\mathcal{F}_2^{(k)}$ only contain boundary faces of $\Lambda_+$ and $\Lambda_+^{(k)}$, respectively.
Put otherwise, neither $\Lambda_+$ nor 
$\Lambda_+^{(k)}$ are strictly convex under the assumptions on $d$ and $k$.
This follows, for example, from the proof of Proposition~\ref{prop:strict}, where we showed that $\Lambda_+$ and $\Lambda_+^{(k)}$ contain chain of faces of length $d+1$ and $d-k+1$ ($\geq 4$), respectively. Whereas the longest chain of faces in a strictly convex cone is no longer than $3$.

 Next, we will prove that
\[
\mathcal{F}_2 = \mathcal{F}_2^{(k)}.
\]
For the ``$\subseteq$'' inclusion, let $F \in \mathcal{F}_2$ and let $x \in \reInt \stdFace$. Then, $x$ has rank $2$ by item~\ref{lem:rank:3} of Lemma~\ref{lem:rank}, i.e., it has multiplicity $d-2$. By Lemma~\ref{lem:face-mul}, $F$ is a face of 
$\Lambda_+^{(d-3)}$.
Furthermore, since the dimension of $F$ is at least two, $F$ is not an extreme ray.
Recalling that $\Lambda_+^{(d-3)}= (\Lambda_+^{(k)})^{(d-k-3)}$, applying Lemma~\ref{lem:face} to $\Lambda_+^{(k)}$ and
$(\Lambda_+^{(k)})^{(d-k-3)}$ we conclude that the face $F$ (which is not an extreme ray) must be a face of $\Lambda_+^{(k)}$.
Conversely, let $F \in \mathcal{F}_2^{(k)}$.
Since $F$ has dimension at least two, 
Lemma~\ref{lem:face} implies that $F$ is a face of $\Lambda_+$. Since it is strictly convex, $F \in \mathcal{F}_2$.

Therefore,  $\mathcal{F}_2 = \mathcal{F}_2^{(k)}$ holds.
Now, let $A \in \Aut(\Lambda_+^{(k)})$.
Since $A$ is a bijective linear map it preserves strict convexity and the dimension of cones. Therefore $
A\mathcal{F}_2^{(k)} = \mathcal{F}_2^{(k)}$ which leads to 
\begin{equation}\label{eq:af2}
A\mathcal{F}_2 = \mathcal{F}_2,
\end{equation}
that is, $A$ permutes the set of rank $2$ faces of $\Lambda_+$, which is $\mathcal{F}_2$ by items~\ref{lem:rank:1} and \ref{lem:rank:3} of Lemma~\ref{lem:rank}.

Let $E$ be an arbitrary extreme ray of $\Lambda_+$ which must, by assumption, have rank $1$ so it is generated by some $x\in \Lambda_+$ of rank $1$. Since the dimension of $\Lambda_+$ is at least $3$, there is another extreme ray generated by some $y \in \Lambda_+$ distinct from $E$. Now, the rank of 
$\Lambda_{+}$ is $d \geq 4$ and 
the face $F(x+y)$ has rank $2$ (item~\ref{lem:rank:2} of Lemma~\ref{lem:rank}), so we have $F(x+y) \neq \Lambda_{+}$.
In particular, we can find yet another extreme ray generated by some $z \in \Lambda_+$ such that $z \not \in F(x+y)$.
From  Lemma~\ref{lem:min_face}, we have  
$x,y \in F(x+y)=F(x,y)$ and $x,z \in F(x+z)=F(x,z)$. Since $x$, $y$ and $z$ all generate distinct extreme rays, the dimensions of $F(x,y)$ and $F(x,z)$ are both at least two.
Then, since $x+y$ and $x+z$ both have rank $2$ (item~\ref{lem:rank:2} of Lemma~\ref{lem:rank}), 
$F(x,y), F(x,z) \in \mathcal{F}_2$.

An intersection of faces is a face, so $F(x,y) \cap F(x,z)$ contains $E$ and is a face of both $\Lambda_+$ and $F(x,z)$.  Since $F(x,y) \cap F(x,z)$ is contained in $F(x,z)$ (which has rank $2$), in view of Lemma~\ref{lem:rank} and the strict convexity of $F(x,z)$,
$F(x,y) \cap F(x,z)$ is either $F(x,z)$ or $E$. Since $z \not \in F(x,y)$, we conclude that 
$F(x,y) \cap F(x,z) = E$.

Then, from \eqref{eq:af2} we have $AF(x,y), AF(x,z) \in \mathcal{F}_2$ and, since $A$ is an bijection, we have:
\[
A(E) = A(F(x,y)\cap F(x,z)) = (AF(x,y)) \cap (AF(x,z)).
\]
 
 In particular, $A(E) = (AF(x,y)) \cap (AF(x,z))$ is a face of $\Lambda_+$ (since it is an intersection of faces) with dimension $1$ (since $E$ has dimension $1$), so it is an extreme ray of $\Lambda_+$. We conclude that 
 $A$ maps an extreme ray of $\Lambda_+$ to another extreme ray of $\Lambda_+$. 
 Everything we have done so far also applies to $A^{-1}$, so we conclude that $A$ permutes the set of extreme rays of $\Lambda_+$.
 Since a pointed closed convex cone is the convex hull of its extreme rays, $A$ must be, in fact, an automorphism of $\Lambda_+$.	
\end{proof}

Next, we will  strengthen Theorem~\ref{theo:hyper_aut}.
Our result relies on the following G{\aa}rding's inequality.

\begin{lemma}[{G{\aa}rding \cite[Theorem 5]{Gar}}]
\label{lem:Garding}
Let $p:\Re^n\to \Re$ be a hyperbolic polynomial along $e$ and let $d=\deg p$.
Let $P(x_1,\ldots,x_d)$ be the polar form of $p$:
\begin{equation}\label{eq:polar}
P(x_1,\ldots,x_d) = \frac{1}{d!}\frac{\partial}{\partial t_1} \cdots \frac{\partial}{\partial t_d} p(t_1 x_1 + \cdots + t_d x_d) \Big|_{t_1=\cdots =t_d=0} = \frac{1}{d!}\nabla^d p(0)[x_1,\ldots,x_d].
\end{equation}
Then, for any $x_1,\ldots,x_d \in \reInt\Lambda_{+}(p,e)$, we have
$$
 p(x_1)^{\frac{1}{d}} \cdots p(x_d)^{\frac{1}{d}} \leq  P(x_1,\ldots,x_d).
$$
The equality holds if and only if $x_1,\ldots,x_d$ are pairwise proportional modulo $\Lambda_+\cap (-\Lambda_+)$.
\end{lemma}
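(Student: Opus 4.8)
The plan is to prove the inequality by induction on the degree $d=\deg p$, using as the main lever a special "one-versus-$(d-1)$" case together with a reduction obtained by differentiating $p$ in the direction of one of the $x_i$; the equality characterization will be read off from the same argument.

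\emph{Step 1: the one-versus-many inequality.} I would first prove that for all $x,y\in\reInt\Lambda_+(p,e)$,
\[
P(x,\underbrace{y,\ldots,y}_{d-1})\ \geq\ p(x)^{1/d}\,p(y)^{(d-1)/d},
\]
with equality if and only if $y-\mu x\in\Lambda_+\cap(-\Lambda_+)$ for some $\mu>0$. Since $x\in\reInt\Lambda_+$, $p$ is hyperbolic along $x$, so $t\mapsto p(y+tx)$ has only real roots; since $y\in\reInt\Lambda_+=\reInt\Lambda_+(p,x)$ those roots are strictly negative, say $-\mu_1,\ldots,-\mu_d$ with $\mu_j>0$, hence $p(y+tx)=p(x)\prod_{j=1}^d(t+\mu_j)$. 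Setting $t=0$ gives $p(y)=p(x)\prod_j\mu_j$; comparing the coefficient of $t^1$ and using that it equals $(D_x p)(y)=d\,P(x,y,\ldots,y)$ gives $P(x,y,\ldots,y)=\tfrac{p(y)}{d}\sum_j\mu_j^{-1}$. The AM--GM inequality applied to $\mu_1^{-1},\ldots,\mu_d^{-1}$ now yields the claim, with equality exactly when all $\mu_j$ are equal, i.e.\ $p(y+tx)=p(x)(t+\mu)^d$. Computing the eigenvalues of $y-\mu x$ with respect to the direction $x$ shows this forces $\mult(y-\mu x)=d$, which by \cite[Proposition~11]{Re06} is equivalent to $y-\mu x$ lying in the lineality space.

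\emph{Step 2: induction on $d$.} The case $d=1$ is trivial and $d=2$ is precisely Step 1. For $d\geq 3$, fix $x_1,\ldots,x_d\in\reInt\Lambda_+(p,e)$ and put $q\coloneqq D_{x_d}p$. Since $x_d\in\reInt\Lambda_+$, $p$ is hyperbolic along $x_d$, so $q$ is a hyperbolic polynomial of degree $d-1$ along $x_d$ with $\Lambda_+(p,e)\subseteq\Lambda_+(q,x_d)$; both cones being full-dimensional, $x_1,\ldots,x_{d-1}\in\reInt\Lambda_+(q,x_d)$. The polar form of $q$ is $\widetilde P(\,\cdot\,)=d\,P(x_d,\,\cdot\,)$ and $q(x_i)=d\,P(x_d,x_i,\ldots,x_i)$. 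Applying the induction hypothesis to $q$ and $x_1,\ldots,x_{d-1}$,
\[
d\,P(x_1,\ldots,x_d)=\widetilde P(x_1,\ldots,x_{d-1})\ \geq\ \prod_{i=1}^{d-1}q(x_i)^{1/(d-1)}=d\prod_{i=1}^{d-1}P(x_d,x_i,\ldots,x_i)^{1/(d-1)},
\]
and bounding each factor on the right below via Step 1, $P(x_d,x_i,\ldots,x_i)\geq p(x_d)^{1/d}p(x_i)^{(d-1)/d}>0$, followed by simplifying the exponents, collapses the product to $\prod_{i=1}^d p(x_i)^{1/d}$.

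\emph{Step 3: equality and the main obstacle.} If equality holds overall, then equality must hold in each use of Step 1 for the pair $(x_d,x_i)$, so $x_i-\mu_i x_d\in\Lambda_+\cap(-\Lambda_+)$ for some $\mu_i>0$ and every $i<d$; hence $x_1,\ldots,x_d$ are pairwise proportional modulo the lineality space. The converse is a direct computation: modulo the lineality space one may take $x_i=\lambda_i x$ with a common $x\in\reInt\Lambda_+$ and $\lambda_i>0$ (using that $p$, and hence $P$ in each slot, is invariant under translation by the lineality space), and then both sides equal $(\prod_i\lambda_i)\,p(x)$. I expect the most delicate parts to be (i) justifying that $q=D_{x_d}p$ is hyperbolic along a direction for which $x_1,\ldots,x_{d-1}$ lie in the relative interior of its cone — which rests on the interchangeability of interior hyperbolicity directions — and (ii) the equality analysis, in particular identifying "$p(y+tx)=p(x)(t+\mu)^d$" with membership in the lineality space and verifying the converse. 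An alternative route via Gårding's concavity theorem for $p^{1/d}$ plus AM--GM would hit the obstacle that recovering the multilinear coefficient $P(x_1,\ldots,x_d)$ from an estimate on $p(\sum_i t_i x_i)$ over the nonnegative orthant is itself subtle (a nonnegative homogeneous polynomial on the orthant need not have a nonnegative multilinear coefficient), so I would favour the inductive argument above.
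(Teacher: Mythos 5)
This lemma is stated in the paper as a quotation of G{\aa}rding's Theorem~5 and is not proved there, so there is no in-paper argument to compare against; your proof is correct and is essentially G{\aa}rding's original one. The two ingredients are exactly the classical ones: the identity $P(x,y^{d-1})=\tfrac{p(y)}{d}\sum_j\mu_j^{-1}$ combined with AM--GM for the one-versus-many case, and the induction on the degree via $q=D_{x_d}p$, whose polar form is $d\,P(x_d,\cdot\,)$ and whose cone contains $\Lambda_+(p,e)=\Lambda_+(p,x_d)$, so that $x_1,\ldots,x_{d-1}$ do lie in $\interior\Lambda_+(q,x_d)$ (hyperbolicity cones are full-dimensional, so relative interior equals interior here). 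The two points you flag as delicate are the only ones requiring outside input and both are standard: the interchangeability of interior hyperbolicity directions is \cite[Theorem~3]{Re06}, and the identification of ``all eigenvalues of $y$ in the direction $x$ equal $\mu$'' with $y-\mu x\in\Lambda_+\cap(-\Lambda_+)$, together with the invariance of $p$ (hence of each slot of $P$) under translation by the lineality space, is G{\aa}rding's characterization of the edge of the cone, cf.\ \cite[Proposition~11]{Re06}.
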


We can now prove the following result.
\begin{theorem}
\label{theo:hyper_aut_2}
Let $\Lambda_+=\Lambda_+(p,e)$ be a hyperbolicity cone with $d:=\deg p\geq 4$ and $\dim \Lambda_+ \geq 3$. Suppose that $\Lambda_+$ is regular and ROG with respect to $p$.
Then, we have for all $1\leq k \leq d-3$ that
\[
\Aut(\Lambda_+^{(k)}) = \{A \in \Aut(\Lambda_+) \mid A(\Re_+e)=\Re_+e \},
\]
where $\Lambda_+^{(k)} = \Lambda_+(D_e^kp,e)$ is the $k$-th derivative relaxation of $\Lambda_+$ with respect to $e$.
\end{theorem}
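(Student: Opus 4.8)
The plan is to prove the two inclusions separately. For the inclusion $\{A \in \Aut(\Lambda_+) \mid A(\Re_+e)=\Re_+e\} \subseteq \Aut(\Lambda_+^{(k)})$, suppose $A \in \Aut(\Lambda_+)$ with $Ae = \lambda e$ for some $\lambda > 0$. Since $p$ is minimal for $\Lambda_+$ by Proposition~\ref{prop:strict}, Proposition~\ref{prop:aut} gives a positive constant $\kappa$ with $p = \kappa(p \circ A)$. Differentiating $k$ times along $e$ and using $Ae = \lambda e$ together with the chain rule, one checks that $D_e^k p = \kappa \lambda^{-k}\, (D_e^k p) \circ A$ (up to the constant); more precisely $D_e^k(p\circ A)(x) = \lambda^k (D_e^k p)(Ax)$. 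Hence $D_e^k p$ and $(D_e^k p)\circ A$ are positive scalar multiples of each other. Since $Ae \in \Re_{++}e \subseteq \reInt \Lambda_+ \subseteq \reInt \Lambda_+^{(k)}$ and $D_e^k p$ is the minimal polynomial for $\Lambda_+^{(k)}$ (Proposition~\ref{prop:strict}), Proposition~\ref{prop:aut} applied to $\Lambda_+^{(k)}$ yields $A \in \Aut(\Lambda_+^{(k)})$.

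For the reverse inclusion, let $A \in \Aut(\Lambda_+^{(k)})$. By Theorem~\ref{theo:hyper_aut} we already know $A \in \Aut(\Lambda_+)$, so it remains only to show $A(\Re_+ e) = \Re_+ e$, i.e. that $e$ is an eigenvector of $A$. This is where G{\aa}rding's inequality (Lemma~\ref{lem:Garding}) enters and I expect it to be the main obstacle. The idea is to characterize the ray $\Re_+e$ intrinsically in terms of $\Lambda_+^{(k)}$ in a way that is manifestly preserved by $\Aut(\Lambda_+^{(k)})$. A natural route: the hyperbolic direction $e$, viewed inside the relaxation, is distinguished because $D_e^{d-1}p$ is a linear form vanishing exactly on a hyperplane supporting $\Lambda_+^{(d-1)}$, and more relevantly $e$ is the unique (up to scaling) point of $\reInt\Lambda_+$ that lies on all the derivative relaxations' ``innermost'' structure. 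Concretely, I would try to show that $\Re_+ e$ is the minimal face of $\Lambda_+$ containing the intersection of $\Lambda_+^{(k)}$ with a suitable canonically-defined subspace, or that $e$ maximizes a ratio like $D_e^k p(x)^{1/(d-k)} / D_e^{k+1}p(x)^{1/(d-k-1)}$ type expression — but cleaner is to use the equality case of G{\aa}rding directly.

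The cleanest approach I would pursue: both $p$ and $p \circ A$ are minimal polynomials for $\Lambda_+$, and both $D_e^k p$ and $D_e^k(p\circ A)$ are minimal for $\Lambda_+^{(k)}$, so by Proposition~\ref{prop:aut} (applied to the relaxation) there is $\kappa' > 0$ with $D_e^k p = \kappa'\, D_e^k(p \circ A)$. On the other hand, since $A\in\Aut(\Lambda_+)$ and $p$ is minimal, $p = \kappa\, (p\circ A)$. Writing $q := p \circ A$, which is hyperbolic along $e' := A^{-1}e \in \reInt\Lambda_+$ and also (being a positive multiple of $p$) hyperbolic along $e$, I want to compare $D_e^k q$ with $D_{e'}^k q = (D_e^k p)\circ A$ up to scaling. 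The crux: $D_e^k q = \kappa^{-1} D_e^k p$ and we also want to relate $D_e^k p$ to $D_{e'}^k q$. If $e$ and $e'$ are not proportional, I will derive a contradiction by evaluating both derivative polynomials and invoking the strict part of G{\aa}rding's inequality — roughly, $D_e^k p(x)$ can be written via the polar form $P$ as $\frac{d!}{(d-k)!} P(x,\dots,x,e,\dots,e)$ with $k$ copies of $e$, and the equality/strict-inequality dichotomy in Lemma~\ref{lem:Garding} forces the two hyperbolic directions defining the same relaxation polynomial to be proportional modulo the lineality space, which is $\{0\}$ by pointedness. Thus $e' = A^{-1}e \in \Re_{++}e$, i.e. $Ae \in \Re_+ e$, and since $A$ is invertible and $Ae \in \reInt\Lambda_+$ the scalar is positive, giving $A(\Re_+e) = \Re_+e$. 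I expect the delicate point to be making the G{\aa}rding equality-case argument rigorous: one must exhibit points $x_1,\dots,x_d \in \reInt\Lambda_+$ (some equal to $e$, some to $e'$, some free) where equality in Lemma~\ref{lem:Garding} is forced by the polynomial identity $D_e^k p = \kappa' D_{e'}^k q$ but where proportionality of all arguments fails unless $e \parallel e'$ — this may require first reducing to the case $d - k \geq 2$ (guaranteed since $k \leq d-3$, so $d-k \geq 3$) so that enough copies of the hyperbolic direction survive differentiation, and carefully tracking constants.
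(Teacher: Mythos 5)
Your overall strategy coincides with the paper's: the forward inclusion via differentiating $p=\kappa(p\circ A)$ $k$ times along $e$ and invoking Proposition~\ref{prop:aut} for $\Lambda_+^{(k)}$ (using minimality of $D_e^kp$ from Proposition~\ref{prop:strict}) is complete and correct, and the reverse inclusion is correctly reduced, via Theorem~\ref{theo:hyper_aut}, to showing $Ae\in\Re_{++}e$, with the equality case of G{\aa}rding's inequality identified as the right tool. The minor sign slip in the exponent of $\lambda$ is harmless since only positivity of the constant matters.

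However, in the reverse inclusion there is a genuine gap at exactly the step you flag as delicate: you never exhibit how the polynomial identity forces equality in Lemma~\ref{lem:Garding}. Having obtained (as you do, in the form $D_{e'}^kq=\kappa_1\kappa_2\,D_e^kq$, equivalently) the identity $\nabla^kp(z)[(Ae)^k]=\kappa\,\nabla^kp(z)[e^k]$ for all $z$ with $\kappa=(\kappa_1\kappa_2)^{-1}$, the missing mechanism is a specific two-sided specialization. Setting $z=te$ and extracting the coefficient of $t^{d-k}$ turns the identity into the \emph{exact} evaluation $P((Ae)^k,e^{d-k})=\kappa\,P(e^d)=\kappa\,p(e)$; G{\aa}rding then bounds the left side from below by $p(Ae)^{k/d}p(e)^{(d-k)/d}=\kappa_1^{-k/d}p(e)$, giving $\kappa\kappa_1^{k/d}\geq 1$. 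Setting $z=tAe$ and repeating yields $\kappa\,P(e^k,(Ae)^{d-k})=p(Ae)$ and, by the same inequality, $\kappa\kappa_1^{k/d}\leq 1$. Only after this pincer do you know $\kappa\kappa_1^{k/d}=1$, whence equality holds in G{\aa}rding's inequality for the tuple $((Ae)^k,e^{d-k})$, and the equality case plus pointedness (lineality space $\{0\}$) forces $Ae=\alpha e$ with $\alpha>0$. Without producing these two complementary evaluations, the assertion that ``the equality/strict-inequality dichotomy forces the two hyperbolic directions to be proportional'' is a statement of intent rather than a proof: a single application of G{\aa}rding only yields a one-sided inequality on the constant, and nothing yet forces the equality case. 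Note also that the constraint $k\leq d-3$ plays no role in this part beyond guaranteeing $1\leq k$ and $d-k\geq 1$ so that both $e$ and $Ae$ genuinely appear in the tuple; its real purpose is to make Theorem~\ref{theo:hyper_aut} applicable.
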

\begin{proof}
We recall that, by Proposition~\ref{prop:strict}, $p$ and $D^k_ep$ are minimal degree polynomials for $\Lambda_+$ and $\Lambda_+^{(k)}$, respectively.

We first prove the  ``$\supseteq$''. Suppose that $A \in \Aut(\Lambda_+)$ and $Ae = \alpha e$ for some $\alpha>0$. Then, Proposition~\ref{prop:aut} implies that
there exists $\kappa > 0$ such that
\[
p = \kappa \cdot p \circ A
\]
Observe that 
\[
D_e^k p(x) = \frac{d^k p(x+te)}{dt^k} \Big|_{t=0} = \nabla^k p(x)[e^k],
\]
where $e^k$ stands for the tuple $(e,\ldots,e)$ of length $k$.
By taking the $k$-th derivative on the both sides of $p = \kappa \cdot p\circ A$ and using $Ae=\alpha e$, we have
\[
D_e^kp(x) \equiv \nabla^k p(x)[e^k] = \kappa \nabla^k p(Ax)[{(Ae)^k}] = \kappa \alpha^k \nabla^k p(Ax)[e^k] = \kappa \alpha^k ((D_e^k p)\circ A)(x).
\]
Combining with the fact $Ae = \alpha e \in \reInt \Lambda_+^{(k)}$ and the minimality of $D_e^kp$, we invoke Proposition~\ref{prop:aut} in order to conclude that $A \in \Aut(\Lambda_+^{(k)})$.

Next, we show the ``$\subseteq$'' inclusion. Let $A \in \Aut(\Lambda_+^{(k)})$.
By Theorem~\ref{theo:hyper_aut}, we also have $A \in \Aut(\Lambda_+)$. So it remains to prove that $Ae = \alpha e$ for some $\alpha>0$.

Proposition~\ref{prop:aut} implies that there exist $\kappa_1,\kappa_2>0$ satisfying
$$
p = \kappa_1 \cdot p \circ A,\qquad \kappa_2 \cdot D_e^k p =  (D_e^k p) \circ A.
$$
Computing the $k$-th derivative of $p=\kappa_1 \cdot p\circ A$ yields $D_e^k p(x) = \kappa_1  D_e^k (p\circ A)$. This leads to
\[
(D_e^k p)\circ A = \kappa_2 \cdot D_e^k p = \kappa_1 \kappa_2 D_e^k (p \circ A),
\]
in other words,
\[
\nabla^k p(Ax)[e^k] = \kappa_1\kappa_2 \nabla^k p(Ax)[(Ae)^k],\quad \forall x \in \Re^n.
\]
As $A$ is nonsingular, we obtain
\begin{equation}\label{eq:hyper_aut_2_proof}
\nabla^k p(z)[(Ae)^k] = \kappa \nabla^k p(z)[e^k],\quad \forall z \in \Re^n,
\end{equation}
where $\kappa:=(\kappa_1 \kappa_2)^{-1}>0$.

Let $P(x_1,\ldots,x_d)=\nabla^d p(0)[x_1,\ldots,x_d]/d!$ be the polar form of $p$ as defined in \eqref{eq:polar}.
Now setting $z=te$  in \eqref{eq:hyper_aut_2_proof} and calculating $d^{d-k} / dt^{d-k}$ at $t=0$ on the both sides, one has
\begin{align}
P((Ae)^k,e^{d-k}) &= \nabla^d p(0)[(Ae)^k,e^{d-k}]/d! = \kappa \nabla^d p(0)[e^d]/d! = \kappa \cdot P(e,\ldots,e) \notag\\
&=\kappa  \cdot p(e)^{\frac{d}{d}} \quad (\text{by Lemma~\ref{lem:Garding}}) \notag\\
&= \kappa \kappa_1^{\frac{k}{d}}  \cdot p(e)^{\frac{d-k}{d}}  p(Ae)^{\frac{k}{d}} \label{eq:hyper_aut_2_proof_2}\\
&\leq \kappa \kappa_1^{\frac{k}{d}} P((Ae)^k,e^{d-k}) \quad (\text{by Lemma~\ref{lem:Garding}}). \notag
\end{align}
Dividing  both sides by $P((Ae)^k, e^{d-k})$ ($=\kappa \cdot p(e)>0$), we see that
$$
\kappa \kappa_1^{\frac{k}{d}} \geq 1.
$$
Similarly, setting $z = tAe$ in \eqref{eq:hyper_aut_2_proof} and calculating $d^{d-k} / dt^{d-k}$ at $t=0$ on the both sides, one has
\begin{align*}
\kappa P(e^k,(Ae)^{d-k}) &= \kappa \nabla^d p(0)[e^k,(Ae)^{d-k}]/d! = \nabla^d p(0)[(Ae)^d]/d! = P(Ae,\ldots,Ae) \\
&=p(Ae)^{\frac{d}{d}} \quad (\text{by Lemma~\ref{lem:Garding}}) \\
&= \kappa_1^{-\frac{k}{d}}p(e)^{\frac{k}{d}} p(Ae)^{\frac{d-k}{d}}\\
&\leq \kappa_1^{-\frac{k}{d}} P(e^k,(Ae)^{d-k}) \quad (\text{by Lemma~\ref{lem:Garding}}).
\end{align*}
Since $\kappa P(e^k,(Ae)^{d-k})=p(Ae)=p(e)/\kappa_1$ is positive,
this yields $\kappa\kappa_1^{\frac{k}{d}} \leq 1$ and so $\kappa\kappa_1^{\frac{k}{d}} = 1$ holds. Then,  \eqref{eq:hyper_aut_2_proof_2} becomes
\[
P((Ae)^k,e^{d-k}) = p(e)^{\frac{d-k}{d}}  p(Ae)^{\frac{k}{d}}.
\]
By Lemma~\ref{lem:Garding} and the pointedness of $\Lambda_+$, this occurs only when $Ae = \alpha e$ for some $\alpha > 0$.
\end{proof}

The restrictions on $d$ and $k$ in  Theorem~\ref{theo:hyper_aut_2} only leave out the derivative relaxations such that $D_e^kp$ has degree less or equal than $2$. 
Also, the ROG assumption is essential for our results. In the next example, we see that our results do not hold for the $\ell_1$-cone.

\begin{example}[$\ell_1$-cone]\label{ex:l1}
Consider the $\ell_1$-cone in $\Re^3$
$$
\{x \in \Re^3 ~|~ x_3 \geq |x_1|+|x_2| \} = \{x ~|~ x_3 + (-1)^{i}x_1 + (-1)^j x_2  \geq 0,~\forall i,j\in \{0,1\}\},
$$
which is a hyperbolicity cone $\Lambda_+ = \Lambda_+(p,e)$ with
$$p(x)=(x_3 + x_1 + x_2) (x_3 + x_1 - x_2) (x_3 - x_1 + x_2) (x_3 - x_1 - x_2),\quad e = (0,0,1)^T. $$
{The polynomial $p$ is of minimal degree defining $\Lambda_+$.
Indeed, note that any minimal degree polynomial, say $q$, must be a divisor of $p$ in the polynomial ring $\Re[x]$ by Proposition~\ref{prop:hv}. Since polynomials of degree one are irreducible and $\Re[x]$ is, in particular, an unique factorization domain, $q$ must be a product of some of the four linear polynomials $p_{ij}(x)=x_3 + (-1)^{i}x_1 + (-1)^j x_2$ and a constant. In particular, $\Lambda_+(q,e)$ is the intersection of the corresponding half-spaces $p_{ij}(x)\geq 0$, which coincides with $\Lambda_+(p,e)$ only when $p=\kappa q$ for some constant $\kappa$.
}

We remark that $\Lambda_+$ is not ROG since every extreme ray of $\Lambda_+$ has rank two.
The derivatives of $p$ are given by
\[
D_e p(x)=4 x_3(x_3^2-x_1^2-x_2^2),\quad D_e^2p(x)=4 (3 x_3^2-x_1^2 - x_2^2).
\]
The derivative relaxation $\Lambda_+^{(1)}$ is the second-order cone in $\Re^3$  and the factor $x_3$ in $D_ep$ is redundant, in particular $D_e p(x)$ is not of minimal degree and shows that Proposition~\ref{prop:strict} may fail if the cone is not ROG.
Since $\Lambda_+^{(1)}$  is a symmetric cone, for any $\hat e \in \reInt \Lambda_+^{(1)}$, there exists $A \in \Aut(\Lambda_+^{(1)})$ such that $Ae = \hat e$. 
In addition, $\reInt \Lambda_+ \subseteq \reInt(\Lambda_+^{(1)})$, which is a consequence of the interlacing properties between the eigenvalues with respect to $p$ and $D_ep$, e.g., \cite[Section~4]{Re06}.
Since  $\Lambda_+ \neq \Lambda_+^{(1)}$, there exists $\hat e \in \reInt \Lambda_+^{(1)} $ such that $ \hat e \not \in \Lambda_+$. 

Letting $A \in \Aut(\Lambda_+^{(1)})$ be such that $Ae = \hat e$, $A$ does not fix $e$ nor belong to $\Aut(\Lambda_+)$, which shows that both Theorems~\ref{theo:hyper_aut} and \ref{theo:hyper_aut_2} fail for the $\ell_1$-cone.

\end{example}

\paragraph{Generalized Perron-Frobenius Theorem and a converse of Theorem~\ref{theo:hyper_aut_2}}
Theorem~\ref{theo:hyper_aut_2} tells us that, subject to a condition on $k$,  automorphisms of the $k$-th derivative relaxation of a regular ROG hyperbolicity cone are the automorphisms of the original cone that have the hyperbolic direction $e$ as an eigenvector. 
We will close this section with a converse of sorts. We will show that, reciprocally, every automorphism of a ROG hyperbolicity cone must already be the automorphism of some derivative relaxation. The  caveat is that the derivative relaxation in which the automorphism will be found may be a relaxation of a \emph{face} of the cone.

In order to do that, we need a discussion on the generalized Perron-Frobenius theorem.
The classical Perron-Frobenius theorem implies that a nonzero $n\times n$ nonnegative matrix has a nonnegative eigenvector associated to a positive eigenvalue. This can be summarized by 
saying that the condition  
$A\Re^n_+ \subseteq \Re^n_+$ (i.e., $A$ is nonnegative) implies that $A$ has an eigenvector that belongs to $\Re^n_+$ as well.
This result has several generalizations where 
$\Re^n_+$ is replaced with an arbitrary closed convex cone, see \cite{Van68,BS75} and 
\cite[Chapter~1]{BP94}.
In particular, the following holds.

\begin{theorem}[{\cite[Theorem~3.1]{Van68}}]\label{theo:pf}
Let $\stdCone \subseteq \Re^n$ be a regular closed convex cone and suppose that $A$ is a $n\times n$ real matrix satisfying $A \stdCone \subseteq \stdCone$. Then, $\stdCone$ contains an eigenvector of $A$ corresponding to the spectral radius of $A$ (i.e., the maximum of the absolute values of the eigenvalues of $A$). 
\end{theorem}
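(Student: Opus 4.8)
The plan is to run the classical resolvent-plus-compactness argument behind the Krein--Rutman / generalized Perron--Frobenius theorem. After rescaling $A$ by $1/\rho$ when $\rho:=\rho(A)>0$, it suffices to handle the normalized case $\rho(A)=1$; the degenerate case $\rho(A)=0$ I will reduce to this one at the end by a perturbation. Throughout I exploit regularity of $\stdCone$ to build a \emph{compact} base: pick $u\in\interior\stdCone^*$ and $x_0\in\interior\stdCone$, and set $\Omega:=\{x\in\stdCone\mid\langle x,u\rangle=1\}$. Then $\stdCone=\Re_{++}\Omega\cup\{0\}$ and $\Omega$ is compact --- this is exactly where full-dimensionality and pointedness enter, since a sequence in $\stdCone$ escaping to infinity with $\langle\cdot,u\rangle$ bounded would, after normalization, yield a nonzero $d\in\stdCone$ with $\langle d,u\rangle\le 0$, impossible for $u$ in the interior of $\stdCone^*$.

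Now assume $\rho(A)=1$. For $r>1$ the Neumann series gives $R_r:=(rI-A)^{-1}=\sum_{k\ge 0}r^{-k-1}A^k$, and since $A^k\stdCone\subseteq\stdCone$ we get $R_r\stdCone\subseteq\stdCone$, with in fact $R_rx_0\in\interior\stdCone$ (the $k=0$ term $r^{-1}x_0$ is interior). Put $c_r:=\langle R_rx_0,u\rangle>0$ and $v_r:=R_rx_0/c_r\in\Omega$; then the defining identity $(rI-A)R_rx_0=x_0$ becomes the key relation $r\,v_r-A\,v_r=x_0/c_r$. Writing $a_k:=\langle A^kx_0,u\rangle\ge 0$, monotone convergence shows $c:=\lim_{r\downarrow 1}c_r=\sum_{k\ge 0}a_k$ exists in $(0,\infty]$, and I split on this value.

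If $c=\infty$, choose $r_j\downarrow 1$ and, using compactness of $\Omega$, pass to a subsequence with $v_{r_j}\to\bar v\in\Omega$; letting $j\to\infty$ in $r_jv_{r_j}-Av_{r_j}=x_0/c_{r_j}$ gives $\bar v-A\bar v=0$, so $\bar v\in\stdCone\setminus\{0\}$ is an eigenvector for the eigenvalue $1=\rho(A)$, which (undoing the rescaling) is the desired conclusion. If instead $c<\infty$, then $a_k\to 0$; since $A^kx_0\in\stdCone$ with $\langle A^kx_0,u\rangle=a_k$, compactness of the base forces $A^kx_0\to 0$. Because $x_0$ is interior, every $y\in\stdCone$ admits $\beta>0$ with $\beta x_0-y\in\stdCone$, whence $0\le_{\stdCone}A^ky\le_{\stdCone}\beta A^kx_0$; comparing with $u$ and again using the base, $A^ky\to 0$ for all $y\in\stdCone$, hence $A^k\to 0$ since $\stdCone$ spans $\Re^n$, hence $\rho(A)<1$ --- contradicting $\rho(A)=1$. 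So the alternative $c<\infty$ is vacuous and we always obtain the eigenvector.

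Finally, for $\rho(A)=0$ I perturb: $A+\varepsilon I$ maps $\stdCone$ into itself and has all eigenvalues equal to $\varepsilon$, so $\rho(A+\varepsilon I)=\varepsilon>0$; the case just proved yields $v_\varepsilon\in\Omega$ with $(A+\varepsilon I)v_\varepsilon=\varepsilon v_\varepsilon$, i.e. $Av_\varepsilon=0$, and letting $\varepsilon\downarrow 0$ with a limit point $\bar v\in\Omega$ gives $A\bar v=0$ with $\bar v\in\stdCone\setminus\{0\}$, an eigenvector for $0=\rho(A)$. The step I expect to be the real crux --- the only one using the hypotheses beyond soft compactness --- is excluding the ``bounded resolvent'' branch $c<\infty$: upgrading $A^kx_0\to 0$ to $A^k\to 0$ relies on normality of $\stdCone$ (boundedness of order intervals) together with $x_0$ being interior. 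Everything else is bookkeeping with the base $\Omega$ and the Neumann series.
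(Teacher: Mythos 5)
Your proof is correct. Note, however, that the paper does not prove this statement at all: it is imported verbatim as \cite[Theorem~3.1]{Van68}, so there is no in-paper argument to compare against. Your resolvent argument is the standard Krein--Rutman-style route (Neumann series $(rI-A)^{-1}=\sum_k r^{-k-1}A^k$ preserving $\stdCone$, a compact base cut out by some $u\in\interior\stdCone^*$, and the dichotomy on $c=\sum_k\langle A^kx_0,u\rangle$), whereas Vandergraft's original proof works through the Jordan canonical form, extracting the eigenvector as a limit of normalized powers $A^k x$ along the dominant Jordan block. Your version buys a self-contained, coordinate-free argument that generalizes to infinite dimensions with compactness hypotheses; the Jordan-form route buys finer information (e.g.\ about the order of the pole of the resolvent at $\rho(A)$ and the degree of the spectral radius as a root of the minimal polynomial) that Vandergraft exploits in later theorems of that paper but that is not needed here. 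All the individual steps check out: the exclusion of the branch $c<\infty$ via $A^k\to 0\Rightarrow\rho(A)<1$ is sound (an eigenvalue of modulus $1$ would obstruct $A^k\to 0$), the domination $\beta A^kx_0 - A^ky\in\stdCone$ correctly uses $x_0\in\interior\stdCone$, full-dimensionality is used exactly where needed ($\stdCone$ spans $\Re^n$), and the nilpotent case reduces cleanly to the positive-radius case via $A+\varepsilon I$ (where in fact a single $\varepsilon$ already yields $Av_\varepsilon=0$, so the final limit is unnecessary but harmless).
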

An immediate consequence of Theorem~\ref{theo:pf} is that if $A$ is an automorphism of $\stdCone$, then $A$ has an eigenvector contained in $\stdCone$. In fact, a bit more can be said about this.

\begin{proposition}\label{prop:min_face_eig}
Let $\stdCone \subseteq \Re^n$ be a regular closed convex cone and let $A \in \Aut(\stdCone)$. If $z \in \stdCone$ is an eigenvector of $A$, then $A(\stdFace(z)) = \stdFace(z)$.
Put otherwise, $A \in \Aut(\stdFace(z))$ holds.
\end{proposition}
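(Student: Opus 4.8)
The plan is to exploit the characterization \eqref{eq:fs} of the minimal face together with the fact that an automorphism maps faces to faces bijectively. Write $z$ for the eigenvector, so $Az = \mu z$ for some $\mu \in \Re$; since $z \in \stdCone$ and $A \stdCone = \stdCone$, we have $\mu z = Az \in \stdCone$, and because $\stdCone$ is pointed and $z \neq 0$ (an eigenvector is nonzero by definition), we must have $\mu > 0$. Thus $\Re_+ z$ is $A$-invariant and, more to the point, $z$ and $Az$ generate the same ray.

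First I would recall that for any $A \in \Aut(\stdCone)$ and any face $\stdFace \face \stdCone$, the image $A\stdFace$ is again a face of $\stdCone$: this is immediate from the definition of face, since $A$ is a linear bijection preserving $\stdCone$, and for $x+y \in A\stdFace$ with $x,y \in \stdCone$ we get $A^{-1}x + A^{-1}y \in \stdFace$, hence $A^{-1}x, A^{-1}y \in \stdFace$, i.e.\ $x,y \in A\stdFace$. Moreover a linear bijection commutes with taking relative interiors, so $\reInt(A\stdFace) = A(\reInt \stdFace)$.

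Next I would apply the minimality characterization. By \eqref{eq:fs}, $\stdFace(z)$ is the unique face of $\stdCone$ with $z \in \reInt \stdFace(z)$ (using that $\reInt\{\text{ray through }z\}$ is the open ray, which meets $\reInt \stdFace(z)$). Apply $A$: since $A\stdFace(z)$ is a face of $\stdCone$ and $Az = \mu z \in A(\reInt \stdFace(z)) = \reInt(A\stdFace(z))$, and $\mu z$ generates the same ray as $z$, we conclude $z \in \reInt(A\stdFace(z))$ as well. By the uniqueness part of \eqref{eq:fs} (equivalently \eqref{eq:faces_eq}, since $\reInt \stdFace(z)$ and $\reInt(A\stdFace(z))$ both contain the point $z$ and are thus non-disjoint), $A\stdFace(z) = \stdFace(z)$, which is exactly the assertion $A \in \Aut(\stdFace(z))$.

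There is essentially no hard step here — the only thing to be slightly careful about is confirming $\mu > 0$ so that $z$ and $Az$ genuinely generate the same extreme ray (not the opposite ray, which pointedness rules out anyway), and invoking the uniqueness of the minimal face correctly. Everything else is the routine observation that automorphisms of a cone permute its faces and respect relative interiors. If one wants, Theorem~\ref{theo:pf} can be cited just to motivate why such an eigenvector $z \in \stdCone$ exists in the first place, but the proposition itself takes $z$ as given and needs only \eqref{eq:faces_eq}--\eqref{eq:fs}.
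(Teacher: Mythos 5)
Your proof is correct and follows essentially the same route as the paper's: show $A\stdFace(z)$ is a face whose relative interior contains $z$ (using $\reInt(A\stdFace(z)) = A(\reInt\stdFace(z))$ and the positivity of the eigenvalue), then conclude $A\stdFace(z)=\stdFace(z)$ from \eqref{eq:faces_eq}. The extra care you take in justifying $\mu>0$ and that automorphisms map faces to faces is fine but matches what the paper does implicitly.
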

\begin{proof}
$A$ is an automorphism, so $A\stdFace(z)$ must be a face of $\stdCone$ as well. Since $A$ is a linear map, $\reInt (A (\stdFace(z))) = A (\reInt \stdFace(z))$ holds, e.g. \cite[Theorem~6.6]{RT97}. 
Then, since $z \in \reInt \stdFace(z)$ (see \eqref{eq:fs}) and $z$ is an eigenvector of $A$ (which is associated to a positive eigenvalue since $\stdCone$ is pointed), we obtain that 
$z \in \reInt (A \stdFace(z))$. 
We conclude that the relative interiors of the faces $\stdFace(z)$ and $A (\stdFace(z))$ intersect, so they must coincide by \eqref{eq:faces_eq}, i.e., $A \stdFace(z) = \stdFace(z)$.
\end{proof}

Gathering everything, we obtain the following converse of Theorem~\ref{theo:hyper_aut_2}.

\begin{theorem}\label{theo:aut_converse}
Let $\Lambda_+  = \Lambda_+(p,e) \subseteq \Re^n$ be a regular ROG hyperbolicity cone and let $A \in \Aut(\Lambda_+)$. Let $z \in \Lambda_+$ be an eigenvector of $A$ (at least one exists by Theorem~\ref{theo:pf}) and let $\stdFace(z) \face \Lambda_+$ be the minimal face of $\Lambda_+$ that contains $z$. The following statements hold.
\begin{enumerate}[$(i)$]
	\item\label{theo:aut_converse:1} $A \in \Aut(\stdFace(z))$.
	\item\label{theo:aut_converse:2} Let $m$ be the multiplicity of $z$ with respect to $p$ and let $q$ be as in Proposition~\ref{prop:faces}, so that $\stdFace(z) = \Lambda_+(q,z)$. If $d-m \geq 4$ then 	for $1 \leq k \leq d-m-3$, we have
	\[
	A \in \Aut(\Lambda_+^{(k)}(q,z)),
	\]  
	where $\Lambda_+^{(k)}(q,z)$ is the $k$-th derivative relaxation of $\Lambda_+(q,z)$ along the direction $z$.
\end{enumerate}
\end{theorem}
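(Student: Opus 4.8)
The strategy is to combine the generalized Perron--Frobenius machinery of Proposition~\ref{prop:min_face_eig} with the structural results already established for ROG cones and their faces, and then apply Theorem~\ref{theo:hyper_aut_2} to the face $\stdFace(z)$ viewed as a ROG cone in its own span. Item~\ref{theo:aut_converse:1} is immediate: by Theorem~\ref{theo:pf} the automorphism $A$ has an eigenvector $z \in \Lambda_+$, and Proposition~\ref{prop:min_face_eig} (applied with $\stdCone = \Lambda_+$) gives $A(\stdFace(z)) = \stdFace(z)$, i.e.\ the restriction of $A$ to $\spanVec \stdFace(z)$ is an automorphism of $\stdFace(z)$. (One should note $A$ preserves $\spanVec \stdFace(z)$ since it preserves $\stdFace(z)$, and the restriction is invertible on that subspace.)

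For item~\ref{theo:aut_converse:2}, the first step is to set up $\stdFace(z)$ as a regular ROG hyperbolicity cone inside $\spanVec \stdFace(z)$. By Proposition~\ref{prop:faces}, with $m = \mathrm{mult}(z)$ and $q$ the restriction of $D_e^m p$ to $\spanVec \stdFace(z)$, we have $\stdFace(z) = \Lambda_+(q,z)$, and $q$ is hyperbolic along $z$ of degree $d - m$. By Proposition~\ref{prop:faces_are_rog}, $\stdFace(z)$ is ROG with respect to $q$ and $z$. It is regular in its own span by construction (it is pointed as a subset of the pointed cone $\Lambda_+$, and full-dimensional in $\spanVec \stdFace(z)$). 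The dimension hypothesis needed by Theorem~\ref{theo:hyper_aut_2}, namely $\dim \stdFace(z) \geq 3$, should follow from $d - m = \deg q \geq 4$ together with the chain-of-faces argument in Proposition~\ref{prop:chain2} (a ROG cone of rank $r$ contains a chain of $r+1$ faces, forcing $\dim \stdFace(z) \geq r = d-m \geq 4 > 3$; alternatively one can argue the dimension directly). Then $A|_{\spanVec \stdFace(z)} \in \Aut(\Lambda_+(q,z))$ by part~\ref{theo:aut_converse:1}.

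The key remaining point is that $A$ fixes the ray $\Re_+ z$, i.e.\ $A(\Re_+ z) = \Re_+ z$: this is exactly the hypothesis $z$ is an eigenvector of $A$, and the eigenvalue is positive because $z \in \stdCone$ and $\stdCone$ is pointed (this positivity is already observed inside the proof of Proposition~\ref{prop:min_face_eig}). Therefore $A|_{\spanVec \stdFace(z)}$ lies in $\{B \in \Aut(\Lambda_+(q,z)) \mid B(\Re_+ z) = \Re_+ z\}$, which by Theorem~\ref{theo:hyper_aut_2} applied to the ROG cone $\Lambda_+(q,z)$ (with hyperbolic direction $z$, polynomial $q$, degree $d-m \geq 4$) equals $\Aut(\Lambda_+^{(k)}(q,z))$ for every $k$ with $1 \leq k \leq (d-m)-3$. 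This gives $A \in \Aut(\Lambda_+^{(k)}(q,z))$ as claimed.

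The main obstacle I anticipate is bookkeeping about \emph{where} the linear maps live: Theorem~\ref{theo:hyper_aut_2} is a statement about automorphisms of a cone realized in its own ambient full-dimensional space, whereas $A$ is given as a map on $\Re^n$. One must be careful to restrict $A$ to $\spanVec \stdFace(z)$, verify this restriction is a genuine linear bijection of that subspace, and then interpret the conclusion $A|_{\spanVec \stdFace(z)} \in \Aut(\Lambda_+^{(k)}(q,z))$ back as the statement ``$A \in \Aut(\Lambda_+^{(k)}(q,z))$'' with $\Lambda_+^{(k)}(q,z)$ regarded as a subset of $\spanVec \stdFace(z) \subseteq \Re^n$. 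A secondary subtlety is confirming that all the hypotheses of Theorem~\ref{theo:hyper_aut_2} (regularity, ROGness, $\deg q \geq 4$, $\dim \geq 3$) genuinely transfer to $\Lambda_+(q,z)$, which is where Propositions~\ref{prop:faces}, \ref{prop:faces_are_rog}, and \ref{prop:chain2} do the work.
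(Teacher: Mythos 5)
Your proposal is correct and follows essentially the same route as the paper: Proposition~\ref{prop:min_face_eig} for item~(i), then Propositions~\ref{prop:faces} and \ref{prop:faces_are_rog} to realize $\stdFace(z)$ as a regular ROG hyperbolicity cone in its span, and finally Theorem~\ref{theo:hyper_aut_2} applied to $\Lambda_+(q,z)$ together with the positivity of the eigenvalue of $z$. Your extra care about the dimension hypothesis $\dim\stdFace(z)\geq 3$ (via the chain-of-faces argument) and about restricting $A$ to $\spanVec\stdFace(z)$ is a welcome but minor elaboration of details the paper leaves implicit.
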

\begin{proof}
Item~\ref{theo:aut_converse:1} follows from Proposition~\ref{prop:min_face_eig}, so we move on to item~\ref{theo:aut_converse:2}.

In view of Propositions~\ref{prop:faces}  and \ref{prop:faces_are_rog}, $\stdFace$ is ROG with respect to $q$ and $z$. Since $\Lambda_+$ is pointed, $\stdFace$ is also pointed, so, with respect to $\spanVec \stdFace$, $\stdFace$ is a regular ROG hyperbolicity cone.
Therefore, we can apply Theorem~\ref{theo:hyper_aut_2} to $\Lambda_+(q,z)$, which leads to 
\[
\Aut(\Lambda_+^{(k)}(q,z)) = \{\hat A \in \Aut(\Lambda_+(q,z)) \mid \hat A(\Re_+z)=\Re_+z \},
\]
for $k $ satisfying $1 \leq k \leq d - m -3 $. In particular, since $z$ is an eigenvector of $A$ and $\Lambda_+$ is pointed, we have $A(\Re_+ z) = \Re_+z$, so 
$A \in \Aut(\Lambda_+^{(k)}(q,z)) $ holds.
\end{proof}
Theorem~\ref{theo:hyper_aut_2} and Theorem~\ref{theo:aut_converse} taken together can be summarized as follows. 
The automorphisms of the derivative relaxations of order $1 \leq k \leq d -3$ of a regular ROG hyperbolicity cone $\Lambda_+(p,e)$ are exactly the automorphisms of $\Lambda_+(p,e)$ which have $e$ as an eigenvector. 
Conversely, in view of Proposition~\ref{prop:min_face_eig}, every automorphism $A$ of $\Lambda_+(p,e)$ must also be an automorphism of at least one non-zero face $\stdFace$ of $\Lambda_+(p,e)$ containing an eigenvector $z$ of $A$ in its relative interior. Such a face is also a hyperbolicity cone (Proposition~\ref{prop:faces}) and $A$ must also be an automorphism of the derivative relaxations of order $1 \leq k \leq d-m-3$ of $\stdFace$ along  $z$.

The case where $m = 0$ in Theorem~\ref{theo:aut_converse} is noteworthy. In this case, the automorphism $A$ has an eigenvector $z$ in the interior of $\Lambda_+(p,e)$ and the derivative relaxations that appear in Theorem~\ref{theo:aut_converse} are relaxations of the original cone $\Lambda_+(p,e)$ along the interior direction $z$.

Furthermore, if $A$ has a \emph{single} eigenvector $z$ in $\Lambda_+$ (up to scalar multiples) and $z$ is in the interior of $\Lambda_+$, then \cite[Theorem~4.2]{Van68} or \cite[Chapter~1, Theorem~3.16]{BP94} implies that $A$ is what is called a \emph{$\Lambda_+$-irreducible matrix}, which means that for a face $\stdFace\face \Lambda_+$ then $A\stdFace \subseteq \stdFace$ never holds except for  $\stdFace = \{0\}$ or $\stdFace = \Lambda_+$. 
The notion of $\stdCone$-irreducible matrix for an arbitrary cone $\stdCone$ generalizes the concept of \emph{irreducible matrix} that appear in the classical Perron-Frobenius theory.
In our particular case, since $A$ is an automorphism, $\Lambda_+$-irreducibility means that $A$ permutes the set of faces of $\Lambda_+$ but fixes no face except the trivial ones $\{0\}$ and $\Lambda_+$.

\section{Applications}\label{sec:app}
In this section, we collect a few applications of the results so far.
\subsection{Automorphisms of $\Re_+^{n,(k)}$ and $\S_+^{n,(k)}$ }
In this subsection, we take a closer look at the derivative relaxations of 
$\Re^n_+$ and $\PSDcone{n}$. 
These cones and other closely related objects form an interesting test-bed for ideas and conjectures about hyperbolic polynomials and have been studied by many authors \cite{Zin08,Sa13,Br14,SP15,Sa18,Kum21}, with quite a few works devoted to questions related to their spectrahedral representability in connection with the generalized Lax conjecture. 

The nonnegative orthant $\Re_+^n$ can be realized as a hyperbolicity cone as follows
$$\Re_+^n = \Lambda_+(p,e),\quad p(x)=x_1x_2\cdots x_n,\quad e = (1,1,\ldots,1)^T.$$
Then, the following fact is well-known\footnote{This follows from the fact that an automorphism of $\Re^n_+$ must permute the $n$ extreme rays of $\Re^n_+$ and those extreme rays are generated by the usual coordinate basis $e_1,\ldots, e_n$. 
	Alternatively, since $\Re^n_+$ is the direct product of $n$ copies of $\Re_+$, the result can be derived from general results on automorphisms of direct sums of cones, see \cite[Section~4 and Lemma~4.1]{Horne78}.}:
$$
\Aut(\Re_+^n) = \{\Diag(c_1,\ldots,c_n) P \mid c_1,\ldots,c_n > 0,~ P \text{ is a permutation matrix} \}.
$$
The derivative $D_e^kp(x)$ for $k\geq 0$ is a positive multiple of the elementary symmetric polynomial of degree $n-k$, that is,
$$
D_e^kp(x) = k! \, s_{n-k}(x), \text{ where } s_d(x) = \sum_{1\leq i_1<\cdots<i_d \leq n} x_{i_1}\cdots x_{i_d}.
$$
By \eqref{eq:hyp_deriv}, the $k$-th derivative relaxation $\Re_+^{n,(k)}\coloneqq\Lambda_+^{(k)}$ has the description
\begin{equation}\label{eq:nonneg-orth-deriv}
\Re_+^{n,(k)} = \{x \in \Re^n \mid s_i(x) \geq 0,~i=1,\ldots,n-k\},\quad k=1,\ldots,n-1.
\end{equation}
We remark that $\Re_+^{n,(n-1)}$ for $n\geq 2$ is the half-space $\{x \in \Re^n \mid x_1 + \cdots + x_n \geq 0\}$.

{
Moreover, $\Re_+^{n,(n-2)}$ is linearly isomorphic to the Lorentz cone $\SOC{2}{n}$ as $D_e^{n-2}p$ is quadratic and the corresponding matrix has one positive eigenvalue and $n-1$ negative eigenvalues (see \cite[pg.~486]{BGLS01} for a related discussion). In particular, the automorphism group of $\SOC{2}{n}$ has the following characterization \cite{LS75}: For $A \in \GL_n(\Re)$, we have
\begin{equation}\label{eq:aut-SOC}
 A \SOC{2}{n} = \SOC{2}{n} \text{ or } A \SOC{2}{n} = -\SOC{2}{n}
 ~~ \iff ~~
 A^T J A = \mu J \text{ for some } \mu>0,
\end{equation}
where $J = \Diag(-1,1,\ldots,1)$.
Now let $\hat{J} \in \S^n$ be the symmetric matrix satisfying $s_2(x)=x^T\hat{J}x$.
For $\Re_+^{n,(n-2)} = \{x\,|\,s_2(x)\geq 0,s_1(x)\geq 0\} = \{x\,|\,x^T\hat{J}x\geq 0,~e^Tx\geq 0\}$, one has the following characterization: for $A \in \GL_n(\Re)$,
\begin{equation}\label{eq:aut-nonnegorth-2}
A\Re_+^{n,(n-2)} = \Re_+^{n,(n-2)} \text{ or } A\Re_+^{n,(n-2)}=-\Re_+^{n,(n-2)}
~~\iff~~
 A^T \hat{J} A = \mu \hat{J} \text{ for some } \mu>0.
\end{equation}
In fact, by Sylvester's law of inertia, there exists $B \in \GL_n(\Re)$ such that $\hat{J} = B^TJB$, which yields the relation
$$
\SOC{2}{n}\cup-\SOC{2}{n} = \{x\,|\,x^TJx\geq 0\} = B\{x\,|\,x^T\hat{J}x\geq 0\} = B(\Re_+^{n,(n-2)}\cup -\Re_+^{n,(n-2)}).
$$
The characterization \eqref{eq:aut-nonnegorth-2} follows by this relation combined with \eqref{eq:aut-SOC}.
}

The next result determines the structure of $\Aut(\Re_+^{n,(k)})$ for $k=1,\ldots,n-3$.

\begin{theorem}\label{th:aut-orth}
For $n \geq 4$ and $k=1,\ldots,n-3$, we have
$$
\Aut(\Re_+^{n,(k)}) = \{\alpha P \mid \alpha >0,~ P \text{ is a permutation matrix}\}.
$$
\end{theorem}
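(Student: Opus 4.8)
The plan is to apply Theorem~\ref{theo:hyper_aut_2} to the ROG hyperbolicity cone $\Re_+^n = \Lambda_+(p,e)$ with $p(x) = x_1\cdots x_n$ and $e=(1,\ldots,1)^T$. First I would verify the hypotheses: $\Re_+^n$ is regular, and it is ROG with respect to $p$ because its extreme rays are generated by the coordinate vectors $e_i$, each of which has a single nonzero eigenvalue (rank one) under $p$ — this is the canonical example discussed right after the definition of ROG cones. We have $\deg p = n \geq 4$ and $\dim \Re_+^n = n \geq 3$, so for $1 \leq k \leq n-3$ Theorem~\ref{theo:hyper_aut_2} yields
\[
\Aut(\Re_+^{n,(k)}) = \{A \in \Aut(\Re_+^n) \mid A(\Re_+ e) = \Re_+ e\}.
\]

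Next I would plug in the known description $\Aut(\Re_+^n) = \{\Diag(c_1,\ldots,c_n)P \mid c_i > 0,\ P \text{ a permutation matrix}\}$ and work out exactly which of these maps send the ray $\Re_+ e$ to itself. Writing $A = \Diag(c_1,\ldots,c_n)P$, we have $Ae = \Diag(c_1,\ldots,c_n)(Pe) = \Diag(c_1,\ldots,c_n)e = (c_1,\ldots,c_n)^T$, using $Pe = e$ since $P$ permutes the entries of the all-ones vector. The condition $Ae \in \Re_+ e$ then forces $(c_1,\ldots,c_n)^T = \alpha e$ for some $\alpha > 0$, i.e. $c_1 = \cdots = c_n = \alpha$. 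Hence $A = \alpha P$, giving the claimed set $\{\alpha P \mid \alpha > 0,\ P \text{ a permutation matrix}\}$; conversely every such $\alpha P$ clearly lies in $\Aut(\Re_+^n)$ and fixes $\Re_+ e$, so the two sets coincide.

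There is essentially no hard step here — this is a direct corollary of the main theorem combined with the elementary, well-known computation of $\Aut(\Re_+^n)$. The only point requiring a word of care is confirming that the general formula $D_e^k p(x) = k!\, s_{n-k}(x)$ and the description \eqref{eq:nonneg-orth-deriv} of $\Re_+^{n,(k)}$ match the abstract object $\Lambda_+^{(k)}(p,e)$ to which Theorem~\ref{theo:hyper_aut_2} applies, but this identification was already recorded in the paragraph preceding the theorem. I would also note in passing that the range $1 \leq k \leq n-3$ is exactly what the theorem permits, and that the excluded cases $k = n-2$ (a Lorentz cone, handled via \eqref{eq:aut-nonnegorth-2}) and $k = n-1$ (a half-space) have much larger automorphism groups, which is why they are treated separately rather than by this theorem.
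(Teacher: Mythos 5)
Your proposal is correct and follows exactly the paper's argument: invoke Theorem~\ref{theo:hyper_aut_2} for the ROG realization $\Re_+^n = \Lambda_+(x_1\cdots x_n, e)$, then intersect the known description of $\Aut(\Re_+^n)$ with the condition $A(\Re_+e)=\Re_+e$, which forces $c_1=\cdots=c_n$ since $Pe=e$. No gaps; the remarks on the excluded cases $k=n-2,n-1$ match the paper's surrounding discussion.
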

\begin{proof}

The extreme rays of $\Re_+^n$ consist of the rays generated by the coordinate basis $e_1,\ldots,e_n$ and the hyperbolicity cone $\Re_+^n$ is ROG with respect to $p(x)=x_1x_2\cdots x_n$ and $e=(1,\ldots,1)^T$. By Theorem~\ref{theo:hyper_aut_2}, for $k = 1,\ldots,n-3$ we have
\[
\Aut(\Re_+^{n,(k)}) = \{A \in \Aut(\Re_+^n) \mid A(\Re_+ e) = \Re_+ e \}.
\]
If $A = \alpha P$ for $\alpha > 0$ and $P$ a permutation matrix, since $A(e) = \alpha Pe =\alpha e$, we have $A \in \Aut(\Re_+^{n,(k)})$.

Conversely, let  $A \in \Aut(\Re_+^{n,(k)})$.
Then, $A \in \Aut(\Re_+^n)$ and so
$$A = \Diag(c_1,\ldots,c_n) P $$
for some $c_1,\ldots,c_n > 0$ and a permutation matrix $P$. 
Since $A(\Re_+e) = \Re_+ e$, there exists $\alpha > 0$ such that $Ae = \alpha e$ and thus
$$
(\alpha,\ldots,\alpha) = \alpha e = Ae = \Diag(c_1,\ldots,c_n) P e = \Diag(c_1,\ldots,c_n) e =  (c_{1},\ldots,c_{n}).
$$
This implies that $c_1 = \cdots = c_n = \alpha$, i.e, $A = \alpha P$.
\end{proof}
Next, we analyze the automorphisms of the derivative relaxations of the cone of positive semidefinite matrices $\PSDcone{n}$, which is a ROG hyperbolicity cone realized as
\[
\S_+^n = \Lambda_+(P,I_n),\quad P(X) = \det X,
\]
where $I_n$ is the $n\times n$ identity matrix. 

First, we need a discussion on linear operators of $\S^n$.
Here we remark that if 
$L: \S^n \to \S^n$ is a linear operator, $L$ might be completely oblivious to the underlying matrix structure. In particular, even if $L$ is a bijection, it is not necessarily the case that $L$ maps nonsingular matrices to nonsingular matrices\footnote{Consider for example, the bijective linear map  $L$ that takes $\begin{psmallmatrix}
	a & b \\ b & c
	\end{psmallmatrix}$ to $\begin{psmallmatrix}
	b & a \\ a & c
	\end{psmallmatrix}$. The rank $2$ matrix $\begin{psmallmatrix}
	0 & 1 \\ 1 & 0
	\end{psmallmatrix}$ gets mapped to the rank $1$ matrix 
	$\begin{psmallmatrix}
	1 & 0 \\ 0 & 0
	\end{psmallmatrix}$.}.

In contrast, a family of linear operators that \emph{do} preserve the rank is given by the operators $L_M:\S^n\to \S^n$ defined by 
\begin{equation}\label{eq:lm}
L_M(X) \coloneqq MXM^\T, \qquad \forall X \in \S^n
\end{equation}
where $M$ is a fixed nonsingular matrix $M \in \GL_n(\Re)$ and $M^\T$ is its adjoint. 
In what follows we say that 
$L$ is \emph{rank $1$ preserver} if $L(X)$ has rank $1$ whenever $X \in \S^n$ has rank $1$.
Then, a remarkable (but perhaps not widely known) result tells us that the operators of the form $\pm L_M$ are the only linear bijections that are rank $1$ preservers.
\begin{theorem}[{Lim \cite[Theorem~10]{Li79} and Waterhouse \cite[Theorem~11]{Wa89}}]\label{theo:lw}
	Let $L:\S^n \to \S^n$ be a bijective linear operator that is a rank $1$ preserver. Then, there exists a nonsingular matrix $M\in \GL_n(\Re)$ and nonzero scalar $\alpha$ 
	such that $L = \alpha L_M$.
\end{theorem}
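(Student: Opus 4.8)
The plan is to reconstruct $L$ from its induced action on rank-one matrices --- which is controlled by the quadratic Veronese map $v\mapsto vv^{\T}$ --- and then to invoke the fundamental theorem of projective geometry. A first remark is that $L^{-1}$ is automatically a rank $1$ preserver: the set of symmetric matrices of rank at most $1$ is the affine cone over the Veronese variety, hence an irreducible algebraic variety of dimension $n$, and a linear bijection that maps this set into itself must map it onto itself, since an irreducible variety contains no proper closed subvariety of full dimension. Therefore $L(vv^{\T})$ has rank exactly $1$ for every nonzero $v$, so we may write $L(vv^{\T})=\lambda([v])\,u(v)u(v)^{\T}$, which defines a bijection $g\colon\mathbb{P}^{n-1}\to\mathbb{P}^{n-1}$, $[v]\mapsto[u(v)]$, together with a nowhere-vanishing scalar function $\lambda$ on $\mathbb{P}^{n-1}$ (after fixing representatives).

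The core of the argument is to prove that $g$ is a collineation, i.e.\ that it carries projective lines to projective lines. A line $\ell\subseteq\mathbb{P}^{n-1}$ is the set of $[u]$ with $u$ in a fixed $2$-dimensional subspace $V=\spanVec\{v,w\}$, and the $3$-dimensional space $\S(V)$ of symmetric matrices supported on $V$ has the key property that its rank-$\le 1$ elements are precisely the $\Re\cdot uu^{\T}$ with $u\in V$; so $\ell$ is encoded by $\S(V)$. I would then show $L(\S(V))=\S(V')$ for some $2$-dimensional $V'$: the matrices $L(vv^{\T})$, $L(ww^{\T})$, $L\big((v+w)(v+w)^{\T}\big)$ are linearly independent rank-one matrices, and writing $L(vv^{\T})=\mu_1 z_1 z_1^{\T}$, $L(ww^{\T})=\mu_2 z_2 z_2^{\T}$ with $z_1,z_2$ linearly independent, the matrix $R:=L(vv^{\T})+L(ww^{\T})$ has rank $2$ with range $V':=\spanVec\{z_1,z_2\}$; with $N:=L(vw^{\T}+wv^{\T})$, the identities $(v\pm w)(v\pm w)^{\T}=vv^{\T}+ww^{\T}\pm(vw^{\T}+wv^{\T})$ give $R+N=L((v+w)(v+w)^{\T})$ and $R-N=L((v-w)(v-w)^{\T})$, both of rank $1$. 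Writing $R+N=\pm a a^{\T}$ and $R-N=\pm b b^{\T}$, the sum $2R$ lies in $\spanVec\{aa^{\T},bb^{\T}\}$, so $a,b$ must span $V'$ (otherwise $2R$ would have rank $\le 1$); hence $a,b\in V'$ and $N=\tfrac12\big((R+N)-(R-N)\big)\in\S(V')$, so $L(\S(V))=\S(V')$ and $g(\ell)$ is the line attached to $V'$. This ``lines-to-lines'' step, carried out through rank bookkeeping inside a fixed $3$-dimensional subspace, is the part I expect to be the \emph{main obstacle}.

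Granting that $g$ is a collineation of $\mathbb{P}^{n-1}$ with $n\ge 3$, the fundamental theorem of projective geometry yields a semilinear bijection inducing $g$; since $\Re$ has no nontrivial field automorphism, this bijection is linear, so $g([v])=[Mv]$ for some $M\in\GL_n(\Re)$, and thus $L(vv^{\T})=\lambda([v])\,(Mv)(Mv)^{\T}$. The small cases $n\le 2$ are treated directly: $n=1$ is trivial, and for $n=2$ one checks that a bijective rank-one preserver on $\S^2$ is a linear similarity of the nondegenerate quadratic form $X\mapsto\det X$, whose similarity group is $\{\pm L_M:M\in\GL_2(\Re)\}$.

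For the last step I would normalize by replacing $L$ with $L_{M}^{-1}\circ L=L_{M^{-1}}\circ L$, which is again a bijective rank-one preserver and now satisfies $L(vv^{\T})=\lambda([v])\,vv^{\T}$ for all $v$; it then suffices to prove that the normalized map equals $\alpha\,\mathrm{id}$ for a constant $\alpha\neq 0$, since undoing the normalization gives $L=\alpha L_M$ for the original $L$. Given linearly independent $v,w$, the matrices $vv^{\T},\ ww^{\T},\ vw^{\T}+wv^{\T}$ are linearly independent, so expanding $L\big((v+w)(v+w)^{\T}\big)$ and $L\big((v-w)(v-w)^{\T}\big)$ in this basis and comparing coefficients forces $\lambda([v+w])=\lambda([v-w])=\lambda([v])=\lambda([w])$. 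Hence $\lambda$ is constant on $\mathbb{P}^{n-1}$, say $\lambda\equiv\alpha\neq 0$, and since the rank-one symmetric matrices span $\S^n$ the normalized map is $\alpha\,\mathrm{id}$, which completes the proof.
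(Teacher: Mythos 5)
The paper does not actually prove this result: it is imported verbatim from Lim and Waterhouse, so there is no in-paper argument to compare against. Your proof is, as far as I can check, correct and complete as a self-contained argument. The route --- read off the induced map $g$ of $\mathbb{P}^{n-1}$ from the action of $L$ on the Veronese cone, show that $g$ carries projective lines onto projective lines by proving $L(\S(V))=\S(V')$ for every $2$-plane $V$ via the rank bookkeeping with $R\pm N$, invoke the fundamental theorem of projective geometry together with the rigidity of $\Re$, and then kill the residual scalar function $\lambda$ by the polarization identity on $(v\pm w)(v\pm w)^T$ --- is essentially the classical ``geometry of matrices'' strategy going back to Hua and is close in spirit to Lim's adjacency arguments, whereas Waterhouse instead classifies large subspaces of quadratic forms of bounded rank. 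Each step checks out: $z_1,z_2$ are independent because $g$ is injective, $R$ has rank $2$ with range $V'$, $a,b$ must be independent since otherwise $2R=(R+N)+(R-N)$ would have rank at most one, and the coefficient comparison in the basis $\{vv^T,ww^T,vw^T+wv^T\}$ does force $\lambda([v])=\lambda([v+w])=\lambda([w])$.

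Two places are compressed and deserve a line each in a final write-up, though neither is a gap. First, the surjectivity of $L$ on the rank-$\le 1$ locus (needed so that $g$ is a bijection and the bijective form of the fundamental theorem applies) rests on real algebraic geometry: you should note that the rank-$\le 1$ cone is the Zariski closure of the image of $(s,v)\mapsto s\,vv^T$, hence irreducible, and that proper Zariski-closed subsets of an irreducible real variety have strictly smaller dimension. Alternatively, since your lines-to-lines computation uses only injectivity of $L$, you could bypass this entirely with the injective (Faure--Fr\"olicher) version of the fundamental theorem. Second, the $n=2$ case is only asserted; the identification of the bijective rank-one preservers of $\S^2$ with the similarity group of $\det$, and of that group with $\{\pm L_M \mid M\in\GL_2(\Re)\}$, is true but needs a short argument (an irreducibility argument gives $\det\circ L=c\det$, and then a dimension-plus-components count shows $\{\pm L_M\}$ exhausts the similarity group). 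For the application in the paper only $n\ge 4$ is used, so the base cases are peripheral anyway.
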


We now have all the tools necessary to prove the following result.

\begin{theorem}\label{theo:aut_snk}
For $n\geq 4$ and $k$ with $1\leq k\leq n-3$, we have
$$
\Aut(\S_+^{n,(k)}) = \{\alpha L_Q \mid \alpha > 0,~Q \text{ is a $n\times n$ orthogonal matrix}\}.
$$
\end{theorem}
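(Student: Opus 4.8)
The strategy is to apply Theorem~\ref{theo:hyper_aut_2} to $\S_+^n = \Lambda_+(\det, I_n)$, which is a regular ROG hyperbolicity cone of degree $d = n \geq 4$, and then identify the automorphisms of $\S_+^n$ that fix the ray $\Re_+ I_n$. By Theorem~\ref{theo:hyper_aut_2}, for $1 \leq k \leq n-3$ we have
\[
\Aut(\S_+^{n,(k)}) = \{ A \in \Aut(\S_+^n) \mid A(\Re_+ I_n) = \Re_+ I_n \}.
\]
So the proof reduces to two tasks: (1) determine $\Aut(\S_+^n)$, and (2) single out those automorphisms fixing $\Re_+ I_n$. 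For (1), I would argue that any $A \in \Aut(\S_+^n)$ is a rank-$1$ preserver: the extreme rays of $\S_+^n$ are exactly the rays generated by rank-$1$ matrices, and since $A$ permutes extreme rays (being a cone automorphism), $A$ maps rank-$1$ matrices to positive multiples of rank-$1$ matrices, hence is a rank-$1$ preserver in the sense of Theorem~\ref{theo:lw}. Applying the Lim--Waterhouse theorem (Theorem~\ref{theo:lw}), $A = \alpha L_M$ for some nonsingular $M$ and nonzero scalar $\alpha$; and since $A$ maps the full-dimensional cone $\S_+^n$ to itself (not to $-\S_+^n$), we may absorb signs and take $\alpha > 0$, $A = \alpha L_M$ with $M \in \GL_n(\Re)$. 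Conversely every such $\alpha L_M$ preserves $\S_+^n$. This gives $\Aut(\S_+^n) = \{\alpha L_M \mid \alpha > 0,\ M \in \GL_n(\Re)\}$.

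For task (2), I would compute: $\alpha L_M$ fixes $\Re_+ I_n$ iff $\alpha M I_n M^\T = \beta I_n$ for some $\beta > 0$, i.e. $M M^\T = (\beta/\alpha) I_n$. Writing $c = (\beta/\alpha)^{1/2} > 0$ and $Q = c^{-1} M$, this says exactly $Q$ is orthogonal, so $M = cQ$ and $\alpha L_M = \alpha c^2 L_Q$. Renaming the positive scalar, the automorphisms fixing $\Re_+ I_n$ are precisely $\{\alpha L_Q \mid \alpha > 0,\ Q \text{ orthogonal}\}$. Combined with Theorem~\ref{theo:hyper_aut_2}, this yields the claimed formula for $\Aut(\S_+^{n,(k)})$. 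One should also note that each $L_Q$ with $Q$ orthogonal genuinely lies in $\Aut(\S_+^{n,(k)})$ — this is guaranteed by the ``$\supseteq$'' direction of Theorem~\ref{theo:hyper_aut_2} since $L_Q(I_n) = QQ^\T = I_n$, so no separate verification is needed.

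The main obstacle is the rank-$1$ preserver argument in task (1): one must be careful that a cone automorphism of $\S_+^n$ only a priori permutes extreme rays, so it sends a rank-$1$ matrix $X$ to some positive multiple of a rank-$1$ matrix, which is still rank $1$ — this is what licenses Theorem~\ref{theo:lw}. The remaining steps are routine linear algebra. A minor point worth stating cleanly is why $\alpha$ can be taken positive rather than merely nonzero: since $\S_+^n$ is pointed and full-dimensional and $A(\S_+^n) = \S_+^n$ (not $-\S_+^n$), the scalar in $A = \alpha L_M = L_{\sqrt{\alpha}M}$ (when $\alpha>0$) or the sign can be moved into $M$, so without loss of generality $A = \alpha L_M$ with $\alpha>0$; equivalently one just notes $L_M = L_{-M}$ and $-L_M$ maps $\S_+^n$ to $-\S_+^n$, ruling out negative scalars.
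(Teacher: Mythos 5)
Your proposal is correct and follows essentially the same route as the paper: reduce via Theorem~\ref{theo:hyper_aut_2} to the automorphisms of $\PSDcone{n}$ fixing $\Re_+ I_n$, invoke the rank-$1$ preserver characterization of Lim--Waterhouse (Theorem~\ref{theo:lw}) to get the form $\alpha L_M$, and then conclude $MM^\T = \kappa I_n$ forces $M$ to be a positive multiple of an orthogonal matrix. The paper's proof is the same argument, including the observation that pointedness of $\PSDcone{n}$ rules out a negative scalar.
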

\begin{proof}
The PSD cone $\PSDcone{n}$ is a ROG hyperbolicity cone generated by the determinant polynomial along the direction $I_n$, where $I_n$ is the $n\times n$ identity matrix. By Theorem~\ref{theo:hyper_aut_2}, we have
\[
\Aut(\S_+^{n,(k)}) = \{L \in \Aut(\PSDcone{n}) \mid L(\Re_+I_n)=\Re_+I_n \}.
\]
A map of the form $\alpha L_Q$ with $\alpha > 0$, $Q$ orthogonal is an automorphism of $\PSDcone{n}$ that satisfies the condition $\alpha L_Q(\Re_+I_n) = \Re_+I_n$, so this shows the ``$\supseteq$'' inclusion.

Next, we prove the converse. Let $L$ be an automorphism of $\Aut(\S_+^{n,(k)})$. In particular, it must be an automorphism of $\PSDcone{n}$.
The extreme rays of $\PSDcone{n}$ are generated by rank~$1$ matrices, so an automorphism of $\PSDcone{n}$ must 
be a rank~$1$ preserver. 
Theorem~\ref{theo:lw} then implies that  $L = \alpha L_M$ for some nonzero scalar $\alpha$ and matrix $M \in \GL_n(\Re)$ as in \eqref{eq:lm}.
Since $\PSDcone{n}$ is pointed, $\alpha$ must be positive and, rescaling $M$ if necessary, we may assume that $\alpha = 1$.
However, the condition
$L(\Re_+I_n)=\Re_+I_n$ implies that there exists $\kappa > 0$ such that 
\[
MM^\T = \kappa I_n,
\]
that is, $Q \coloneqq M/\sqrt{\kappa}$ is an orthogonal matrix and $\kappa L_Q = L_M = L$. 
%
\end{proof}
The derivative relaxations of $\S_+^{n,(k)}$ and $\Re_+^{n,(k)}$ are intimately connected as follows (cf. \eqref{eq:nonneg-orth-deriv} and \cite[Eq.\,(3)]{Re06}).
For a symmetric matrix $X \in \S^n$, let $\lambda(X)$ denote the eigenvalue map $\lambda(X)=(\lambda_1(X),\ldots,\lambda_n(X))$. Then,
\begin{equation}\label{eq:snk_rnk}
\S_+^{n,(k)} = \lambda^{-1}(\Re_+^{n,(k)}) = \{X \in \S^n \mid \lambda(X) \in \Re_+^{n,(k)}\},\quad k \geq 0.
\end{equation}
That is, $\S_+^{n,(k)}$ is the \emph{spectral cone} generated by  $\Re_+^{n,(k)}$. 
In view of Theorems~\ref{th:aut-orth} and \ref{theo:aut_snk}, a natural question is how to relate the automorphism groups of a spectral cone and its underlying permutation invariant subset.
We conclude this subsection with a detour on this topic.

\paragraph{Automorphisms of spectral cones}
We say that a set $C \subseteq \Re^n$ is \emph{permutation invariant} if for every permutation matrix $P$ we have $PC = C$.
The spectral set associated to $C$ is given by 
\[
\lambda^{-1}(C)  \coloneqq \{X \in \S^n \mid \lambda(X) \in C \}.
\]
Spectral sets and functions over symmetric matrices and, more generally, over Euclidean Jordan algebras have been studied in quite detail in several works, e.g., see \cite{Le96,B07,SS08,JG16,JG17} and many others. Still, to the best of our knowledge, the following result 
connecting $\Aut(\stdCone)$ and $\Aut(\lambda^{-1}(\stdCone))$
 seems to be novel.

\begin{theorem}
	Let $\stdCone$ be a permutation invariant closed convex cone and  let \break$L \in \Aut(\lambda^{-1}(\stdCone))$. Then, the following statements hold.
	\begin{enumerate}[$(i)$]\label{thm:spec_aut}
		\item\label{thm:spec_aut:1} 
		Suppose that $L = \alpha L_M$ for some $M \in \GL_n(\Re)$ and $\alpha \in \{-1,1\}$. If $D$ is a diagonal matrix corresponding to the singular values of $M$ in any order, then $\alpha D^2 \in \Aut(\stdCone)$.
		
		\item\label{thm:spec_aut:2} 
		If $L$ is a rank $1$ preserver, then 
		$L = \alpha L_M$ for some $M \in \GL_n(\Re)$ and $\alpha \in \{-1,1\}$.
		If $\stdCone$ is nonzero and pointed then $\alpha  = 1$.

	\end{enumerate}
\end{theorem}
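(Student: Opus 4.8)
The plan is to treat the two items in turn, using for item~(ii) the result of item~(i) together with the generalized Perron--Frobenius theorem. Two elementary facts will be used repeatedly. First, since $\stdCone$ is a permutation-invariant closed convex cone, $\lambda^{-1}(\stdCone)$ is a closed convex cone (a classical fact about spectral sets), so $\Aut(\lambda^{-1}(\stdCone))$ is meaningful. Second, $\lambda^{-1}(\stdCone)$ carries a large orthogonal symmetry: for every orthogonal $Q$ one has $\lambda(QXQ^\T)=\lambda(X)$, hence $L_Q \in \Aut(\lambda^{-1}(\stdCone))$. The third and crucial observation is that $\stdCone$ is encoded in $\lambda^{-1}(\stdCone)$ via its diagonal matrices: for $x\in\Re^n$, $\Diag(x)\in\lambda^{-1}(\stdCone)$ if and only if $x\in\stdCone$, because $\lambda(\Diag(x))$ is a permutation of $x$ and $\stdCone$ is permutation invariant.

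For item~(i), I would first reduce to a diagonal $M$. Take a singular value decomposition $M=Q_1 D Q_2$ with $Q_1,Q_2$ orthogonal and $D=\Diag(\sigma_1,\ldots,\sigma_n)$, $\sigma_i>0$. A direct computation gives $L_{Q_1}^{-1}\circ L\circ L_{Q_2}^{-1}=\alpha L_D$, and since the left-hand side is a composition of automorphisms of $\lambda^{-1}(\stdCone)$, we conclude $\alpha L_D\in\Aut(\lambda^{-1}(\stdCone))$. Evaluating this automorphism on a diagonal matrix, $(\alpha L_D)(\Diag(x))=\alpha D\Diag(x)D=\Diag(\alpha D^2 x)$, where $D^2x:=(\sigma_1^2 x_1,\ldots,\sigma_n^2 x_n)$. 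Combining with the diagonal-encoding observation, for every $x\in\Re^n$,
\[
x\in\stdCone \iff \Diag(x)\in\lambda^{-1}(\stdCone) \iff \Diag(\alpha D^2x)\in\lambda^{-1}(\stdCone) \iff \alpha D^2x\in\stdCone,
\]
which says exactly that $\alpha D^2\in\Aut(\stdCone)$. Reordering the singular values replaces $\alpha D^2$ by $P(\alpha D^2)P^{-1}$ for a permutation matrix $P$, and since $P,P^{-1}\in\Aut(\stdCone)$ by permutation invariance, the conclusion holds for $D$ in any order.

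For item~(ii), the Lim--Waterhouse theorem (Theorem~\ref{theo:lw}) gives $L=\alpha L_M$ for a nonzero scalar $\alpha$ and $M\in\GL_n(\Re)$; absorbing $\sqrt{|\alpha|}$ into $M$ we may take $\alpha\in\{-1,1\}$. Now assume $\stdCone$ is nonzero and pointed, and suppose for contradiction that $\alpha=-1$. Item~(i) yields $-D^2\in\Aut(\stdCone)$ with $D$ a positive diagonal matrix, so $-D^2$ restricts to a linear automorphism of $\stdCone$ viewed as a regular cone inside $\spanVec\stdCone$. By the generalized Perron--Frobenius theorem (Theorem~\ref{theo:pf}), the spectral radius of this restriction must be one of its eigenvalues; but every eigenvalue of $-D^2$, hence of any restriction of it to an invariant subspace, is negative, whereas the spectral radius of an invertible operator is positive, a contradiction. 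Hence $\alpha=1$.

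I expect item~(i) to be essentially routine once the reduction is set up, and item~(ii) to follow quickly from it; the one genuine idea is the opening move of item~(i), namely using the orthogonal-conjugation symmetry of $\lambda^{-1}(\stdCone)$ to strip $M$ down to its singular-value part, after which item~(i) is a one-line computation on diagonal matrices. The only points that require care are the bookkeeping of permutation invariance (in the diagonal-encoding step and in the ``any order'' clause) and, in item~(ii), passing to $\spanVec\stdCone$ so that Theorem~\ref{theo:pf} applies to a genuinely regular cone.
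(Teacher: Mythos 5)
Your proof is correct. Item~(i) follows essentially the same route as the paper: take an SVD of $M$, use the orthogonal-conjugation invariance of $\lambda^{-1}(\stdCone)$ to reduce to $\alpha L_D\in\Aut(\lambda^{-1}(\stdCone))$, and read off the action on diagonal matrices; your ``iff'' chain (exploiting that an automorphism is a bijection of the cone onto itself) even streamlines the paper's two-sided argument, which establishes $\alpha D^2\stdCone\subseteq\stdCone$ and then repeats the reasoning for $L^{-1}$ to get the reverse inclusion. The genuine divergence is in the sign determination in item~(ii). The paper invokes a structural fact about permutation-invariant pointed cones (that exactly one of $\pm(1,\ldots,1)^T$ lies in $\stdCone$), applies $\alpha D^2$ to that vector, and averages over the symmetric group to produce both $f$ and a negative multiple of $f$ in $\stdCone$, contradicting pointedness. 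You instead restrict $-D^2$ to $\spanVec\stdCone$ (correctly noting this subspace is invariant and that $\stdCone$ is regular there) and apply the generalized Perron--Frobenius theorem (Theorem~\ref{theo:pf}): the spectral radius would have to be an eigenvalue with an eigenvector in $\stdCone$, yet every eigenvalue of the restriction of the negative definite matrix $-D^2$ is negative. Your argument is arguably cleaner and more general --- it shows that no linear map with only negative eigenvalues can preserve a nonzero pointed closed convex cone, with no use of permutation invariance at that stage --- while the paper's argument stays entirely within the combinatorial structure of permutation-invariant cones and avoids the (small) technicalities of restricting to an invariant subspace. Both are complete.
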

\begin{proof}
	We start with item~\ref{thm:spec_aut:1}.
	We write the singular value decomposition of $M$ as $M = UDV$, so that $D$ is a diagonal matrix containing the singular values of $M$ and $U,V$ are orthogonal matrices.
	Since $\lambda^{-1}(\stdCone)$ is a spectral cone and $U,V$ are orthogonal, $L_U$ and $L_V$ both belong to $\Aut(\lambda^{-1}(\stdCone))$.
	We have
	\begin{align*}
	U\lambda^{-1}(\stdCone) U^\T = \lambda^{-1}(\stdCone) = \alpha UDV \lambda^{-1}(\stdCone) V^\T DU^\T = 
	\alpha UD \lambda^{-1}(\stdCone)  DU^\T.
	\end{align*}
	Therefore, $\lambda^{-1}(\stdCone) = \alpha D\lambda^{-1}(\stdCone) D$.
	That is, $\alpha L_D \in \Aut(\lambda^{-1}(\stdCone))$.
	
	Let $x \in \stdCone$, then $\text{Diag}(x) \in \lambda^{-1}(\stdCone)$. Therefore \[ \alpha D\text{Diag}(x)D = \alpha D^2 \text{Diag}(x) =   \text{Diag}(\alpha D^2x) \in \lambda^{-1}(\stdCone).\]
	This implies that $\alpha D^2 x \in \stdCone$, that is, $\alpha D^2 \stdCone \subseteq \stdCone$. Now, recall that $\Aut(\lambda^{-1}(\stdCone))$ is a group so  $L^{-1} = \alpha L_{M^{-1}}$ is an automorphism of $\lambda^{-1}(\stdCone)$ as well.
	Observing that all we have done so far also applies to $L^{-1}$, we conclude 
	that $\alpha D^{-2} \stdCone \subseteq \stdCone$ and that 
	$\alpha D^2 \in \Aut(\stdCone)$. This concludes the proof of item~\ref{thm:spec_aut:1}.

	Since $\alpha L_M = \alpha/\abs{\alpha} L _{M\sqrt{\abs{\alpha}} }$,
	the first half of item~\ref{thm:spec_aut:2} is a direct consequence of Theorem~\ref{theo:lw}, so that $\alpha$ and $M$ can be normalized in a way that $\alpha \in \{-1,1\}$.
	Next,  suppose that $\stdCone$ is nonzero and pointed.
	Then, \cite[Lemma~7.2]{JG16} tells us that either $e = (1,1,\ldots,1)^T$ or $-e$ belongs to $\stdCone$, but not both.
	So let $f \in \stdCone$ be $e$ in the former case and $-e$ in the latter.
	 Since $\alpha D^2 \in \Aut(\stdCone)$ (by item~\ref{thm:spec_aut:1}), we have  
that	 $\alpha D^2f$ belongs to $\stdCone$ as well. 
Let $\text{Sym}(n)$ denote the group of $n\times n$ permutation matrices. Because $\stdCone$ is convex
	we have that
	\[
	z \coloneqq \frac{1}{|\text{Sym}(n)|}\sum _{P \in \text{Sym}(n)} P(\alpha D^2 f)
	\]
	belongs to $\stdCone$ as well.
	Since $Pz = z$ holds for every $P \in \text{Sym}(n)$,
	all components of $z$ are the same and equal to a positive multiple of the sum of components of $\alpha D^2 f$.
	For the sake of obtaining a contradiction, suppose that $\alpha = -1$. Then, the  components of $\alpha D^2 f$ have the \emph{opposite sign} of the components of $f$  and $z$ is of the form $\beta f$, where $\beta < 0$. Since $\stdCone$ is a cone, this implies that both $f$ and $-f$ belong to $\stdCone$ which contradicts the fact that $\stdCone$ is pointed. 
	We conclude that  $\alpha = 1$. 
\end{proof}
\begin{remark}[Automorphisms of $\S_+^{n,(k)}$ and $\Re_+^{n,(k)}$ revisited]
Theorem~\ref{thm:spec_aut} and Theorem~\ref{theo:hyper_aut} are enough to prove Theorem~\ref{theo:aut_snk} without making use of Theorem~\ref{theo:hyper_aut_2}.	
A sketch of this is as follows. For $n \geq 4$ and $k$ such that $1 \leq k \leq n-3$, Theorem~\ref{theo:hyper_aut} implies that an automorphism $A$ of $\S_+^{n,(k)}$ is an automorphism of $\PSDcone{n}$. 
Then, since the extreme rays of $\PSDcone{n}$ correspond to rank $1$ matrices, $A$ must be a rank~$1$ preserver, so, by Theorem~\ref{theo:lw}, it is of the format $L_M$ for some $M \in \GL_n(\Re)$. 
From Theorem~\ref{thm:spec_aut} and \eqref{eq:snk_rnk}, the diagonal matrix $D$ containing the singular values of $M$ is such that $D^2 \in \Aut(\Re_+^{n,(k)})$. 
However, by Theorem~\ref{th:aut-orth},  the only diagonal matrix in $\Aut(\Re_+^{n,(k)})$ are the multiples of the identity matrix. So, the singular values of $M$ are all equal, which implies that $M$ is of the format $\alpha L_Q$ for $\alpha > 0$ and $Q$ some orthogonal matrix. Conversely, suppose that $L$ is a linear map of the form $\alpha L_Q$. In view of \eqref{eq:snk_rnk}, membership on $\S_+^{n,(k)}$ only depends on the eigenvalues of the matrix, so $\alpha L_Q$ indeed belongs to $\Aut(\S_+^{n,(k)})$.
\end{remark}

\begin{remark}[Related results]
	Let $C\subseteq \Re^n$ be a closed cone	(not necessarily convex) and let $\stdCone_C \coloneqq \{\sum uu^T \mid u \in C \} \subseteq \S^n$, where $\sum uu^T$ denotes a finite sum of matrices of the form $uu^T$. Then, $\stdCone_C$ is called the \emph{completely positive cone generated by $C$} and it is always convex. For example if $C = \Re^n$, then $\stdCone_C = \PSDcone{n}$ and if $C = \Re^n_+$, then $\stdCone_C$ is the usual cone of completely positive matrices. The completely positive cone construction is yet another way of obtaining a matrix cone from a vector cone. Gowda, Sznajder and Tao showed in \cite{GST13} how to relate the automorphism groups of $C$, $C \cup -C$ and $\stdCone_C$ under appropriate assumptions.
	We note that, similarly, the results of Lim and Waterhouse played an important role in the proof of \cite[Theorem~1]{GST13}.
\end{remark}

\subsection{Non-homogeneity and the Lyapunov rank}\label{sec:non_hom}
As mentioned in Section~\ref{sec:int}, at least from the optimization point of view, hyperbolicity cones can be seen as a natural step after  symmetric and homogeneous cones.
Here, we recall that a closed convex cone $\stdCone$ is said to be \emph{homogeneous} if $\Aut(\stdCone)$ acts transitively on the relative interior of $\stdCone$, i.e., for any $x,y \in \reInt\stdCone$, there exists $A \in \Aut(\stdCone)$
such that $Ax=y$. 
With that, we can prove the following.
\begin{corollary}\label{cor:nonhomog}
	Let $\Lambda_+(p,e)$ be a a regular hyperbolicity cone that is ROG with respect to $p$.
	Then $\Lambda_+^{(k)}(p,e)$ is not homogeneous for $1 \leq k \leq d- 3$.
\end{corollary}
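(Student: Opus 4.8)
The plan is to obtain the statement as a quick consequence of Theorem~\ref{theo:hyper_aut_2}: it identifies $\Aut(\Lambda_+^{(k)})$ (for $k$ in the stated range) with the subgroup of $\Aut(\Lambda_+)$ that fixes the ray $\Re_+e$, and a group of invertible linear maps all fixing one fixed ray cannot act transitively on the relative interior of a cone of dimension at least two. So the only real work is bookkeeping with the hypotheses.

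First I would dispose of the trivial case: if $d := \deg p \leq 3$ there is no $k$ with $1 \leq k \leq d-3$ and the claim is vacuous, so assume $d \geq 4$. I then need Theorem~\ref{theo:hyper_aut_2} to be applicable, i.e.\ $\dim\Lambda_+\geq 3$; this is automatic, since applying Proposition~\ref{prop:chain2} with $\stdFace=\Lambda_+$ (which has rank $d$) yields a chain of faces $\{0\}=\stdFace_0\subsetneq\stdFace_1\subsetneq\cdots\subsetneq\stdFace_d=\Lambda_+$, and because the dimension strictly increases along any chain of faces we get $\dim\Lambda_+\geq d\geq 4\geq 3$. In particular $\Lambda_+$, hence also $\Lambda_+^{(k)}\supseteq\Lambda_+$, is full-dimensional in $\Re^n$ with $n\geq 4$, and $e\in\reInt\Lambda_+\subseteq\reInt\Lambda_+^{(k)}$.

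Now fix $k$ with $1\leq k\leq d-3$ and suppose, for contradiction, that $\Lambda_+^{(k)}$ is homogeneous, so $\Aut(\Lambda_+^{(k)})$ acts transitively on $\reInt\Lambda_+^{(k)}$. By Theorem~\ref{theo:hyper_aut_2}, every $A\in\Aut(\Lambda_+^{(k)})$ satisfies $A(\Re_+e)=\Re_+e$, and since $A$ is invertible this forces $Ae=\alpha e$ for some $\alpha>0$. Hence the orbit of $e$ under $\Aut(\Lambda_+^{(k)})$ is contained in $\{\alpha e\mid\alpha>0\}\subseteq\Re e$. On the other hand $\reInt\Lambda_+^{(k)}$ is a nonempty open subset of $\Re^n$ with $n\geq 2$, so it is not contained in any line and in particular contains a point $y\notin\Re e$; no automorphism of $\Lambda_+^{(k)}$ can send $e$ to such a $y$, contradicting transitivity. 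Therefore $\Lambda_+^{(k)}$ is not homogeneous.

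I do not anticipate any genuine difficulty here: the argument is entirely driven by Theorem~\ref{theo:hyper_aut_2}, and the only mildly delicate point is verifying its dimension hypothesis, which the chain-of-faces estimate above settles.
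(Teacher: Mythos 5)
Your proof is correct and follows essentially the same route as the paper: Corollary~\ref{cor:nonhomog} is deduced directly from Theorem~\ref{theo:hyper_aut_2}, since automorphisms that all fix the ray $\Re_+e$ cannot move $e$ off the line $\Re e$ and hence cannot act transitively on the full-dimensional interior. Your extra bookkeeping (the vacuous case $d\leq 3$ and the chain-of-faces argument showing $\dim\Lambda_+\geq d\geq 4$, so the dimension hypothesis of Theorem~\ref{theo:hyper_aut_2} is automatic) is a careful touch the paper leaves implicit, but it does not change the underlying argument.
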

\begin{proof}
 Theorem~\ref{theo:hyper_aut_2}  implies that 
every automorphism of $\Lambda_+^{(k)}$ for $k=1,\ldots,d-3$ must fix the direction $e$, so there is no room for the automorphisms of $\Lambda_+^{(k)}(p,e)$ to act transitively.
\end{proof}

Corollary~\ref{cor:nonhomog} implies 
that $\Re_+^{n,(k)}$ and $\S_+^{n,(k)}$ are not homogeneous for $k=1,\ldots,n-3$.
And, in general, the informal conclusion is that derivative relaxations can be significantly poorer in automorphisms when compared with the original cone.

Recalling Section~\ref{sec:int}, the Lyapunov rank $\beta(\stdCone)$ of a cone $\stdCone$ is the dimension of the Lie algebra of the automorphism group of $\stdCone$ \cite{GT14_2,GT14,Or22}. The Lie algebra corresponds to the tangent space at the identity element, so the Lyapunov rank is simply the dimension of $\Aut(\stdCone)$ as a smooth manifold.
With this in mind, the next corollary gives a quantitative statement regarding how small are
the automorphism groups of  $\Re_+^{n,(k)}$ and $\S_+^{n,(k)}$, for $k=1,\ldots,n-3$.

\begin{corollary}[Lyapunov rank of derivative relaxations of $\Re^n_+$ and $\PSDcone{n}$]\label{col:lya}
For $n \geq 4$ and  $k=1,\ldots,n-3$, 
we have
\[
\beta(\Re_+^{n,(k)}) = 1, \qquad \beta(\S_+^{n,(k)}) = \frac{n^2-n+2}{2}. 
\]
\end{corollary}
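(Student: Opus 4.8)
The plan is to reduce everything to the definition of the Lyapunov rank as the dimension of $\Aut(\stdCone)$ viewed as a Lie group, combined with the explicit descriptions of $\Aut(\Re_+^{n,(k)})$ and $\Aut(\S_+^{n,(k)})$ supplied by Theorems~\ref{th:aut-orth} and \ref{theo:aut_snk}. Recall that $\beta(\stdCone)$ is the dimension of the Lie algebra of $\Aut(\stdCone)$, i.e.\ of its tangent space at the identity; since the automorphism group of a closed convex cone is a closed subgroup of the relevant general linear group, it is a Lie group by Cartan's closed subgroup theorem, and $\beta(\stdCone) = \dim \Aut(\stdCone)$. So it suffices to compute these two dimensions.

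For the orthant, Theorem~\ref{th:aut-orth} gives, for $1 \le k \le n-3$, that $\Aut(\Re_+^{n,(k)}) = \{\alpha P \mid \alpha > 0,\ P \text{ a permutation matrix}\}$. I would observe that the map $\Re_{++} \times \mathrm{Sym}(n) \to \Aut(\Re_+^{n,(k)})$ sending $(\alpha,P)$ to $\alpha P$ is a group isomorphism: it is a homomorphism because the scalar factor is central, it is surjective by the theorem, and $\alpha P = I_n$ forces $\alpha = 1$ and $P = I_n$ (as $P$ is a $0/1$ matrix). Since $\mathrm{Sym}(n)$ is finite and hence $0$-dimensional, $\Aut(\Re_+^{n,(k)})$ is a $1$-dimensional Lie group, so $\beta(\Re_+^{n,(k)}) = 1$.

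For the positive semidefinite case, Theorem~\ref{theo:aut_snk} gives $\Aut(\S_+^{n,(k)}) = \{\alpha L_Q \mid \alpha > 0,\ Q \in O(n)\}$ for $1 \le k \le n-3$. Here I would use the smooth surjective homomorphism $\varphi \colon \Re_{++} \times O(n) \to \Aut(\S_+^{n,(k)})$, $\varphi(\alpha,Q) = \alpha L_Q$, which is a homomorphism thanks to $L_{Q_1}L_{Q_2} = L_{Q_1 Q_2}$ and centrality of the scalars. Evaluating the identity $\alpha L_Q = \mathrm{Id}$ first at $I_n$ (which forces $\alpha = 1$) and then at the rank-one matrices $e_i e_i^{\T}$ and $(e_i + e_j)(e_i+e_j)^{\T}$ (which forces $Q = \pm I_n$), one finds $\ker \varphi = \{(1,I_n),(1,-I_n)\}$, a discrete subgroup. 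By the first isomorphism theorem for Lie groups, $\Aut(\S_+^{n,(k)}) \cong (\Re_{++} \times O(n)) / \ker\varphi$, so its dimension equals $\dim \Re_{++} + \dim O(n) = 1 + \binom{n}{2} = \frac{n^2 - n + 2}{2}$, which is the asserted value of $\beta(\S_+^{n,(k)})$.

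I do not expect any genuine obstacle; the only point needing care is the bookkeeping in the semidefinite case, namely verifying that the natural parametrization $(\alpha,Q)\mapsto \alpha L_Q$ has finite (order-two) kernel so that the dimension $\binom{n}{2}$ of $O(n)$ passes unchanged to $\Aut(\S_+^{n,(k)})$ while the scalar factor adds exactly one further dimension. Everything else is an immediate consequence of the structure theorems already established.
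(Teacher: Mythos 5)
Your proof is correct and follows essentially the same route as the paper: the PSD case is handled identically (the surjective homomorphism $(\alpha,Q)\mapsto \alpha L_Q$ from $\Re_{++}\times O(n)$, computation of the discrete kernel $\{(1,I_n),(1,-I_n)\}$, and the quotient dimension count $1+\binom{n}{2}$), and the orthant case differs only cosmetically, since your isomorphism with $\Re_{++}\times\mathrm{Sym}(n)$ and the paper's tangent-curve argument both amount to observing that the group is a finite union of one-dimensional rays.
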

\begin{proof}
From Theorem~\ref{th:aut-orth}, the automorphisms of
$\Re_+^{n,(k)}$ are of the form $\gamma P$, where $\gamma > 0$ and $P$ is a permutation matrix. By continuity, a differentiable curve $\alpha:(-\epsilon, \epsilon) \to \Aut(\Re_+^{n,(k)})$ passing through the identity element at $\alpha(0)$ must be confined to the one dimensional ray $\{\gamma I \mid \gamma > 0 \}$ for sufficiently small $\epsilon$, where $I$ is the $n\times n$ identity matrix. This implies that the tangent space at the identity has dimension $1$, so $\beta(\Re_+^{n,(k)}) = 1$.
 
Next, we turn our attention to $\S_+^{n,(k)}$. Consider the Lie group $\mathcal{G} \coloneqq \Re_{++} \times O(n)$, which is the direct product of $\Re_{++}$, the multiplicative group of positive reals and $O(n)$, the group of $n\times n$ real orthogonal matrices.
Then, we define the following Lie group homomorphism $\psi: \mathcal{G} \mapsto \Aut(\S_+^{n,(k)})$ given by 
\[
\psi(\gamma, Q) \coloneqq \gamma L_Q,
\]
where $\gamma \in \Re_{++}$, $Q \in O(n)$ and $L_Q$ is as in \eqref{eq:lm}.
Denote the kernel of $\psi$ by $\ker \psi$.
 By Theorem~\ref{theo:aut_snk}, $\psi$ is surjective. 
Therefore, the quotient group $\mathcal{G}/\ker \psi$ is diffeomorphic to $\Aut(\S_+^{n,(k)})$, e.g., see \cite[Theorem~21.27]{Lee2012}. However, the dimension of $\mathcal{G}/\ker \psi$ is given by 
\[
\dim(\mathcal{G}) - \dim(\ker\psi),
\]
e.g., see \cite[Theorem~21.17]{Lee2012}.
Furthermore, $\ker \psi = \{(1,I),(1,-I)\}$ holds\footnote{This can be seen by noticing that if $(\lambda, Q)  \in \ker \psi$, then $\lambda QIQ^\T = I$ holds so that $\lambda = 1$. 
Then, letting $e_i \in \Re^n$ denote the $i$-th coordinate vector we have $Qe_ie_i^TQ^\T = e_i e_i^T$, which implies that the $i$-th column of $Q$ must be $\pm e_i$. Finally, if the nonzero elements in two columns $i,j$ of $Q$ have different sign, then $Q(e_ie_j^T + e_je_i^T)Q^\T = -(e_ie_j^T+e_je_i^T) \neq e_ie_j^T + e_je_i^T$. We conclude that $Q = \pm I$.  }, which is a discrete subgroup of $\mathcal{G}$, so its manifold dimension is $0$.
We conclude that  the dimension of $\Aut(\S_+^{n,(k)})$ coincides with the dimension of  $\mathcal{G}$. The group $O(n)$ has dimension $n(n-1)/2$ (e.g., \cite[Example~7.28]{Lee2012}) and $\Re_{++}$ has dimension $1$, which leads to the formula 
$\beta(\S_+^{n,(k)}) = \frac{n^2-n+2}{2}$.
\end{proof}
The Lyapunov rank of $\Re^n_+$ and $\PSDcone{n}$ are $n$ and $n^2$, respectively, see \cite[pg.~166]{GT14_2}.  
They are also examples of \emph{perfect cones} \cite[Theorem~6]{GT14_2}. A necessary and sufficient condition for a regular cone $\stdCone$ to be perfect is that $\beta(\stdCone) \geq \dim \stdCone$ \cite[Theorem~1]{OG16}.
From Corollary~\ref{col:lya}, for $n \geq 4$ and  $k=1,\ldots,n-3$, 
 the $k$-th derivative relaxations of $\Re^n_+$ and $\PSDcone{n}$ are also not perfect. This is yet another way in which the automorphism group of derivative relaxations can be much poorer than that of the original cone.


\section{Some open questions}\label{sec:conc}
In this paper, we proved basic results for ROG hyperbolicity cones and provided a formula that relates the automorphism group of a regular ROG cone with the automorphism group of its derivative relaxations.  We then applied the results to compute the automorphisms of derivative relaxations of $\Re^n_+$ and $\PSDcone{n}$.
We conclude this work with a few open questions.
\begin{itemize}
	\item \emph{Are homogeneous cones ROG?} G\"uler showed in \cite{Gu97} that homogeneous cones are indeed hyperbolicity cones. 
	However, it is not clear whether they are also ROG. 
	\item \emph{For which class of hyperbolicity cones does the formula in Theorem~\ref{theo:hyper_aut_2} hold?} 
	It might be interesting to examine whether Theorem~\ref{theo:hyper_aut_2} (or, more modestly, Theorem~\ref{theo:hyper_aut}) can be extended beyond ROG hyperbolicity cones. Example~\ref{ex:l1} already points some of the difficulties in this task. The $\ell_1$-cone is, in a sense, the next best thing after a ROG cone, since all the extreme rays still have the same rank. Nevertheless, Theorem~\ref{theo:hyper_aut} does not hold for it. 
	One of the difficulties is that, in general, even if $p$ is a minimal polynomial for $\Lambda_+ = \Lambda_+(p,e)$, it is not necessarily the case that $D_{e}p$ is a minimal polynomial for $\Lambda_+^{(1)}$. 
	When this happens, the automorphism group of $\Lambda_+^{(1)}$ might enlarge, instead of shrink. 
\end{itemize}
With regard to the second question, we observe that the proof of the first half of Theorem~\ref{theo:hyper_aut_2} leads to the following partial result.
\begin{proposition}\label{prop:partial}
Let $\Lambda_+=\Lambda_+(p,e)$ be a hyperbolicity cone with $d:=\deg p$ and $\dim \Lambda_+ \geq 3$. Let $k$ satisfy $1 \leq k \leq d-2$ and suppose that $p$ and $D_e^kp$ are minimal polynomials for $\Lambda_+$ and $\Lambda_+^{(k)}$, respectively. Then
\[
\Aut(\Lambda_+^{(k)}) \supseteq \{A \in \Aut(\Lambda_+) \mid A(\Re_+e)=\Re_+e \}.
\]
\end{proposition}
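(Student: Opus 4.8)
The plan is to observe that Proposition~\ref{prop:partial} is exactly the ``$\supseteq$'' half of Theorem~\ref{theo:hyper_aut_2}, and that the argument for that half never used the ROG hypothesis nor the degree restrictions $d\geq 4$, $1\leq k\leq d-3$: it used only the minimality of $p$ for $\Lambda_+$, the minimality of $D_e^kp$ for $\Lambda_+^{(k)}$, and the membership $e\in\reInt\Lambda_+^{(k)}$. Under the hypotheses of Proposition~\ref{prop:partial} the first two are assumed outright (rather than obtained from Proposition~\ref{prop:strict} via ROGness), and the third is automatic, so the proof goes through verbatim.

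Concretely, I would take $A\in\Aut(\Lambda_+)$ with $A(\Re_+e)=\Re_+e$, so that $Ae=\alpha e$ for some $\alpha>0$. Since $p$ is a minimal polynomial for $\Lambda_+$, Proposition~\ref{prop:aut} yields $\kappa>0$ with $p=\kappa\,(p\circ A)$. Writing $D_e^kp(x)=\nabla^kp(x)[e^k]$ and differentiating both sides $k$ times in the direction $e$, the identity $Ae=\alpha e$ gives
\[
D_e^kp(x)=\kappa\,\nabla^kp(Ax)[(Ae)^k]=\kappa\alpha^k\,\nabla^kp(Ax)[e^k]=\kappa\alpha^k\,\bigl((D_e^kp)\circ A\bigr)(x),
\]
so $D_e^kp$ and $(D_e^kp)\circ A$ coincide up to the positive constant $\kappa\alpha^k$. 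To finish, I would verify $Ae\in\reInt\Lambda_+^{(k)}$: since $\Lambda_+=\Lambda_+^{(0)}\subseteq\Lambda_+^{(k)}$ and $\Lambda_+$ is full-dimensional, $\Lambda_+^{(k)}$ is full-dimensional as well and $e\in\interior\Lambda_+\subseteq\interior\Lambda_+^{(k)}=\reInt\Lambda_+^{(k)}$, whence $Ae=\alpha e\in\reInt\Lambda_+^{(k)}$ too. Then, because $D_e^kp$ is minimal for $\Lambda_+^{(k)}$, the converse direction of Proposition~\ref{prop:aut} applies and gives $A\in\Aut(\Lambda_+^{(k)})$.

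I do not expect a genuine obstacle here, since the point of the proposition is precisely that the forward inclusion is robust and isolates which hypotheses are really needed. The one step that must be phrased carefully is the final invocation of Proposition~\ref{prop:aut}: it is the minimality of $D_e^kp$ for $\Lambda_+^{(k)}$ that licenses passing from ``$D_e^kp$ and $(D_e^kp)\circ A$ generate the same cone'' to ``they differ by a positive constant'', and hence to $A$ being an automorphism; without minimality, Proposition~\ref{prop:hv} would only guarantee a polynomial factor, and the characterization of Proposition~\ref{prop:aut} would not be available. This is exactly the failure mode illustrated by the $\ell_1$-cone in Example~\ref{ex:l1}, which is why the minimality of $D_e^kp$ appears as an explicit hypothesis rather than a consequence.
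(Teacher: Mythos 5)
Your proof is correct and is essentially the paper's own argument: the paper proves Proposition~\ref{prop:partial} by observing that the ``$\supseteq$'' half of Theorem~\ref{theo:hyper_aut_2} only uses the minimality of $p$ and $D_e^kp$ together with Proposition~\ref{prop:aut}, which is exactly the computation you reproduce. Your explicit check that $Ae=\alpha e\in\reInt\Lambda_+^{(k)}$ and your remark on why minimality of $D_e^kp$ is the hypothesis doing the work are both consistent with the paper's (terser) proof.
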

\begin{proof}
It is the exact same proof of the ``$\supseteq$'' inclusion in Theorem~\ref{theo:hyper_aut_2}. 
This proof only depends on the minimality of $p$, $D_e^kp$ and uses Proposition~\ref{prop:aut}, which does not require that the cone be ROG nor regular.
\end{proof}
{
	\small{
		\section*{Acknowledgements}
We thank the referees for their  comments, which helped to improve the paper.
}}

\bibliographystyle{abbrvurl}
\bibliography{bib_plain}
\end{document}